\let\mathcal\mathscr
\numberwithin{equation}{section}
\newtheorem{theorem}{Theorem}[section]
\newtheorem{lemma}[theorem]{Lemma}
\newtheorem{corollary}[theorem]{Corollary}
\theoremstyle{definition}
\newtheorem*{ack}{Acknowledgements}
\newtheorem{definition}[theorem]{Definition}
\theoremstyle{remark}
\newtheorem{assumption}{Assumption}
\renewcommand{\d}{\mathrm{d}}
\renewcommand{\phi}{\varphi}
\newcommand{\0}{\mathbf{0}}
\newcommand{\PP}{\mathbb{P}}
\newcommand{\ZZ}{\mathbb{Z}}
\newcommand{\QQ}{\mathbb{Q}}
\newcommand{\RR}{\mathbb{R}}
\newcommand{\CC}{\mathbb{C}}
\newcommand{\cQ}{\mathcal{Q}}
\newcommand{\cW}{\mathcal{W}}
\newcommand{\XX}{\boldsymbol{X}}
\newcommand{\YY}{\boldsymbol{Y}}
\renewcommand{\leq}{\leqslant}
\renewcommand{\geq}{\geqslant}
\renewcommand{\bar}{\overline}
\newcommand{\ma}{\mathbf}
\newcommand{\m}{\mathbf{m}}
\newcommand{\x}{\mathbf{x}}
\newcommand{\y}{\mathbf{y}}
\renewcommand{\c}{\mathbf{c}}
\renewcommand{\v}{\mathbf{v}}
\renewcommand{\u}{\mathbf{u}}
\newcommand{\U}{\mathbf{U}}
\renewcommand{\b}{\mathbf{b}}
\renewcommand{\a}{\mathbf{a}}
\renewcommand{\k}{\mathbf{k}}
\newcommand{\h}{\mathbf{h}}
\renewcommand{\r}{\mathbf{r}}
\renewcommand{\t}{\mathbf{t}}
\newcommand{\w}{\mathbf{w}}
\newcommand{\fo}{\mathfrak{o}}
\newcommand{\fa}{\mathfrak{a}}
\newcommand{\fb}{\mathfrak{b}}
\newcommand{\fc}{\mathfrak{c}}
\newcommand{\fd}{\mathfrak{d}}
\newcommand{\fe}{\mathfrak{e}}
\newcommand{\fh}{\mathfrak{h}}
\newcommand{\fp}{\mathfrak{p}}
\newcommand{\fq}{\mathfrak{q}}
\newcommand{\ve}{\varepsilon}
\newcommand{\e}{\psi}
\newcommand{\bla}{\boldsymbol{\lambda}}
\newcommand{\bxi}{\boldsymbol{\xi}}
\DeclareMathOperator{\rank}{rank}
\DeclareMathOperator{\Gal}{Gal}
\DeclareMathOperator{\tr}{Tr}
\DeclareMathOperator{\nm}{Nm}
\DeclareMathOperator{\n}{N}
\newcommand{\cH}{\mathcal{H}}
\DeclareMathOperator{\diag}{Diag}
\newcommand{\gfb}{{{}^{{G}}\mathfrak{b}}}
\newcommand{\gfp}{{}^{{G}}\mathfrak{p}}
\newcommand{\gfq}{{}^{{G}}\mathfrak{q}}
\DeclareMathOperator{\Mod}{mod} 
\renewcommand{\bmod}[1]{\,(\Mod{#1})}
\newcommand{\No}{N_{W}(F,N;P)}
\newcommand{\sigstar}{\sideset{}{^*}\sum_{\sigma\bmod{\mathfrak{b}}}}
\newcommand{\ord}{\mathrm{ord}}
\newcommand{\starsum}{\sideset{}{^*}\sum}
\def\bfa{{\mathbf a}}
 \def\bfe{{\mathbf e}}
\def\bfh{{\mathbf h}}
\def\bfm{{\mathbf m}}
\def\bfu{{\mathbf u}}
\def\bfv{{\mathbf v}}
\def\bfx{{\mathbf x}}
\def\calH{{\mathcal H}}
\def\cH{{\mathcal H}}
\def\calO{{\mathfrak o}}
\def\calS{{\mathcal S}}
\newcommand{\modu}{\mbox{ mod }}
\def\calb{{\mathfrak{b}}}
\def\cald{{\mathfrak{d}}}
\def\Gb{{{}^G\mathfrak{b} }}
\def\cF{{\mathcal{F}}}
\def\alp{{\alpha}} 
\def\gam{{\gamma}} 
\def\Gam{{\Gamma}}
 \def\Del{{\Delta}}
\def\kap{{\kappa}}
\def\btau{{\boldsymbol \tau}}
\def\grb{{\mathfrak b}}
\def\grp{{\mathfrak p}}
\def\grp{{\mathfrak p}}
\def\R{{\mathbb R}}
\def\Z{{\mathbb Z}}
\def\Q{{\mathbb Q}}
\def\ome{{\omega}}
\DeclareMathOperator{\Tr}{Tr}
\title[Generalised quadratic forms]{Generalised quadratic forms over totally real number fields}
\author{Tim Browning}
\address{IST Austria\\
Am Campus 1\\
3400 Klosterneuburg\\
Austria}
\email{tdb@ist.ac.at}
\author{Lillian  B. Pierce}
\address{Department of Mathematics\\ 
Duke University\\ Durham NC 27708\\ USA}
\email{pierce@math.duke.edu}
\author{Damaris Schindler}
\address{G\"ottingen University\\
Bunsenstraße 3--5\\
37073 G\"ottingen\\
Germany}
\email{damaris.schindler@mathematik.uni-goettingen.de}
\subjclass[2010]{11P55 (11D09, 14G05)}
\date{\today}
\begin{document}

\begin{abstract}
We introduce a new class of generalised quadratic forms over totally real number fields,  
which is rich enough to  capture the arithmetic of arbitrary systems of quadrics over the rational numbers.  We explore this connection through a version of the  Hardy--Littlewood circle method over number fields. 
\end{abstract}

\maketitle

\setcounter{tocdepth}{1}
\tableofcontents

\section{Introduction}

The study of quadratic forms over number fields  is a rich and highly developed area of mathematics.
Let  $K$ be a number field of degree $d\geq 2$ over $\QQ$ 
and let 
$$
Q(X_1,\dots,X_n)=
\sum_{1\leq i,j\leq n} c_{i,j} X_i X_j
$$
be a non-singular quadratic form, with symmetric coefficients $c_{i,j}\in \fo_K$. 
For given $N\in \fo_K$, it is very natural to ask about the solubility of 
$$
Q(x_1,\dots,x_n)=N,
$$
with $x_1,\dots,x_n\in \fo_K$. 
If $n\geq 4$, a number field version of the Hardy--Littlewood circle method is capable of establishing the Hasse principle for  these equations. When $n\geq 5$ this follows from work of Skinner \cite{skinner2}, and for $n=4$ it is carried out by 
Helfrich in a 2015 PhD thesis   \cite{helfrich}. 

In this paper we shall introduce the  notion of a 
{\em generalised quadratic form} over  $K$ and ask about the Hasse principle in this new setting. 
We shall always assume that $K/\QQ$ is a Galois  extension 
of degree $d$ that is totally real. 
(Our methods can handle arbitrary number fields, but doing so causes extra notational complexity and gives no new insight into the arithmetic of generalised quadratic forms.)
We may now make the following definition.

\begin{definition}\label{def:gq}
Let $n\geq 2$. A  {\em generalised quadratic form} is given by 
$$
F(X_1,\dots,X_n)=
\sum_{1\leq i,j\leq n} \sum_{\tau,\tau'\in \Gal(K/\QQ)}c_{i,j,\tau,\tau'} X_i^{\tau} X_j^{\tau'},
$$
for  symmetric  coefficients $c_{i,j,\tau,\tau'}=c_{j,i,\tau',\tau}\in \fo_K$. 
\end{definition}

We will be interested in the set of $(x_1,\dots,x_n)\in \fo_K^n$ for which 
$$
F(x_1,\dots,x_n)=N,
$$ 
for given $N\in \fo_K$, in which case  $x_i^\tau$ should be interpreted as the conjugate of $x_i$ under $\tau\in \Gal(K/\QQ)$. 
 Definition \ref{def:gq} encompasses 
standard  integral quadratic forms over $\fo_K$ and forms defined using  norms and traces. 
For example,  let $\tr_{K/\QQ,H}:K\to K$ be the {\em partial trace}, defined
via
$\tr_{K/\QQ,H}(u)=\sum_{\tau\in H} u^\tau$, for any subset $H\subset \Gal(K/\QQ)$.
Then, a natural generalisation of the question about representing  elements of $\fo_K$ as a sum of squares is to ask about the existence of  $\x\in \fo_K^n$ such that
\begin{equation}\label{eq:partial_trace}
\Tr_{K/\QQ,H} (x_1^2)+\dots+\Tr_{K/\QQ,H} (x_n^2)=N,
\end{equation}
for given $N\in \fo_K$ and a given subset  $H\subset \Gal(K/\QQ)$.


The coefficients of a generalised quadratic form $F(X_1,\dots,X_n)$ form a 
$dn\times dn$ matrix  $\mathbf{M}=(c_{i,j,\tau,\tau'})_{(i,\tau)\times (j,\tau')}$. 
In the generic setting we might expect this matrix to have full rank, but there are many cases of interest where the rank is  much  smaller. For example,   standard 
 quadratic forms produce a  coefficient matrix $\mathbf{M}$, which after reordering rows and columns, contains  a  $n\times n$ block  matrix in the upper left corner  and has  zeros everywhere else. 
Our methods break down  in the completely generic situation, and so our  interest in this paper lies at the opposite end of the spectrum, in which the rank of $\mathbf{M}$ is not much bigger than $n$.

Let  $W:(K\otimes_\QQ \RR)^n\to \RR_{\geq 0}$
be a  smooth weight function, whose precise construction 
is deferred until \S \ref{s:circle}.
Our main results will comprise of asymptotic formulae for  sums of the shape
$$
 N_{W}(F,N;P) = 
 \sum_{\substack{\x\in \fo_K^n\\ F(\x)=N}} W(\x/P) ,
$$
as $P\rightarrow \infty$, for given $N\in \fo_K$ and suitable generalised quadratic forms $F$. 
When  $N=0$, we  shall simply 
write $N_{W}(F;P)=N_{W}(F,0;P)$.

\subsection{Homogeneous setting}

Of particular interest is the case $N=0$, which we now assume.  For standard quadratic forms $Q\in \fo_K[X_1,\dots,X_n]$, 
studying  non-trivial zeros of $Q$ over   $\fo_K$ is equivalent to studying 
 $K$-rational points on the smooth quadric 
 $X\subset \PP_K^{n-1}$ cut out by $Q=0$. This,  in turn, can be accessed via the 
 Weil restriction (or restriction of scalars). 
The Weil restriction $R_{K/\QQ} X$ is  an algebraic variety whose set of $\QQ$-points is canonically in bijection with the $K$-rational points of $X$. In the setting where $Q\in \fo_K[X_1,\dots,X_n]$
is a non-singular quadratic form, 
 the Weil restriction $R_{K/\QQ}X$ is a smooth complete intersection of $d$ quadrics in $\PP_\QQ^{dn-1}$, all of which are defined over $\QQ$.
However,  the set of complete intersections  that arise in this way is a 
 very limited subset of the family of all smooth codimension $d$ complete intersections of quadrics over $\QQ$ 
 in $\PP_\QQ^{dn-1}$.  Our first result shows that, 
 after Weil restriction, 
 the space of generalised quadratic forms  is rich enough to capture the arithmetic over $\QQ$ of  
arbitrary codimension $d$ complete  intersections of quadrics in $\PP_\QQ^{M-1}$, provided that $d\mid M$.  

Let $F(X_1,\dots,X_n)$ be a generalised quadratic form and let $\omega_1,\dots,\omega_d $ be a $\ZZ$-basis for $\fo_K$. Any element  $\x\in \fo_K^n$ can be written $\x=\omega_1\u_1+\cdots+\omega_d\u_d$ for $(\u_1,\dots,\u_d)\in \ZZ^{dn}$.  Taking the Weil restriction corresponds to writing down a set of 
quadratic forms $Q_1,\dots,Q_d\in \ZZ[\U_1,\dots,\U_d]$, in $dn$ variables, such that 
\begin{equation}\label{eq:F-to-Q}
F(X_1,\dots,X_n)=\sum_{1\leq i\leq d}{\omega_i}Q_i(\U_1,\dots,\U_d).
\end{equation}
We henceforth call $\{Q_1,\dots,Q_d\}$ the {\em descended system}.
We shall prove the following result in \S \ref{sec:descended}.

\begin{theorem}\label{t:systems}
Let $K/\QQ$ be  a Galois extension of degree $d$. Then there is a  bijection between the space  of generalised quadratic forms in $n$ variables over $K$ and systems of $d$ rational quadratic forms in $dn$ variables.
\end{theorem}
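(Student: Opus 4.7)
The plan is to show that the map $\Phi\colon F \mapsto (Q_1,\ldots,Q_d)$ defined by \eqref{eq:F-to-Q} is a $\QQ$-linear isomorphism, which I would do by matching dimensions and proving injectivity. Fixing an integral basis $\omega_1,\ldots,\omega_d$ of $\fo_K$, the symmetry condition $c_{i,j,\tau,\tau'}=c_{j,i,\tau',\tau}$ makes the coefficient array a symmetric $dn \times dn$ matrix over $K$, so generalised quadratic forms in $n$ variables form a $K$-vector space of dimension $\binom{dn+1}{2}$, hence a $\QQ$-vector space of dimension $d\binom{dn+1}{2}$; $d$-tuples of rational quadratic forms in $dn$ variables have the same $\QQ$-dimension, so it suffices to prove $\Phi$ is injective.

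Substituting $x_i=\sum_k \omega_k u_{i,k}$ into $F$ and using $x_i^\tau = \sum_k \omega_k^\tau u_{i,k}$ yields
$$F(\x(\u)) = \sum_{i,j,k,\ell}\Bigl(\sum_{\tau,\tau'} c_{i,j,\tau,\tau'}\, \omega_k^\tau \omega_\ell^{\tau'}\Bigr) u_{i,k}\, u_{j,\ell}.$$
Each bracketed coefficient lies in $\fo_K$ and has a unique expansion $\sum_m \omega_m B_{m;i,j,k,\ell}$ with $B_{m;i,j,k,\ell}\in\ZZ$. Collecting by $\omega_m$ produces the descended forms $Q_m(\u)=\sum_{i,j,k,\ell} B_{m;i,j,k,\ell}\, u_{i,k}u_{j,\ell}$, all having the required symmetry, and the resulting map $\Phi$ is manifestly $\QQ$-linear.

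For injectivity, I would regard $F$ as an ordinary symmetric $K$-bilinear form $\widetilde F(\mathbf{Y})$ in $dn$ independent variables $Y_{i,\tau}$, obtained by treating each symbol $X_i^\tau$ as its own formal unknown. The relation $Y_{i,\tau}=\sum_k \omega_k^\tau u_{i,k}$ is then a $K$-linear change of variables whose matrix is block diagonal, with $n$ identical $d\times d$ blocks equal to $\Omega:=(\omega_k^\tau)_{\tau,k}$. Since $\det(\Omega)^2 = \disc(K/\QQ) \neq 0$, this change of variables is invertible over $K$, so the vanishing of $\Phi(F)$ forces $F(\x(\u))=0$ as a polynomial in $\u$, then $\widetilde F(\mathbf{Y})=0$, and finally every $c_{i,j,\tau,\tau'}=0$.

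Surjectivity follows from the dimension match, or alternatively one may construct the inverse of $\Phi$ explicitly: given $(Q_1,\ldots,Q_d)$, form $G(\u):=\sum_m \omega_m Q_m(\u)$, apply the inverse substitution $\u=\Omega^{-1}\mathbf{Y}$ blockwise, and symmetrise to read off coefficients $c_{i,j,\tau,\tau'}\in K$. The only non-trivial ingredient is the invertibility of $\Omega$, i.e.\ the non-vanishing of $\disc(K/\QQ)$, which is immediate from separability of $K/\QQ$; everything else is bookkeeping and linear algebra.
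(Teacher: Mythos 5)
Your proposal is correct and follows essentially the same route as the paper: substitute the integral basis expansion, observe the resulting map on coefficient arrays is $\QQ$-linear between spaces of the same dimension $\tfrac12 dn(dn+1)d$, and prove injectivity using the invertibility of the matrix $(\omega_k^\tau)_{\tau,k}$ (whose square determinant is the discriminant). The only cosmetic difference is that you package the injectivity step as a single invertible block-diagonal change of variables on a formal quadratic form in $dn$ unknowns, whereas the paper extracts the descended coefficients via the trace-dual basis and applies the maximal-rank argument to $(\omega_l^\tau)$ twice.
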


The   question of $\fo_K$-solubility for a generalised quadratic form therefore   becomes a question of $\ZZ$-solubility for the descended system and vice versa. It presents an intriguing challenge to 
gain insight into smooth 
codimension $d$ complete intersections of quadrics in 
$\PP_\QQ^{M-1}$ over $\QQ$ by  working with generalised quadratic forms.
It follows from work of Birch \cite{birch} that the usual Hardy--Littlewood asymptotic formula holds for 
systems of quadrics over $\QQ$, 
provided that $M>B+2d(d+1)$,
where $B$ is the affine dimension of the ``Birch singular locus'' of the descended system. (Note that one can take 
$B\leq d-1$ when the descended system is a smooth complete intersection.)
Breakthrough work of 
Rydin Myerson \cite{myerson} handles smooth codimension $d$ complete intersections of quadrics in $\PP_\QQ^{M-1}$ when $M\geq 9d$. 
The latter result is particularly significant, since it allows one to handle 
arbitrary generalised quadratic forms over $K$ in $n\geq 9$ variables, provided that the descended system defines a  smooth complete intersection of codimension $d$.

Our main results will concern a special class of generalised quadratic forms, 
in which only one non-trivial automorphism appears, and in which  the conjugated  variables separate completely  from the unconjugated variables.   These examples are chosen  to 
 represent a first step on the way to a fuller understanding of generalised quadratic forms, 
and yet exhibit enough features that   make them untreatable by other methods. 
In the light of  Theorem \ref{t:systems}, 
a complete understanding of  generalised quadratic forms must lie rather  deep. 

Let $Q\in \fo_K[X_1,\dots,X_n]$ and $R\in \fo_K[X_1,\dots,X_m]$ be  quadratic forms in $n$ and $m$ variables, respectively, for $1\leq m\leq n$.
The generalised quadratic forms we shall treat take the shape
\begin{equation}\label{eq:special-0}
F(X_1,\dots,X_n)=Q(X_1, \dots, X_n) +R(X_1^{\tau},\dots, X_m^{\tau}),
\end{equation}
for a fixed non-trivial automorphism $\tau\in \Gal(K/\QQ)$.
Let $\rho_1,\dots,\rho_d$ be the $d$ distinct embeddings of $K$ into $\RR$, where we recall that $K$ is totally real.  For each $1\leq l\leq d$, we define $l_\tau$ through the relation
\begin{equation}\label{eq:l_tau}
\rho_{l_\tau}\tau=\rho_l.
\end{equation}
Suppose that $\mathbf{A}$ is the $n\times n$ symmetric matrix defining $Q$ and that  $\mathbf{B}$ is the $n\times n$ symmetric matrix given by the condition that its upper left $m\times m$ submatrix defines $R$, with  all other entries equal to $0$. 
For any  $1\leq l\leq d$, we shall  write $\mathbf{A}^{(l)}$ and $\mathbf{B}^{(l)}$ for the $l$-th embeddings of $\mathbf{A}$ and $\mathbf{B}$, respectively. 
We make the following key  hypotheses about $\mathbf{A}$ and $\mathbf{B}$.

\begin{assumption}\label{ass1}
Assume that the descended system 
$$Q_1(\U_1,\dots,\U_d)=\dots= Q_1(\U_1,\dots,\U_d)=0$$ 
has codimension $d$ in $\PP^{dn-1}$.
Furthermore, 
assume that  $\det \mathbf{A}\neq 0$ and that  the upper left  $m\times m$  submatrix of $\mathbf{B}$ is non-singular. 
\end{assumption}

Our first result deals with the special case $m=1$.

\begin{theorem}\label{t:non-diag-1}
Let $K/\QQ$ be a totally real Galois extension of degree $d\geq 2$.
Suppose that  $m=1$ and that 
Assumption \ref{ass1} holds. Assume that 
$\det (\mathbf{A}^{(l)}+t\mathbf{B}^{(l_\tau)})$ is a constant polynomial in $t$, for each $1\leq l\leq d$, where $l_\tau$ is defined via
\eqref{eq:l_tau}.
Let  $n\geq 6$ and assume that 
 the descended system has non-singular points everywhere locally. 
Then there exist constants $c>0$ and  $\Delta>0$ such that 
$$
 N_{W}(F;P) = cP^{(n-2)d} +O(P^{(n-2)d-\Delta}).
$$
\end{theorem}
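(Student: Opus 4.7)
The plan is to execute a Hardy--Littlewood circle method directly over the totally real field $K$, with the pencil hypothesis on $\mathbf A^{(l)}+t\mathbf B^{(l_\tau)}$ entering decisively at the Weyl-differencing stage. By Theorem~\ref{t:systems}, the count $N_W(F;P)$ is equivalent to a smooth weighted count of integer zeros of a descended system of $d$ rational quadrics in $dn$ variables, but it is cleaner to work intrinsically over $K$.

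I would introduce the exponential sum
$$
T(\alpha)=\sum_{\x\in\fo_K^n}W(\x/P)\,e\!\left(\Tr_{K\otimes\RR/\RR}\bigl(\alpha F(\x)\bigr)\right),
$$
integrating $\alpha$ over a fundamental domain for $(K\otimes_\QQ\RR)/\fo_K^{\vee}$, so that orthogonality gives $N_W(F;P)=\int T(\alpha)\,\d\alpha$. I would then dissect the fundamental domain into major arcs $\mathfrak M$ centred at rationals of $K$ with small denominator ideal, together with a complementary minor arc set $\mathfrak m$. On $\mathfrak M$, a standard factorisation of $T(\alpha)$ near $\xi/\fb$ as a complete sum modulo $\fb$ times a smooth archimedean oscillatory integral produces, after summation and integration, the main term $\mathfrak S\,J_\infty P^{(n-2)d}$; the exponent $(n-2)d$ reflects $dn$ total variables minus $2d$ from the $d$ descended quadratic conditions. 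Positivity of the singular series $\mathfrak S$ and the singular integral $J_\infty$ follows from the everywhere-local non-singular solubility hypothesis, via Hensel lifting at finite places and the implicit function theorem at infinity.

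The core of the proof is the minor arc bound. Weyl-squaring with $\y=\x+\h$ replaces the quadratic phase by one that is linear in $\x$ with coefficients depending on $\h$. At each archimedean place the coefficient vector is governed by the matrix pencil $\mathbf A^{(l)}+t\mathbf B^{(l_\tau)}$: the assumption that $\det(\mathbf A^{(l)}+t\mathbf B^{(l_\tau)})$ is constant in $t$, combined with $m=1$ (so $\mathbf B^{(l_\tau)}$ is rank one) and $\det\mathbf A\neq 0$, is equivalent to the vanishing of the $(1,1)$-cofactor of each $\mathbf A^{(l)}$, and hence to the pencil being non-degenerate for every $t\in\RR$. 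This confines the $\h$ for which the inner sum in $\x$ fails to exhibit cancellation to a subvariety of substantially larger codimension than in the generic setting, yielding a Weyl-type bound of the shape $\sup_{\alpha\in\mathfrak m}|T(\alpha)|\ll P^{dn/2-\Delta'}$ for some $\Delta'>0$, valid once $n\ge 6$.

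The main obstacle is this minor arc estimate: pushing the variable threshold down to $n\ge 6$, below what either Birch's general bound or the recent work of Rydin Myerson provides, requires a careful stratification of $\h$ according to the rank of the associated family of linear forms across the $d$ embeddings, together with a coordinated use of the pencil condition at each archimedean place. Once this estimate is in hand, combining it with the major arc analysis and integrating over $\mathfrak m$ yields the claimed asymptotic $cP^{(n-2)d}+O(P^{(n-2)d-\Delta})$.
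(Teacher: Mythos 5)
Your outline correctly identifies the pencil hypothesis as the decisive structural input, but the proof has a genuine gap at exactly the point you flag as ``the main obstacle'': the minor arc bound is asserted, not established, and the route you propose for it (Weyl differencing) cannot deliver $n\geq 6$. After differencing, the locus of bad $\h$ is controlled by the rank of the real Hessian of $\sum_{l}u_lF^{(l)}$ over \emph{all} directions $u=(u_1,\dots,u_d)$. For the form \eqref{eq:special-0} this Hessian is block diagonal with $l$-th block $u_l\mathbf{A}^{(l)}+u_{l_\tau}\mathbf{B}^{(l_\tau)}$, and taking $u$ supported on a single coordinate $l_0$ produces one block of rank $n$, one of rank $m=1$, and $d-2$ zero blocks: the rank collapses to $n+1$ out of $dn$, \emph{regardless} of the constant-determinant hypothesis. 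A pointwise Weyl/Birch bound in such a direction saves only $P^{(n+1)/2}$ against a required saving of $P^{2d}$, so one would need roughly $n>4d-1$, far worse than $n\geq 6$ uniformly in $d$ (this is precisely why the paper cites Birch and Rydin Myerson as insufficient). Salvaging this requires showing that the degenerate directions occupy a small-measure set and integrating the resulting bounds, which is not a pointwise minor-arc estimate at all.

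The paper takes a different route that is built around this difficulty. It uses the Browning--Vishe $\delta$-method and applies Poisson summation --- note that for a generalised quadratic form one cannot even break into residue classes modulo $\fb$; one must pass to the $G$-invariant ideal $\gfb$ of \eqref{eq:Gb}, a point your major-arc sketch also overlooks. The pencil condition then enters on the \emph{archimedean} side: the oscillatory integral $K(u,P\m)$ factorises over the $d$ real embeddings, each factor decays like $\prod_i\min(1,|\eth_{l,i}(u)|^{-1/2})$ in terms of the eigenvalues of $u_l\mathbf{A}^{(l)}+u_{l_\tau}\mathbf{B}^{(l_\tau)}$, and the constant-determinant hypothesis (equivalently, as you note, vanishing of the relevant cofactor) forces \emph{all} $n$ eigenvalues to be $\gg|u_l|$ when $|u_{l_\tau}|\ll|u_l|$ (Lemma \ref{size_EVs}). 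Combined with a lattice-point count for the dual variables $\m$ and square-root cancellation in the complete exponential sums $S_\fb(N;\m)$, integration over $u$ absorbs the degenerate directions and yields \eqref{eq:end} for $n\geq 6$. Without an argument of this kind your minor-arc claim does not follow, so the proof as proposed is incomplete.
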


The implied constants in our work are always allowed to depend on $K$ and $F$.
The generalised quadratic form
$
2X_1X_2 + a (X_1^{\tau})^2 + \tilde{Q}(X_3,\ldots, X_n)
$
meets the hypotheses of the theorem, for example, where   $\tilde Q\in \fo_K[X_3,\ldots, X_n]$ 
is a non-singular quadratic form
and $a\in \fo_K$ is non-zero.

We are also able to prove an asymptotic formula for  $N_{W}(F;P)$ for arbitrary $m\geq 1$, provided we make additional assumptions about the matrices $\mathbf{A}$ and $\mathbf{B}$.

\begin{assumption}\label{ass2}
For all $1\leq l\leq d$, 
assume that 
$
\rank (\mathbf{A}^{(l)}+t\mathbf{B}^{(l_\tau)})\geq n-1$, for all $t\in \mathbb{R}$,
where $l_\tau$ is defined via \eqref{eq:l_tau}.
\end{assumption}

\begin{assumption}\label{ass3}
For all $1\leq l\leq d$, 
assume that  $\det (\mathbf{A}^{(l)}+t\mathbf{B}^{(l_\tau)})$ has degree at least $m-1$,
viewed as a  polynomial in $t$. 
\end{assumption}

When $m=1$ and 
$\det (\mathbf{A}^{(l)}+t\mathbf{B}^{(l_\tau)})$ has degree exactly  $0$ in Assumption \ref{ass3}, we see that 
Assumption \ref{ass2} is implied by Assumption \ref{ass1}, since then 
$\rank (\mathbf{A}^{(l)}+t\mathbf{B}^{(l_\tau)})=
\rank (\mathbf{A}^{(l)})=n$.  For general $m\geq 1$, 
Assumption \ref{ass2} is similar to one that is commonly made in the study of pairs of quadratic forms. Indeed, suppose one is given  two matrices $A,B\in M_{n\times n}(L)$ over an algebraically closed field $L$ of characteristic not equal to $2$, with associated quadratic forms $Q_A$ and $Q_B$. It follows from 
Reid's thesis \cite[Prop.~2.1]{reid} that 
the rank of any element in the pencil $\lambda A+\mu B$, with $(\lambda,\mu)\neq (0,0)$, is never smaller than $n-1$, provided the intersection  $Q_A=Q_B=0$
is non-singular as a projective variety, and of the expected dimension. In our situation, by contrast,  we only look at the pencil $\mathbf{A}^{(l)}+t\mathbf{B}^{(l_\tau)}$, since  the matrix $\mathbf{B}^{(l_\tau)}$  has rank $m$
by construction. (We shall relate this situation to the properties of an appropriate singular locus in Lemma \ref{lem:reid} below.)

We are now ready to reveal our main  result in the homogeneous setting.

\begin{theorem}\label{t:non-diag}
Let $K/\QQ$ be a totally real Galois extension of degree $d\geq 2$.
Suppose that Assumptions \ref{ass1}--\ref{ass3} hold, and that
$
n> 
3m+4-4m/d.
$
Assume  that 
 the descended system has non-singular points everywhere locally. 
Then there exist constants $c>0$ and  $\Delta>0$ such that 
$$
 N_{W}(F;P) = cP^{(n-2)d} +O(P^{(n-2)d-\Delta}).
$$
\end{theorem}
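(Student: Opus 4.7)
The plan is to apply the Hardy--Littlewood circle method over $K$, in the same framework developed to prove Theorem~\ref{t:non-diag-1}. One writes
$$
N_W(F;P) = \int_{\mathfrak{F}} S(\boldsymbol{\alpha}) \, d\boldsymbol{\alpha},
\qquad
S(\boldsymbol{\alpha}) = \sum_{\x \in \fo_K^n} W(\x/P) \, e\!\left(\tr_{K/\QQ}(\boldsymbol{\alpha} F(\x))\right),
$$
with $\boldsymbol{\alpha}$ ranging over a fundamental domain $\mathfrak{F}$ in $K \otimes_\QQ \RR$, normalised by the inverse different of $K$. A Farey-style dissection partitions $\mathfrak{F}$ into major arcs $\mathfrak{M}$, comprising rational approximations $\boldsymbol{\alpha} \approx \boldsymbol{a}/q$ with $q$ small, and the complementary minor arcs $\mathfrak{m}$.

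\textbf{Major arcs.} On $\mathfrak{M}$, standard manipulations factor $S(\boldsymbol{\alpha})$ as a product of a complete Gauss-type sum over $(\fo_K/q)^n$ and an oscillatory integral in the local variable $\boldsymbol{\beta} = \boldsymbol{\alpha} - \boldsymbol{a}/q$. The hypothesis that the descended system has non-singular points everywhere locally yields absolute convergence and positivity of the singular series, while the real solubility provided by Assumption~\ref{ass1} (and in particular the non-vanishing of $\det \mathbf{A}$) gives positivity of the singular integral. Combined, these produce the leading term $cP^{(n-2)d}$ with an explicit positive constant $c$.

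\textbf{Minor arcs --- the main difficulty.} This is where Assumptions~\ref{ass2} and~\ref{ass3} enter decisively. Weyl differencing expresses $|S(\boldsymbol{\alpha})|^2$ as a sum over pairs $(\x,\y)$ of exponentials $e\!\left(\tr\!\left(\boldsymbol{\alpha}(F(\x+\y)-F(\x)-F(\y))\right)\right)$, whose exponent is bilinear in $(\x,\y)$; at the $l$-th archimedean embedding, its coefficient vector in $\x$ is governed by a matrix of the form $\mathbf{A}^{(l)} + t_l \mathbf{B}^{(l_\tau)}$, where $t_l$ encodes the $l_\tau$-component of $\boldsymbol{\alpha}$ up to normalisation. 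Assumption~\ref{ass2} forces this matrix to have rank at least $n-1$ for every real $t_l$, so that summation over $\x$ generically saves $n-1$ archimedean components at each embedding. Assumption~\ref{ass3}, together with Lemma~\ref{lem:reid}, controls the codimension of the subvariety of $\y$ for which the rank genuinely drops to $n-1$ at some embedding. Counting integer $\y$ of height at most $P$ lying in this bad subvariety, and balancing this against the measure of $\mathfrak{m}$ in a repeated differencing argument, yields a power-saving bound on the minor-arc contribution precisely under the hypothesis $n > 3m + 4 - 4m/d$.

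\textbf{Main obstacle.} The hardest step is exactly this minor-arc estimate. In Theorem~\ref{t:non-diag-1} one has $m=1$, so $\mathbf{B}^{(l_\tau)}$ has rank one and the pencil $\mathbf{A}^{(l)} + t\mathbf{B}^{(l_\tau)}$ is generically of full rank $n$; the rank-dropping locus is harmless. For $m \geq 2$ this is no longer the case: one must exploit simultaneously the uniform rank bound from Assumption~\ref{ass2} and the codimension bound from Assumption~\ref{ass3}, and do so \emph{coherently across all} $d$ archimedean embeddings, which are coupled by the permutation $l \mapsto l_\tau$ arising from \eqref{eq:l_tau}. The bookkeeping of how the Weyl-differencing savings distribute among the $d$ places, versus the potential loss from the bad locus at each place, is what forces the sharper threshold $n > 3m + 4 - 4m/d$ and constitutes the principal technical work.
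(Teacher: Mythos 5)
Your proposal follows a genuinely different route from the paper: you propose a classical Farey dissection of $K\otimes_\QQ\RR$ with Weyl differencing on the minor arcs, whereas the paper uses the Browning--Vishe smooth $\delta$-function expansion \eqref{eq:BV} followed by Poisson summation modulo the $G$-invariant ideal $\gfb$, reducing everything to complete exponential sums $S_\fb(N;\m)$ and oscillatory integrals $I_\fb(N/P^2;P\m)$ whose archimedean factors are controlled by the eigenvalues of the pencils $u_l\mathbf{A}^{(l)}+u_{l_\tau}\mathbf{B}^{(l_\tau)}$. The comparison aside, there are two concrete gaps in your sketch.

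First, the major-arc step as you state it fails. For a generalised quadratic form one does \emph{not} have $F(\a+q\h)\equiv F(\a)\bmod{q}$ for $\h\in\fo^n$, because the conjugates $h_i^{\tau}$ need not lie in $q\fo$; so $S(\boldsymbol{\alpha})$ does not factor into a complete Gauss sum over $(\fo_K/q)^n$ times an oscillatory integral. This is exactly the obstruction the paper isolates in \S\ref{s:poisson}: one must instead break into residue classes modulo the larger ideal $\gfb=\bigcap_{\tau\in G}\fb^{\tau^{-1}}$, and the module $\mathcal{H}_\fb$ of \eqref{eq:define-H'} (whose index in $\fo^n$ is estimated in Lemma \ref{lem:discriminant} and, for the forms \eqref{eq:special-0}, in Lemma \ref{lem:1}) then governs both the size of the complete sums and the shape of the singular series. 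Any correct execution of your plan must incorporate this, and it changes the arithmetic normalisations throughout: the relevant "modulus" appears with two scales, $\n\fb\asymp X$ and $\n\gfb\asymp Y$ with $X\leq Y\leq X^2$, and the final exponent count depends on both.

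Second, the minor-arc bound is asserted rather than derived, and the assertion that the bookkeeping "yields a power-saving bound precisely under $n>3m+4-4m/d$" is not substantiated. In the paper this threshold emerges from Lemma \ref{size_EVs}: when $|u_{l_\tau}|>C_l|u_l|$ the product of the $n-m$ smallest eigenvalues of the pencil is only $\gg |u_l|^{n-\tilde m}/|u_{l_\tau}|^{m-\tilde m}$ (this is where Assumption \ref{ass3} with $\tilde m\geq m-1$ is actually used, not merely to control a rank-dropping locus of $\y$), and this degenerate regime, balanced against the lattice-point count of Lemma \ref{lem:wish} and integrated over the two-parameter dyadic ranges in $X$ and $Y$ (Lemmas \ref{lem:E1} and \ref{lem:E2}), is what produces the term $-4m/d$. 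Your sketch never engages with the regime where the $\mathbf{B}$-part of the pencil dominates, nor with the $X$ versus $Y$ discrepancy coming from $\gfb$, so there is no reason the Weyl-differencing balance would land on the stated threshold; indeed, a Birch-style differencing treatment of the descended system is noted in the introduction to require far more variables.
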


On taking $m=1$, we note that this result subsumes Theorem 
\ref{t:non-diag-1} when $n\geq 7$.
If one makes further assumptions on $Q$ one can do even better. Suppose, for example, that 
the last $n-m$ variables split off from $Q$, so that 
$$
Q(X_1,\dots,X_n)=Q_1(X_1,\dots,X_m)+Q_2(X_{m+1},\dots,X_n),
$$ 
for quadratic forms $Q_1$ and $Q_2$ over $\fo_K$.
Then it seems likely that a classical version of the circle method can be employed.
On summing trivially over the first $m$-variables of the associated  exponential sums, one would be left with handling an exponential sum in $n-m$ variables involving  $Q_2.$ If $Q_2$ has rank at least $5$, then Skinner's treatment over number fields \cite{skinner2} 
would yield the necessary saving. 
This  ought to allow $n\geq m+5$ in the statement of  Theorem \ref{t:non-diag} if 
$Q(0,\dots,0,X_{m+1},\dots,X_n)$ has rank at least $5$.

\subsection{Inhomogeneous setting}

We now assume that $N\in \fo_K$ is non-zero. Then we may write $N=\omega_1 N_1+\cdots+\omega_dN_d$, where $N_1,\dots,N_d\in \ZZ$ are not all zero. 
We shall 
 henceforth call $\{Q_1-N_1,\dots,Q_d-N_d\}$ the {\em shifted descended system},
where  $Q_1,\dots,Q_d$ are obtained from $F$ via \eqref{eq:F-to-Q}, continuing to call 
$\{Q_1,\dots,Q_d\}$ the associated  {\em descended system}.

Our next result demonstrates that sharper results are available if $N\neq 0$ and  $Q, R$ are both diagonal.
Suppose that 
\begin{equation}\label{eq:special}
F(X_1,\dots,X_n)=a_1X_1^2+\cdots+a_nX_n^2+\sum_{i=1}^m b_i (X_i^{\tau})^2,
\end{equation}
for $1\leq m \leq n$ and non-zero $a_1,\dots,a_n,b_1,\dots,b_m\in \fo_K$, and 
where $\tau\in \Gal(K/\QQ)$ is a fixed non-trivial automorphism.
Taking $m=n$ and $a_i=b_i=1$ for $1\leq i\leq n$,  we are led to an instance of the partial trace problem in  \eqref{eq:partial_trace} with $H=\{\mathrm{id}, \tau\}$.
We will prove the following result.

\begin{theorem}\label{t:diag}
Let $K/\QQ$ be a totally real Galois extension of degree $d\geq 2$.
Assume that $N\in \fo_K$ is non-zero and that 
$
n\geq m+4.
$
Suppose that 
 the  descended system
 has codimension $d$ and a non-singular real point 
and that  the shifted descended system has non-singular points 
 over $\ZZ_p$ for every prime $p$.
Then there exist constants $c>0$ and  $\Delta>0$ such that 
$$
 N_{W}(F,N;P) = cP^{(n-2)d} +O(P^{(n-2)d-\Delta}).
$$
\end{theorem}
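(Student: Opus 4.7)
The plan is to deploy a number field version of the Hardy--Littlewood circle method that exploits the diagonal structure of $F$ in \eqref{eq:special}. Setting things up in the style of Skinner \cite{skinner2} and Helfrich \cite{helfrich}, I would write
$$
N_W(F,N;P)=\int_{\mathfrak{F}} S(\al)\,\psi_\al(-N)\,d\al,\qquad S(\al)=\sum_{\x\in \fo_K^n}W(\x/P)\,\psi_\al(F(\x)),
$$
with $\mathfrak{F}$ a fundamental domain in $K\otimes_\QQ\RR$ for an appropriate dual lattice and $\psi_\al(\beta)=\exp(2\pi i \tr_{K/\QQ}(\al\beta))$. The decisive feature of \eqref{eq:special} is that it splits as $F(\x)=\sum_{i=1}^n F_i(x_i)$ with $F_i(x)=a_i x^2+b_i(x^\tau)^2$ for $i\leq m$ and $F_i(x)=a_i x^2$ for $i>m$. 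This forces the factorisation
$$
S(\al)=\prod_{i=1}^n T_i(\al),\qquad T_i(\al)=\sum_{x\in \fo_K}w_i(x/P)\,\psi_\al(F_i(x)),
$$
which I would regroup as $S(\al)=S_G(\al)\cdot S_H(\al)$ with $S_G=\prod_{i\leq m}T_i$ and $S_H=\prod_{i>m}T_i$. The point is that $S_H$ is precisely the exponential sum for the honest diagonal $\fo_K$-quadratic form $H(\x)=\sum_{i=m+1}^n a_i x_i^2$ in $n-m\geq 4$ variables, a completely classical object.

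After a standard Farey-type dissection of $\mathfrak{F}$ into major arcs $\mathfrak{M}$ and minor arcs $\mathfrak{m}$, the crude pointwise bound $|S_G(\al)|\ll P^{dm}$ reduces the minor arc estimate to
$$
\int_{\mathfrak{m}}|S(\al)|\,d\al\ll P^{dm}\int_{\mathfrak{m}}|S_H(\al)|\,d\al.
$$
Since $H$ is non-singular in $n-m\geq 4$ variables over $\fo_K$, the Kloosterman-refined circle method over $K$ developed by Helfrich \cite{helfrich} (in the four-variable case) and by Skinner \cite{skinner2} (for $\geq 5$ variables) supplies $\int_{\mathfrak{m}}|S_H(\al)|\,d\al\ll P^{d(n-m-2)-\Delta'}$ for some $\Delta'>0$. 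Combined, this yields $\int_{\mathfrak{m}}|S(\al)|\,d\al\ll P^{d(n-2)-\Delta'}$, which is exactly what is needed.

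On the major arcs, the product structure is again indispensable. Near a rational approximant $\al\equiv \gra/\grq$, each $T_i(\al)$ can be approximated by Poisson summation as a local Gauss sum over $\fo_K/\grq$ times an archimedean oscillatory integral. Reassembling these local approximations and summing $\gra/\grq$ over the centres of $\mathfrak{M}$ produces
$$
\int_{\mathfrak{M}}S(\al)\,\psi_\al(-N)\,d\al=c\,P^{d(n-2)}+O\!\left(P^{d(n-2)-\Delta''}\right)
$$
for some $\Delta''>0$, with $c=\mathfrak{S}(N)\cdot J_\infty$ factoring as a singular series over prime ideals of $\fo_K$ times a singular integral at infinity. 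The local solubility hypotheses of the theorem---that the shifted descended system has non-singular $\ZZ_p$-points for every $p$ and that the descended system has a non-singular real point---feed in the standard way into the positivity of $\mathfrak{S}(N)$ and $J_\infty$ respectively, so that $c>0$.

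The chief obstacle will be the minor arc estimate for $S_H$ in the borderline case $n-m=4$: here one is at the very edge of what the circle method can deliver, and the full strength of Helfrich's Kloosterman-refined delta method over a number field is required. The major arc analysis, though lengthy, becomes largely mechanical once the factorisation $S=\prod_i T_i$ and the resulting clean separation of local densities are in hand.
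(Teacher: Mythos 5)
Your framework (a Farey dissection over $K$, the factorisation $S(\al)=\prod_i T_i(\al)$ coming from the diagonal shape of \eqref{eq:special}, and the separation $S=S_G\cdot S_H$ with $S_H$ the exponential sum of a standard quadratic form in $n-m$ variables) is a legitimate alternative to the delta-method setup used in the paper, and the major-arc sketch is broadly plausible --- apart from the technicality that for $i\leq m$ the summand $\psi_\al(a_ix^2+b_i(x^\tau)^2)$ is periodic in $x$ only modulo the larger ideal $\fq\cap\fq^{\tau^{-1}}$ rather than modulo $\fq$, so the local Gauss sums and densities must be normalised over $\fo/{}^{G}\fq$; this is exactly the role of the $G$-invariant ideal in \S\ref{s:dual} and it changes the convergence analysis of the singular series.

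The genuine gap is the minor-arc estimate. The bound you invoke,
$$
\int_{\mathfrak{m}}|S_H(\al)|\,\d\al\ll P^{d(n-m-2)-\Delta'},
$$
is false in the borderline case $n-m=4$, which the theorem permits. On the arcs with denominator norm in a dyadic range $[T,2T]$, $T\leq P^d$, each of the four factors of $S_H$ contributes roughly $P^{d}(\n\fq)^{-1/2}(1+P^2|\beta|)^{-1/2}$; integrating over $\beta$ and summing over the $\asymp\n\fq$ numerators and the denominators of norm $\asymp T$ gives a contribution $\asymp P^{d(n-m-2)}$ from \emph{every} dyadic block, so no power saving survives the passage to absolute values. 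This is precisely the classical obstruction that forced Kloosterman's refinement for quaternary forms: one must retain the twist $\psi(-\gamma aN)$ and exploit cancellation in the sum over numerators $a$ modulo $\fq$, which yields Kloosterman/Sali\'e sums and a saving of $(\n\fq)^{1/2}$ --- available only because $N\neq 0$. Your reduction $|S(\al)|\leq P^{dm}|S_H(\al)|$ followed by $\int_{\mathfrak{m}}|S_H|$ discards exactly this cancellation, so Helfrich's ``Kloosterman-refined'' method cannot be invoked at that point: the refinement lives in the coherent $a$-sum, not in a pointwise bound for $|S_H(\al)|$. The paper's proof keeps the complete sum over primitive characters modulo $\fb$ intact and evaluates it as a Kloosterman or Sali\'e sum (Lemma \ref{lem:prime}), which is where the decisive extra $(\n\fp)^{1/2}$ comes from. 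Without reproducing that step your argument covers at best $n-m\geq 5$, not the stated $n\geq m+4$.
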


The implied constant in this result is allowed to depend on $N$, in addition to  $K$ and $F$.
In order to illustrate our result, take the quadratic number field $K=\QQ(\sqrt{2})$ in 
\eqref{eq:special} and assume that $a_1,\dots,a_n,b_1,\dots,b_m\in \ZZ$ are all non-zero.  Then 
it follows from Theorem \ref{t:diag} that our work treats the shifted descended system
\begin{align*}
\sum_{i=1}^m (a_i+b_i)u_i^2+2\sum_{i=1}^m (a_i+b_i) v_i^2 +\sum_{i=m+1}^na_i (u_i^2+2v_i^2)
&=N_1,\\
2\sum_{i=1}^m (a_i-b_i)u_iv_i+ 2\sum_{i=m+1}^na_i u_iv_i
&=N_2,
\end{align*}
when $n\geq m+4$ 
and 
 $N_1,N_2\in \ZZ$ are not both zero.

\subsection{Some words on the proof}

Let $F(X_1,\dots,X_n)$ be a generalised quadratic form defined over 
  $\fo_K$ and let $N\in \fo_K$. 
Our analysis of $\No$  relies on a   Fourier-analytic 
interpretation  of 
the indicator function
\begin{equation}\label{eq:deltaK}
\delta_K(\alpha) =
\begin{cases}
1,  & \mbox{if $\alpha = 0$}, \\
0,  & \mbox{if $\alpha\in \fo_K\setminus \{0\}$}.
\end{cases}
\end{equation}
Browning and Vishe \cite[Thm~1.2]{BV} have extended
to arbitrary number fields the smooth $\delta$-function technology of Duke--Friedlander--Iwaniec \cite{DFI}, as later refined by  Heath-Brown \cite{HB}. This will underpin the work in this paper, 
affording us the opportunity to extract non-trivial savings, in the spirit of Kloosterman's method, in the proof of Theorem \ref{t:diag}.  We will be led to  an expression for $N_W(F,N;P)$ in \eqref{eq:No1}, involving an infinite sum over non-zero integral ideals $\fb$. The next stage is to apply Poisson summation, but an obstacle arises from the fact that 
it is no longer possible to break into residue classes modulo $\fb$ for generalised quadratic forms $F$. 
 Instead we shall break into residue classes modulo a larger ideal $\gfb$, which is the least common multiple of the ideals $\fb^{\tau^{-1}}$, as $\tau$ ranges over the automorphisms that actually  occur in $F$.
Poisson summation then leads to the analysis of certain {\em exponential sums} 
$S_\fb(N;\m)$ and {\em oscillatory integrals} $I_\fb(N;\m)$, which are indexed by $\fb\subset \fo_K$ and suitable 
 vectors $\m\in K^n$.
While the treatment of 
$S_\fb(N;\m)$ is relatively standard, the main challenge is to understand 
$I_\fb(N;\m)$. When $F$ is a standard quadratic form, these integrals factorise into 
a product of $d$ oscillatory integrals,
one for each of the $d$ real embeddings of $K$. This reduces the problem to looking at oscillatory integrals 
over $\RR^n$.
For generic generalised quadratic forms, it seems very difficult to obtain the kind of  cancellation one needs for the method to go through, for the relevant  oscillatory integrals over $\RR^{dn}$.

We now summarise the contents of the paper. In \S \ref{sec:descended} we shall 
prove  Theorem~\ref{t:systems} by 
spelling out the connection between generalised quadratic forms over $K$ and descended systems over $\QQ$. 
In \S \ref{s:ant} we collect together some useful facts from algebraic number theory. 
The rest of the paper will be concerned with estimating the size of the counting function $\No$, 
as $P\to \infty$. In order to facilitate future investigation we shall present most of the arguments for arbitrary generalised quadratic forms in  \S \ref{s:circle}. Next, in \S \ref{s:hom} we shall specialise to the case \eqref{eq:special-0} and $N=0$, in order to 
deduce Theorems \ref{t:non-diag-1} and~\ref{t:non-diag}.  Finally, \S \ref{s:inhom} will deal with Theorem \ref{t:diag}, which pertains to the diagonal generalised quadratic form \eqref{eq:special} and $N\neq 0$.

\begin{ack}
The authors are grateful to Jayce Getz for asking questions that set this project in motion. 
T.B.  was
supported by FWF grant P~32428-N35 and by a grant from the Institute
for Advanced Study School of Mathematics.  L.B.P. was partially supported by NSF DMS-2200470
and DMS-1652173, and thanks the Hausdorff Centre for Mathematics for hosting research visits. 
\end{ack}

\section{Generalised quadratic forms and the descended system}\label{sec:descended}

In this section we shall prove Theorem \ref{t:systems}, by making explicit the correspondence between generalised quadratic forms $F$ and 
the descended system of $d$ quadratic forms over $\Q$ in $dn$ variables. Let $K/\QQ$ be a degree $d$ Galois number field, which (in this section only) need not be totally real.  
Assume that we are given a set of coefficients $(c_{i,j,\tau,\tau'})$ of a generalised quadratic form, 
with $c_{i,j,\tau,\tau'}=c_{j,i,\tau',\tau}$ for all $1\leq i,j\leq n$ and $\tau,\tau'\in \Gal(K/\QQ)$. We can write each coefficient $c_{i,j,\tau,\tau'}\in K$ with respect to the basis $\{\ome_1,\ldots, \ome_d\}$ as 
$
c_{i,j,\tau,\tau'}=\sum_{k=1}^d c_{i,j,\tau,\tau'}^{(k)}\ome_k.
$
We proceed  to compute the descended system explicitly, by writing 
$
X_i=\sum_{k=1}^d U_{k,i}\ome_k,
$
for $1\leq i\leq n$.
Then 
$$
F(X_1,\dots, X_n)= \sum_{1\leq i,j\leq n} \sum_{\tau,\tau'\in \Gal(K/\QQ)} \sum_{1\leq l,m,k\leq d} 
c_{i,j,\tau,\tau'}^{(k)} \ome_k U_{l,i}\ome_l^\tau U_{m,j}\ome_m^{\tau'}.
$$
Let $\{\rho_1,\ldots, \rho_d\}$ be a dual basis of $\{\ome_1,\ldots, \ome_d\}$ with respect to the trace,
so that 
$(
\tr_{K/\QQ}(\rho_i\omega_j))_{i,j}$ is the identity matrix
and any $\alpha\in K$ can be written in the form
$\alpha=\sum_{p=1}^d \Tr_{K/\QQ}(\alpha \rho_p) \omega_p$.
Thus  $F(X_1,\ldots, X_n)$ is equal to
$$
 \sum_{p=1}^d \ome_p \sum_{1\leq i,j\leq n}\sum_{1\leq l, m\leq d} U_{l,i}U_{m,j} \Tr_{K/\QQ}\left(\rho_p \sum_{1\leq k\leq d}\sum_{\tau,\tau'\in \Gal(K/\QQ)} c_{i,j,\tau,\tau'}^{(k)} \ome_k \ome_l^\tau\ome_m^{\tau'}\right)
$$
and we arrive at our descended system (\ref{eq:F-to-Q}),
with 
$$
Q_p(\underline\U)= \sum_{1\leq i,j\leq n}\sum_{1\leq l, m\leq d} \beta_{p,l,i,m,j}U_{l,i}U_{m,j} ,
$$
for  rational coefficients
\begin{align*}
\beta_{p,l,i,m,j}
&= \Tr_{K/\QQ}\left(\rho_p \sum_{1\leq k\leq d}\sum_{\tau,\tau'\in \Gal(K/\QQ)} c_{i,j,\tau,\tau'}^{(k)} 
\ome_k \ome_l^\tau\ome_m^{\tau'}\right)\\
&=
\sum_{1\leq k\leq d}\sum_{\tau,\tau'\in \Gal(K/\QQ)} c_{i,j,\tau,\tau'}^{(k)} \Tr_{K/\QQ}(\rho_p\ome_k\ome_l^\tau\ome_m^{\tau'}).
\end{align*}
By construction, the coefficients $\beta_{p,l,i,m,j}$  satisfy  $\beta_{p,l,i,m,j}=\beta_{p,m,j,l,i}$,
 for all $1\leq p,l,m\leq d$ and $1\leq i,j\leq n$. Moreover, they depend linearly on the given set of coefficients $(c_{i,j,\tau,\tau'}^{(k)})$. Now the space of all tuples $(c_{i,j,\tau,\tau'}^{(k)})$ of rational numbers satisfying the symmetry relation $c_{i,j,\tau,\tau'}^{(k)}=c_{j,i,\tau',\tau}^{(k)}$ can be parametrised by $\Q^{\frac{1}{2}dn(dn+1)d}$. Similarly, the space of all symmetric rational tuples $(\beta_{p,l,i,m,j})$ is naturally parametrised by $\Q^{\frac{1}{2}dn(dn+1)d}$. 
We  define the map
 $$
 \Phi:\Q^{\frac{1}{2}dn(dn+1)d}\rightarrow\Q^{\frac{1}{2}dn(dn+1)d}, \quad 
(c_{i,j,\tau,\tau'}^{(k)}) \mapsto (\beta_{p,l,i,m,j}).
$$
We claim that this  map is an injective linear map. This implies that there is a bijection between generalised quadratic forms in $n$ variables and systems of $d$ rational quadratic forms in $nd$ variables, as claimed in Theorem \ref{t:systems}.

To check the claim we 
assume that $\beta_{p,l,i,m,j}=0$ for all $1\leq p,l,m\leq d$ and $1\leq i,j\leq n$. By the non-degeneracy of the trace as a bilinear form, we deduce that
$$\sum_{\tau,\tau'\in \Gal(K/\QQ)} c_{i,j,\tau,\tau'} \ome_l^\tau\ome_m^{\tau'}=0, \quad 1\leq i,j\leq n,\ 1\leq l,m\leq d.$$
Note that the matrix $(\ome_l^\tau)_{\substack{1\leq l\leq d\\ \tau\in \Gal(K/\QQ)}}$ is of maximal rank, and hence we obtain
$$ \sum_{\tau'\in \Gal(K/\QQ)} c_{i,j,\tau,\tau'}\ome_m^{\tau'}= 0,\quad 1\leq i,j\leq n,\ \tau \in \Gal(K/\QQ),\ 1\leq m\leq d.$$
Applying the same argument again, we finally obtain
$$c_{i,j,\tau,\tau'}=0,\quad 1\leq i,j\leq n,\ \tau,\tau'\in \Gal(K/\QQ),$$
and hence $c_{i,j,\tau,\tau'}^{(k)}=0$ for all $1\leq k\leq d$.

\section{Recap from algebraic number theory } \label{s:ant}

In this section we collect together some of the facts about algebraic number fields that are important in our work. As usual $K/\QQ$ is a totally real Galois extension of degree $d$.  We shall henceforth write $\fo=\fo_K$ for its ring of integers.   In \S \ref{s:ideals} and 
\S \ref{s:prim} we recall some facts about ideals and discuss 
the construction of primitive characters modulo ideals, respectively. 
The need to deal with  generalised quadratic forms 
naturally leads to two  basic  objects that can be associated to a given integral ideal $\fb$ in $K$,
both of which depend on the particular generalised quadratic form we are working with and will be introduced in  \S \ref{s:dual}. 

\subsection{Properties of ideals}\label{s:ideals}

For any fractional 
ideal $\mathfrak{a}$ in $K$ one defines the dual ideal
$$
\hat \fa = \{\alpha\in K: \mbox{$\tr_{K/\QQ}(\alpha x)\in  \ZZ$ for all $x\in \mathfrak{a}$}\}.
$$
In particular 
$\hat \fa= \fa^{-1}\fd^{-1}$,  where 
$
\fd = 
\{\alpha\in K: \alpha\hat \fo\subseteq \fo\}
$ 
denotes the different ideal of $K$ and is itself an integral ideal. One notes that $\hat \fo=\fd^{-1}$. Furthermore,
we have $\hat \fa\subseteq \hat \fb$ if and only if $\fb\subseteq \fa$.
An additional integral ideal featuring in our work is the denominator ideal
$$
\fa_\gamma=\{ \alpha\in \fo: \alpha \gamma\in \fo\},
$$
associated to   any $\gamma\in K$.
Recall that    $\n \fa=|\fo/\fa|$ is the   ideal norm of any integral ideal
$\fa$.   
One important property of the ideal norm is that 
$\n\fa^{\tau}=\n \fa$ for any $\tau\in \Gal(K/\QQ).$ (This follows from the isomorphism $\fo/\fa\to\fo/\fa^\tau$ given by $\alpha\mapsto \alpha^\tau$.)  Furthermore, we have $\n\fa\in \fa$ for any integral ideal $\fa$.

We will write $(\fa,\fb)=\fa+\fb$ for the greatest common divisor of two integral ideals $\fa,\fb\subset \fo$.
When these ideals are coprime, meaning  that $\fa+\fb=\fo$,  we shall adopt the abuse of notation $(\fa,\fb)=1$. 
We close this section by recording the following basic  result.

\begin{lemma}\label{lem:alg1}
Let $\ve>0$ and let $\fb,\fc$ be integral ideals.
Then 
\begin{enumerate}
\item[(i)]
there exists $\alpha\in \fb$ such that $\ord_{\fp}(\alpha)=\ord_\fp(\fb)$ for every prime ideal
$\fp\mid \fc$;
\item[(ii)]
there exists $\alpha\in \fb$ and an unramified prime ideal $\fp$ coprime to
$\fb^\tau$ for all $\tau\in \Gal(K/\QQ)$, 
with $\n\fp \ll (\n \fb)^{\ve}$, 
 such that $(\alpha)=\fb\fp$.
\end{enumerate}
\end{lemma}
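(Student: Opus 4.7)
For part (i), the plan is to invoke the Chinese Remainder Theorem on a modulus tailored to force exact valuations at the primes of $\fc$. Writing $e_\fp=\ord_\fp(\fb)$ for each prime $\fp$, set
$$
\mathfrak{M}=\prod_{\fp\mid\fc}\fp^{e_\fp+1}\cdot \prod_{\fp\mid \fb,\ \fp\nmid\fc}\fp^{e_\fp}.
$$
Via the CRT decomposition $\fo/\mathfrak{M}\cong\prod_{\fp\mid\mathfrak{M}}\fo/\fp^{\ord_\fp(\mathfrak{M})}$, I would prescribe $\alpha\equiv \pi_\fp^{e_\fp}\pmod{\fp^{e_\fp+1}}$ at each $\fp\mid \fc$ (with $\pi_\fp$ a local uniformizer), and $\alpha\equiv 0\pmod{\fp^{e_\fp}}$ at each $\fp\mid\fb$ with $\fp\nmid\fc$. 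Any lift $\alpha\in\fo$ then satisfies $\ord_\fp(\alpha)\ge e_\fp$ at every $\fp\mid\fb$, so $\alpha\in\fb$, and $\ord_\fp(\alpha)=e_\fp=\ord_\fp(\fb)$ at every $\fp\mid\fc$, which is what we want. The case $e_\fp=0$ at primes $\fp\mid\fc$ with $\fp\nmid\fb$ is included uniformly, as the prescription simply forces $\alpha$ to be a unit at $\fp$.

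For part (ii), I would first recast the problem in the ideal class group $\mathrm{Cl}(K)$: the condition that $\fp\fb$ be principal — in which case any generator $\alpha$ automatically lies in $\fb$ — is equivalent to $[\fp]=[\fb]^{-1}$ in $\mathrm{Cl}(K)$. Thus it suffices to produce an unramified prime $\fp$ of $\fo$ lying in the class $[\fb]^{-1}$, coprime to every Galois conjugate $\fb^{\tau}$, and of norm $\n\fp\ll (\n\fb)^{\varepsilon}$.

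For this the plan is to invoke an effective prime ideal theorem in ideal classes: for each class $\mathcal{C}\in \mathrm{Cl}(K)$,
$$
\#\bigl\{\fp\ \text{prime}: \n\fp\leq X,\ [\fp]=\mathcal{C}\bigr\}\gg_K \frac{X}{\log X}, \qquad X\ge X_0(K),
$$
which follows from standard analytic properties (in particular a zero-free region) of the Hecke $L$-functions twisted by class group characters. The primes to exclude are the bounded set of ramified primes together with the prime divisors of $\prod_{\tau\in \Gal(K/\QQ)}\fb^{\tau}$; since $\omega(\fb^{\tau})=\omega(\fb)$ for every $\tau$, this excluded set has cardinality $O(\log \n\fb)$. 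Taking $X$ of size a large constant times $(\n\fb)^{\varepsilon}$, the displayed lower bound dominates the excluded count whenever $\n\fb$ is sufficiently large, and the bounded regime is absorbed into the implied constant. The main obstacle, and the only genuinely non-trivial input, is precisely this effective lower bound for primes in a given ideal class; the rest is bookkeeping.
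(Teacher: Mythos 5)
Your argument is correct and matches the proof the paper relies on: the paper simply cites \cite[Lemma~2.2]{BV} (with the ``obvious modification'' of additionally excluding the ramified primes and the divisors of all conjugates $\fb^\tau$), and the underlying proof there is exactly your CRT prescription of local valuations for (i) and the Landau prime ideal theorem in ideal classes for (ii). No gaps; the only inputs you flag (a uniformizer at each $\fp$, and the effective count $\gg X/\log X$ of primes of norm at most $X$ in a fixed ideal class versus the $O(\log\n\fb)$ excluded primes) are precisely the ones used there.
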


\begin{proof}
Part (i) is \cite[Lemma~2.2(i)]{BV} and part (ii) follows from an obvious modification to the proof of 
\cite[Lemma~2.2(ii)]{BV}.
\end{proof}

We shall also require a version of the Chinese remainder theorem,  as in \cite[Lemma~3]{skinner}.

\begin{lemma}\label{lem:skinner-3}
Suppose that $\fa,\fa_1,\fa_2$ are integral ideals such that $\fa=\fa_1\fa_2$,
with $\fa_1$ and $\fa_2$ coprime.
Let  $\alpha_1, \alpha_2\in \fo$ satisfy $\ord_\fp(\alpha_1)=\ord_\fp(\fa_1)$ 
and 
$\ord_\fp(\alpha_2)=\ord_\fp(\fa_2)$,
for all $\fp\mid \fa$. Then 
$$
\fo/\fa=\{\alpha_1 \mu +\alpha_2 \beta:  \beta\in \fo/\fa_1, ~\mu\in \fo/\fa_2\}.
$$
\end{lemma}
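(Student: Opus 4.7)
The plan is to reduce the statement to the classical Chinese Remainder Theorem applied to the coprime ideals $\fa_1$ and $\fa_2$, once the roles of $\alpha_1$ and $\alpha_2$ have been clarified. First I would unpack the hypothesis on the valuations. For every prime $\fp\mid \fa_1$, the assumption gives $\ord_\fp(\alpha_1)=\ord_\fp(\fa_1)\geq 1$, so $\alpha_1\in \fa_1$; while for every prime $\fp\mid \fa_2$, coprimality of $\fa_1$ and $\fa_2$ forces $\ord_\fp(\fa_1)=0$, so $\alpha_1\notin \fp$, which combined over all such $\fp$ shows that $\alpha_1$ is coprime to $\fa_2$ and hence a unit in $\fo/\fa_2$. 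Symmetrically, $\alpha_2\in \fa_2$ and $\alpha_2$ is a unit in $\fo/\fa_1$.

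Next I would invoke the standard Chinese Remainder Theorem for the coprime integral ideals $\fa_1, \fa_2$, which supplies the ring isomorphism
$$
\fo/\fa \;\longrightarrow\; \fo/\fa_1 \,\times\, \fo/\fa_2,\qquad
[x]\mapsto ([x]_{\fa_1},[x]_{\fa_2}).
$$
Given an arbitrary $x\in \fo$, set $x_1=[x]_{\fa_1}$ and $x_2=[x]_{\fa_2}$. Using the invertibility established in the previous paragraph, choose $\beta\in\fo$ reducing to $\alpha_2^{-1}x_1$ in $\fo/\fa_1$ and $\mu\in \fo$ reducing to $\alpha_1^{-1}x_2$ in $\fo/\fa_2$. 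Since $\alpha_1\in\fa_1$ and $\alpha_2\in\fa_2$, the element $y=\alpha_1\mu+\alpha_2\beta$ satisfies
$$
y\equiv \alpha_2\beta \equiv x_1 \pmod{\fa_1}, \qquad
y\equiv \alpha_1\mu \equiv x_2 \pmod{\fa_2}.
$$
Applying the CRT isomorphism in reverse, this forces $y\equiv x\pmod{\fa}$, which yields the containment $\fo/\fa\subseteq \{\alpha_1\mu+\alpha_2\beta\}$. The reverse containment is immediate, completing the proof.

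There is no real obstacle here: the lemma is essentially an explicit presentation of the classical CRT in which the two components are singled out by the representatives $\alpha_2$ (killed modulo $\fa_2$, invertible modulo $\fa_1$) and $\alpha_1$ (killed modulo $\fa_1$, invertible modulo $\fa_2$). The only subtlety lies in verifying the bookkeeping of valuations at primes dividing $\fa$, which is handled in the first paragraph; the hypothesis that $\ord_\fp(\alpha_i)=\ord_\fp(\fa_i)$ \emph{exactly} at primes $\fp\mid \fa$ is precisely what is needed for both the vanishing and the invertibility to hold at the right primes.
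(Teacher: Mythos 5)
Your proof is correct, and it is the standard argument: the paper itself gives no proof of this lemma (it is quoted from Skinner's Lemma~3), and the intended justification is exactly the Chinese Remainder Theorem argument you give, using that $\alpha_i\in\fa_i$ and $\alpha_i$ is a unit modulo $\fa_{3-i}$. The only cosmetic remark is that your construction only uses $\ord_\fp(\alpha_i)\geq\ord_\fp(\fa_i)$ at primes dividing $\fa_i$ together with $\ord_\fp(\alpha_i)=0$ at primes dividing $\fa_{3-i}$, so the exact equality in the hypothesis is slightly stronger than what the argument needs.
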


\subsection{Construction of primitive characters} \label{s:prim}

Let  $\psi(\cdot)=\exp(2\pi i \tr_{K/\QQ}(\cdot))$ 
be a character
on $K$.
The  following result gives a way to construct primitive characters 
$\fo/\fb\to \CC$.

\begin{lemma}\label{lem:denominator}
Let 
$\sigma_0(\cdot)=\psi(\gamma \cdot) :K\to \CC$,  for any 
$\gamma\in K$, 
and let $\fb\subsetneq \fo$ be an integral ideal. Then 
$\sigma_0$ is a non-trivial primitive   additive character modulo $\fb$ if and only if $\fa_\gamma=\fb\fe$ for some $\fe\mid \fd$ such that $(\fd/\fe,\fb)=1$.
\end{lemma}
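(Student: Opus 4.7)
The plan is to translate both conditions in the lemma---primitivity of $\sigma_0$ modulo $\fb$ on one side, and the ideal identity $\fa_\gamma=\fb\fe$ with the stated properties on the other---into statements about $\ord_\fp(\gamma)$ at each prime $\fp$ of $K$, and to observe that the two resulting sets of local conditions coincide.

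First I would unpack the character-theoretic conditions. Since $\psi(\gamma x)=\exp(2\pi i\,\tr_{K/\QQ}(\gamma x))$, the map $\sigma_0$ descends to an additive character on $\fo/\fb$ exactly when $\tr_{K/\QQ}(\gamma\fb)\subseteq\ZZ$, i.e., $\gamma\in\hat\fb=\fb^{-1}\fd^{-1}$, by the dual-ideal formula recalled in \S\ref{s:ideals}. Primitivity modulo $\fb$ is equivalent to $\sigma_0$ failing to descend to $\fo/(\fb\fp^{-1})$ for every prime $\fp\mid\fb$, which unpacks to $\gamma\notin\widehat{\fb\fp^{-1}}=\fp\fb^{-1}\fd^{-1}$ for each such $\fp$. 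Reading these requirements valuation by valuation, primitivity modulo $\fb$ becomes
$$
\ord_\fp(\gamma)=-\ord_\fp(\fb\fd)\ \text{ for every }\fp\mid\fb,\qquad \ord_\fp(\gamma)\geq-\ord_\fp(\fd)\ \text{ for every }\fp\nmid\fb.
$$
Since $\fb\subsetneq\fo$ forces at least one prime divisor of $\fb$, the first condition automatically produces a negative valuation of $\gamma$ and hence makes $\sigma_0$ non-trivial, so the non-triviality hypothesis is redundant.

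Next I would read off $\fa_\gamma$ from these valuations. The identity $\ord_\fp(\fa_\gamma)=\max\{0,-\ord_\fp(\gamma)\}$ combined with the primitivity conditions gives $\ord_\fp(\fa_\gamma)=\ord_\fp(\fb\fd)$ when $\fp\mid\fb$ and $\ord_\fp(\fa_\gamma)\leq\ord_\fp(\fd)$ otherwise. Defining $\fe$ prime-by-prime by $\ord_\fp(\fe)=\ord_\fp(\fd)$ for $\fp\mid\fb$ and $\ord_\fp(\fe)=\ord_\fp(\fa_\gamma)$ for $\fp\nmid\fb$, one checks at once that $\fe$ is an integral ideal dividing $\fd$, that $\fa_\gamma=\fb\fe$, and that $\ord_\fp(\fd/\fe)=0$ at every $\fp\mid\fb$, which is exactly $(\fd/\fe,\fb)=1$. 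This establishes the forward implication.

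For the converse I would run the same analysis in reverse. Starting from $\fa_\gamma=\fb\fe$ with $\fe\mid\fd$ and $(\fd/\fe,\fb)=1$, the coprimality pins $\ord_\fp(\fe)=\ord_\fp(\fd)$ for each $\fp\mid\fb$, which recovers $\ord_\fp(\gamma)=-\ord_\fp(\fb\fd)$ (with $\ord_\fp(\gamma)<0$ since $\ord_\fp(\fb)\geq 1$), while the divisibility $\fe\mid\fd$ yields $\ord_\fp(\gamma)\geq-\ord_\fp(\fd)$ at the remaining primes. These are precisely the valuation conditions equivalent to primitivity, closing the equivalence. I do not expect any real obstacle: the argument is careful local bookkeeping in which the different $\fd$ absorbs the discrepancy between ``denominators in $\fo$'' and ``denominators in $\hat\fo$'', and the only mildly subtle point is keeping track of the extra contribution to $\fa_\gamma$ at primes dividing both $\fb$ and $\fd$.
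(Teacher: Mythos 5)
Your proof is correct, and it takes a somewhat different route from the paper's at the key step. The first reduction is the same: both you and the paper observe that $\sigma_0$ is an additive character modulo $\fb$ precisely when $\gamma\fb\fd\subseteq\fo$, i.e.\ $\fb\fd\subseteq\fa_\gamma$. For primitivity, however, the paper stays at the level of ideal divisibility: it writes $\fb\fd=\fa_\gamma\fh$ and characterises primitivity as $\fh\mid\fd$ with $(\fh,\fb)=1$ (your $\fe$ is $\fd/\fh$), proving both directions by contradiction arguments about common prime factors of $\fh$ and $\fb$. You instead reduce primitivity to non-descent modulo the maximal proper divisors $\fb\fp^{-1}$ and convert everything into valuation conditions $\ord_\fp(\gamma)=-\ord_\fp(\fb\fd)$ for $\fp\mid\fb$ and $\ord_\fp(\gamma)\geq-\ord_\fp(\fd)$ for $\fp\nmid\fb$, after which both implications are immediate bookkeeping with $\ord_\fp(\fa_\gamma)=\max\{0,-\ord_\fp(\gamma)\}$. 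Your version is arguably more transparent and avoids the case analysis by contradiction. One small point to tighten: in dismissing non-triviality you say the negative valuation of $\gamma$ at a prime $\fp\mid\fb$ makes $\sigma_0$ non-trivial, but triviality means $\gamma\in\hat\fo=\fd^{-1}$, so a merely negative valuation would not suffice at a ramified prime; what you actually need, and what your computation does give, is $\ord_\fp(\gamma)=-\ord_\fp(\fb)-\ord_\fp(\fd)<-\ord_\fp(\fd)$.
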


\begin{proof}
We begin by showing that  
$\sigma_0$ is an additive character modulo $\fb$ if and only if $\fb\fd\subset \fa_\gamma$. 
But $\sigma_0$ is an additive character modulo $\fb$ if and only if 
$\sigma_0(x+z)=\sigma_0(x)$  for all $x\in \fo$ and $z\in \fb$.
But this happens if and only if $\gamma z \in \hat \fo$ for all $z\in \fb$, which is if and only if $\fb\fd\subset \fa_\gamma$. This establishes the claim. 

Now suppose that $\fb\fd\subset \fa_\gamma$, which means that $\fa_\gamma\mid \fb\fd$. Thus there is an integral ideal $\fh$ such that $\fb\fd=\fa_\gamma \fh	$. We wish to show that $\sigma_0$ is primitive if and only if $\fh\mid \fd$ with $(\fh,\fb)=1$. To do so we note that $\sigma_0$ is primitive if and only if $\fa_\gamma \nmid \fb_1 \fd$ for all $\fb_1\mid \fb$ with $\fb_1\neq \fb$. Indeed,  if  $\fa_\gamma \mid \fb_1 \fd$ for some proper divisor $\fb_1\mid \fb$, then   
$\gamma z \in \hat \fo$ for every $z\in \fb_1$, which would mean that $\sigma_0$ is a character modulo $\fb_1$.
Suppose that $\sigma_0$ is primitive and suppose that there is a prime ideal $\fp\mid \fh$ such that $\fp\mid \fb$. Writing 
$\fh'=\fh\fp^{-1}$ and $\fb'=\fb\fp^{-1}$ it follows that 
$\fb'\fd=\fa_\gamma \fh'	$, whence $\fa_\gamma\mid \fb'\fd$, which is a contradiction. 
Thus $\fh$ is coprime to $\fb$ and we must have $\fh\mid \fd$.
Suppose now that   $\fb\fd=\fa_\gamma \fh$ for some  $\fh\mid \fd$ such that  $(\fh,\fb)=1$. 
We wish to deduce that $\sigma_0$ is primitive, for which we 
 suppose for a contradiction that there exists a proper divisor $\fb_1\mid \fb$ such that 
 $\fa_\gamma \mid \fb_1 \fd$. Writing $\fb=\fb_1\fb_2$ and recalling that $\fb\fd=\fa_\gamma \fh$, we deduce that $\fh = (\fa_\gamma^{-1}\fb_1 \fd)\fb_2$, whence $\fb_2\mid \fh$, which is impossible since $(\fh,\fb)=1$.

Finally we  note that 
$\sigma_0$ is a trivial character if and only if $\gamma\in \hat \fo$, which is equivalent to 
 $\fa_\gamma\supseteq  \fd$. This is clearly impossible for any primitive character $\sigma_0$ modulo a proper ideal
 $\fb\subsetneq \fo$, since $(\fd/\fe,\fb)=1$  if $\fa_\gamma=\fb\fe$.
 \end{proof}

We proceed to define a  particularly convenient   additive character modulo $\fb$. 
Associated to any non-zero integral ideal  $\fb$ is the subset $\mathfrak{F}(\fb)\subset K$ given by 
$$
\mathfrak{F}(\fb)=
\hspace{-0.05cm}
\left\{\frac{g}{\alpha}\in K: 
\begin{array}{l}
\text{
$\exists$ ~prime ideal $\fp_1$  with 
$\n \fp_1 \ll \n \fb$ s.t.}\\
\qquad\text{(i)}. \quad 
(\alpha)=\fb\fd\fp_1\\
\qquad\text{(ii)}. \quad 
\text{$g\in \fp_1\cap\ZZ$  with $((g),\fb^\tau\fd)=1$ $\forall~ \tau \in \Gal(K/\QQ)$}\\
\qquad\text{(iii)}. \quad 
\text{$\exists$  $\fe\mid \fd$ s.t. $(\fd/\fe,\fb)=1$ and 
$\fa_{g/\alpha}=\fb\fe$}
\end{array}
\hspace{-0.2cm}
\right\}.
$$
Note that  condition (i) implies that $\alpha\in \fb\fd$ and condition 
(ii) implies that $\fp_1\nmid \fb^\tau\fd$ for any $\tau\in \Gal(K/\QQ)$.
We may now record a variant of \cite[Lemma~2.3]{BV}, which shows that 
$\mathfrak{F}(\fb)\neq \emptyset$ for any choice of $\fb$.

\begin{lemma}\label{lem:orthogonal}
Let $\fb\subsetneq \fo$ be a non-zero ideal. Then there exists $\gamma\in \mathfrak{F}(\fb)$
such that $\e(\gamma\cdot)$ defines a non-trivial primitive additive character modulo $\fb$. 
\end{lemma}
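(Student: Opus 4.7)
The plan is to build $\gamma\in\mathfrak{F}(\fb)$ explicitly in the form $\gamma=g/\alpha$ with $\alpha$ furnished by Lemma \ref{lem:alg1}(ii) and $g$ a rational prime, then invoke Lemma \ref{lem:denominator} to conclude that $\e(\gamma\cdot)$ is non-trivial and primitive modulo $\fb$.

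First, apply Lemma \ref{lem:alg1}(ii) to the integral ideal $\fb\fd$ (in place of $\fb$) to obtain $\alpha\in\fb\fd$ and an unramified prime ideal $\fp_1$ with $\n\fp_1\ll(\n(\fb\fd))^{\ve}\ll(\n\fb)^{\ve}\leq\n\fb$ (absorbing the contribution of $\fd$, which depends only on $K$, into the implied constant) such that $(\alpha)=\fb\fd\fp_1$ and $\fp_1$ is coprime to $(\fb\fd)^\tau$ for every $\tau\in\Gal(K/\QQ)$. This delivers condition (i) in the definition of $\mathfrak{F}(\fb)$. Next, let $p$ be the rational prime lying below $\fp_1$, so that $\fp_1\cap\ZZ=p\ZZ$, and set $g=p$. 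Because $\fp_1$ is unramified and $K/\QQ$ is Galois, $(p)=\prod_{\fq\mid p}\fq$, with the primes $\fq$ above $p$ being precisely the distinct Galois conjugates of $\fp_1$. The crucial algebraic fact is that the different $\fd$ is Galois invariant, hence $(\fb\fd)^{\sigma^{-1}}=\fb^{\sigma^{-1}}\fd$ for every $\sigma$. Thus for any $\fq=\fp_1^\sigma$ above $p$ and any $\tau\in\Gal(K/\QQ)$, we have $\fq\mid\fb^\tau\fd\Leftrightarrow\fp_1\mid\fb^{\tau\sigma^{-1}}\fd$, which is ruled out by the choice of $\fp_1$. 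Therefore $((g),\fb^\tau\fd)=1$ for all $\tau$, which gives condition (ii).

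It remains to compute $\fa_\gamma$ for $\gamma=g/\alpha=p/\alpha$ and verify condition (iii). We have $(\gamma)=(p)/(\alpha)=\big(\prod_{\fq\mid p}\fq\big)/(\fb\fd\fp_1)$. Cancelling the factor $\fp_1$ that appears in both numerator and denominator leaves
\[
(\gamma)=\Big(\prod_{\fq\mid p,\,\fq\neq\fp_1}\fq\Big)\cdot(\fb\fd)^{-1},
\]
and by the argument in the previous paragraph every $\fq\neq\fp_1$ above $p$ is coprime to $\fb\fd$. Hence the displayed numerator and denominator are coprime, so $\fa_\gamma=\fb\fd$. Choosing $\fe=\fd$, we have $\fa_\gamma=\fb\fe$ with $\fe\mid\fd$ and $\fd/\fe=\fo$ trivially coprime to $\fb$, which secures condition (iii). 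Lemma \ref{lem:denominator} then ensures that $\e(\gamma\cdot)$ is a non-trivial primitive additive character modulo $\fb$, completing the proof.

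The main subtlety lies in step two, where one must exploit the Galois invariance of $\fd$ to propagate the coprimality of the specifically chosen prime $\fp_1$ with $(\fb\fd)^\tau$ to a coprimality of the full rational prime $p$ with $\fb^\tau\fd$; without this, the element $g=p$ would not satisfy condition (ii), and taking some other generator of $\fp_1\cap\ZZ$ would not help since $\fp_1\cap\ZZ=p\ZZ$ is forced.
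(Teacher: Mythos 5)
Your proof is correct and follows essentially the same route as the paper's: both apply Lemma \ref{lem:alg1}(ii) to $\fc=\fb\fd$, use the Galois invariance of $\fd$, verify $\fa_\gamma=\fb\fd$, and conclude via Lemma \ref{lem:denominator} with $\fe=\fd$. The only difference is your choice $g=p$, the rational prime below $\fp_1$ (justified by unramifiedness and the transitive Galois action on the primes above $p$), where the paper instead takes $g=N_{K/\QQ}(\nu)$ for a suitable $\nu\in\fp_1$ supplied by Lemma \ref{lem:alg1}(i); both choices work.
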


\begin{proof}
We consider the integral ideal
$\fc=\fb\fd$. Observe that $\fd^\tau=\fd$ for all $\tau\in \Gal(K/\QQ)$,
since $\fd=\hat\fo^{-1}$ and the trace is invariant under the action of the Galois group.
Taking $\ve=1$ in 
Lemma \ref{lem:alg1}(ii), we
can   find  $\alpha\in \fc$ and a prime
ideal $\fp_1$ coprime to $\fc^\tau=\fb^\tau\fd$,  
for every $\tau\in \Gal(K/\QQ)$, 
with $\n\fp_1 \ll \n \fb$, and such that
$(\alpha)=\fc\fp_1$.
It follows from Lemma~\ref{lem:alg1}(i)  that  there exists $\nu\in \fp_1$ such that  $((\nu), \fc^\tau)=1$
for any $\tau\in \Gal(K/\QQ)$.
But this implies that 
$g=N_{K/\QQ}(\nu)$ is coprime to $\fc^\tau$, 
for any $\tau\in \Gal(K/\QQ)$, 
with $g\in  \fp_1$.
We will show that   $\fc=\fa_\gamma$ with $\gamma=g/\alpha$, after which an application of 
Lemma \ref{lem:denominator} with $\fe=\fd$ will complete the proof.
To check the claim we note that 
$$
\beta\in \fa_\gamma \Leftrightarrow \gamma\beta\in \fo \Leftrightarrow (g\beta)\subset (\alpha)\Leftrightarrow (\alpha)=\fb\fd\fp_1\mid (g\beta)
\Leftrightarrow \beta\in \fb\fd,
$$
since $\fp_1\mid (g)$ and 
$\fb\fd$ is coprime with $(g)$.
\end{proof}

\subsection{The $G$-invariant ideal and an important $\ZZ$-module}\label{s:dual}

Let $F$ be a  generalised quadratic form, as in Definition \ref{def:gq}.
Let $G=G_F\subset \Gal(K/\QQ)$ be the subset of automorphisms $\tau\in \Gal(K/\QQ)$ that actually appear in $F$.  Note that  $G=\{\mathrm{id}\}$ if and only if $F$ is a standard quadratic form. 
For any integral ideal $\fb$ we define the {\em 
$G$-invariant ideal} to be
\begin{equation}\label{eq:Gb}
 \gfb=\bigcap_{\tau\in G} \fb^{\tau^{-1}}.
\end{equation}
This is the least common multiple of the ideals
$\fb^{\tau^{-1}}$ for $\tau\in G$.

Next, associated to our generalised quadratic form $F$ is a  {\em generalised bilinear form}
\begin{equation}\label{eq:bilinear}
B(X_1,\dots,X_n;Y_1,\dots,Y_n)=
\sum_{1\leq i,j\leq n} \sum_{\tau,\tau'\in \Gal(K/\QQ)}c_{i,j,\tau,\tau'} X_i^{\tau} Y_j^{\tau'}.
\end{equation}
This defines a map $K^n\times K^n\to K$, with 
$$
B(\x;\u+\v)=B(\x;\u)+B(\x;\v) \quad \text{and}\quad 
B(\u+\v;\y)=B(\u;\y)+B(\v;\y),
$$
for any vectors $\x,\y,\u,\v\in K^n$. (However,  this fails to be a bilinear form on $K^n$ since 
$B(\lambda \x;\y)$, $B(\x;\lambda\y)$ and $\lambda B( \x;\y)$ needn't be equal for  
 $\lambda\in K$.)

For any  ideal $\fb\subset \fo$, let 
$$
\calH_\calb=\left\{
\bfh \in \calO^n: 
\text{$F(\bfa + \bfh) \equiv F(\bfa) \modu \calb$ for all  $\bfa \in \calO^n$}\right\}.
$$
This is an  additive group and it is clear that  $\gfb^n\subset \cH_\fb \subset \calO^n$,  where $\gfb$ is the $G$-invariant ideal defined in \eqref{eq:Gb}.
By testing the hypothesis with $\bfa \equiv \0 \modu \gfb$, 
we have $F(\bfh) \equiv 0 \modu \calb$ for any 
 $\bfh\in \cH_\fb$. Hence
\begin{equation}\label{eq:define-H'}
\calH_\calb=\left\{
\bfh \in \calO^n: 
\text{$2B(\bfa;\h)\in \fb$ for all  $\bfa \in \calO^n$}\right\}.
\end{equation}
We claim that $\cH_\fb$ has the structure of a 
finitely generated $\Z$-module. To see this, let  $\bfe_i$ be the $i$th unit vector,
for $1\leq i\leq n$. Observe that $(\n \grb) \ome_j\bfe_i\in \calH_\grb$ for all $1\leq i\leq n$ and $1\leq j\leq d$. Hence the image of $\calH_\grb$ under the isomorphism $\calO^n \cong \Z^{nd}$ is a lattice of full rank and, thus, finitely generated as a $\Z$-module.

The set $\calH_\calb$ will emerge naturally in our analysis of certain key exponential sums and it will be important to have an estimate for its index in $\fo^n$. In the special case \eqref{eq:special-0} it will be easier to calculate 
 $\calH_\grb$  directly, but for now we content ourselves with proving a general bound. 
In the following lemma
we consider the coefficient matrix $(c_{i,j,\tau,\tau'})_{(i,\tau)\times (j,\tau')}$ 
of a generalised quadratic form 
as a  $nd\times nd$ matrix.

\begin{lemma}
There is a constant $C_1>0$,  depending only on $F$, such that for all $\fb$ we have
$$|\calO^n/\calH_\grb|\leq C_1 (\n \grb)^{\rank(c_{i,j,\tau,\tau'})}.$$
Moreover, there is an integral ideal $\fd_1$, such that one can take $C_1=1$ for all ideals $\fb$ with 
$(\grb,\fd_1)=1$.
\end{lemma}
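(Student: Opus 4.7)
My plan is to express $\calH_\fb$ as the preimage of $\fb^{nd}\subset \fo^{nd}$ under an explicit $\ZZ$-linear map from $\fo^n$, and then to estimate the resulting image via the structure theorem for finitely generated modules over the Dedekind domain $\fo$. This approach will in fact yield $C_1=1$ uniformly in $\fb$, so the ``moreover'' part of the lemma holds trivially with $\fd_1=\fo$.

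For the reformulation, set
$$
L_{i,\tau}(\h) = 2\sum_{j,\tau'} c_{i,j,\tau,\tau'} h_j^{\tau'}, \qquad
T_{k,i}(\h) = \sum_{\tau} \omega_k^\tau L_{i,\tau}(\h),
$$
with $\{\omega_1,\dots,\omega_d\}$ the fixed $\ZZ$-basis of $\fo$; both lie in $\fo$ when $\h\in \fo^n$. Testing the condition in \eqref{eq:define-H'} against $\a=\omega_k\mathbf{e}_i$ shows that $2B(\a;\h)\in \fb$ for all $\a\in \fo^n$ is equivalent to $T_{k,i}(\h)\in \fb$ for all $k,i$. Hence $\calH_\fb=T^{-1}(\fb^{nd})$, where $T\colon \fo^n\to \fo^{nd}$, $\h\mapsto (T_{k,i}(\h))_{k,i}$, is $\ZZ$-linear, and consequently
$$
\fo^n/\calH_\fb\;\cong\;T(\fo^n)\big/\big(T(\fo^n)\cap \fb^{nd}\big).
$$

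The key step is to introduce the $\fo$-submodule $N:=\fo\cdot T(\fo^n)\subseteq \fo^{nd}$ and to show that $\rank_\fo N=r$, where $r:=\rank(c_{i,j,\tau,\tau'})$. To see this, letting $M$ denote the $nd\times nd$ coefficient matrix and writing $\tilde\h=(h_j^{\tau'})_{j,\tau'}\in K^{nd}$, we have $T(\h)=2M\tilde\h$. As $\h$ ranges over $K^n$, the vectors $\tilde\h$ fill out a $\QQ$-subspace $V\subseteq K^{nd}$ whose $K$-span is all of $K^{nd}$: indeed, fixing a single index $j$ and varying $h_j$ produces the rows $(\omega_k^\tau)_\tau$ for $k=1,\dots,d$ in the $j$-th block, and these span $K^d$ over $K$ because $\det(\omega_k^\tau)_{k,\tau}\neq 0$. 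Therefore $N\otimes_\fo K=K\cdot T(\fo^n)=M(K^{nd})$ has $K$-dimension $r$, and so $\rank_\fo N=r$.

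Since $N$ is a finitely generated, torsion-free $\fo$-module of rank $r$ over the Dedekind domain $\fo$, the structure theorem yields $N\cong \fo^{r-1}\oplus \fa$ for some fractional ideal $\fa$, whence $|N/\fb N|=(\n\fb)^r$. The inclusion $T(\fo^n)\subseteq N$ induces an injection $T(\fo^n)/(T(\fo^n)\cap \fb^{nd})\hookrightarrow N/(N\cap \fb^{nd})$, while the obvious containment $\fb N\subseteq N\cap \fb^{nd}$ provides a surjection $N/\fb N\twoheadrightarrow N/(N\cap \fb^{nd})$. Chaining these yields $|\fo^n/\calH_\fb|\leq (\n\fb)^r$, as required. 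The hardest point is the rank computation for $N$, where the Galois-spanning property of $(\omega_k^\tau)_{k,\tau}$ enters in an essential way, since $T(\fo^n)$ itself is only a $\ZZ$-module (not stable under $\fo$) on account of the Galois twists; the rest is formal.
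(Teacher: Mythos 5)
Your proof is correct, but it takes a genuinely different route from the paper's. The paper argues directly with the coefficient matrix: it selects a maximal linearly independent set $\calS$ of rows of $(c_{i,j,\tau,\tau'})$, clears the denominators of the linear dependence coefficients with a single element $\alp\in\calO$, introduces the auxiliary subgroup $\calH_\grb'\subseteq\calH_\grb$ cut out by requiring the $\calS$-indexed linear forms to lie in $(\alp)\grb$, and injects $\calO^n/\calH_\grb'$ into $(\calO/\alp\grb)^{\Del}$; this yields $C_1=(\n(\alp))^{\Del}$ in general and $C_1=1$ once $(\grb,(\alp))=1$, which is exactly where the ideal $\fd_1$ comes from. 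You instead pass to the image lattice $T(\calO^n)$ and its $\calO$-saturation $N$, and invoke the structure theorem for finitely generated torsion-free modules over the Dedekind domain $\calO$ to get $|N/\grb N|=(\n\grb)^r$ exactly. This buys you the stronger, uniform conclusion $C_1=1$ for all $\grb$ (so the ``moreover'' clause is automatic), at the cost of slightly heavier machinery; the paper's argument is more elementary but loses a bounded factor at the ideals dividing $(\alp)$.

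One small imprecision: $T(\bfh)$ is not literally $2M\tilde{\bfh}$. Writing $L(\bfh)=2M\tilde{\bfh}$ for the vector indexed by $(i,\tau)$, one has $T=\Omega\circ L$ where $\Omega$ is the invertible (over $K$, not over $\calO$) blockwise change-of-basis matrix built from $(\ome_k^{\tau})_{k,\tau}$. Since $\Omega$ is invertible over $K$, the $K$-span of $T(\calO^n)$ still has dimension $\rank M=r$, so your rank computation for $N$ and everything downstream survives unchanged; but as written the identification conflates the two maps, and it is worth stating the composition explicitly because the failure of $\Omega^{-1}$ to be integral is precisely the phenomenon the rest of the section has to manage.
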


\begin{proof}
Let $\Del= \rank (c_{i,j,\tau,\tau'})_{(i,\tau)\times (j,\tau')}$. Let $\calS\subset \{1,\ldots, n\}\times \Gal(K/\QQ)$ be a subset of indices such that the vectors $(c_{i,j,\tau,\tau'})_{(j,\tau')}$, $(i,\tau)\in \calS$, are linearly independent and $|\calS|$ is maximal. Then, for any $(k,\sigma)\in \{1,\ldots, n\}\times \Gal(K/\QQ)$ there are numbers $a_{i,\tau}^{(k,\sigma)}$ (for $(i,\tau)\in \calS$), such that
$$ c_{k,j,\sigma,\tau'}= \sum_{(i,\tau)\in \calS} a_{i,\tau}^{(k,\sigma)}c_{i,j,\tau,\tau'},$$
for all $1\leq j\leq n$ and $\tau'\in \Gal(K/\QQ)$. Let $\alp\in \calO$ such that $\alp a_{i,\tau}^{(k,\sigma)}\in \calO$ for all $(i,\tau)\in \calS$ and $(k,\sigma)\in \{1,\ldots, n\}\times \Gal(K/\QQ)$. Now, set
$$\calH_\grb'
= \left\{\bfh\in \calO^n: \sum_{1\leq j\leq n} \sum_{\tau'\in \Gal(K/\QQ)} c_{i,j,\tau,\tau'}h_j^{\tau'}\in (\alp)\grb,\, \forall (i,\tau)\in \calS\right\}.$$
Observe that
$\calH_\grb'\subset \calH_\grb$.
Moreover, if $\grb$ and $(\alp)$ are coprime, then the ideal $(\alp)$ may be omitted in the definition of $\calH_\grb'$.\par
Finally, we observe that there is an injection
\begin{equation*}
\begin{split}
\psi: \calO^n/\calH_\grb' &\rightarrow (\calO/\alp\grb)^{\Del},\quad
[\bfh]\mapsto \left( \sum_{1\leq j\leq n}\sum_{\tau'\in \Gal(K/\QQ)}c_{i,j,\tau,\tau'}h_j^{\tau'}\right)_{(i,\tau)\in \calS}.
\end{split}
\end{equation*}
Hence 
$|\calO^n/\calH_\grb'|\leq (\n (\alp\grb))^{\Del},$
which suffices, since   $\calH_\grb'\subset \calH_\grb$. 
\end{proof}

We now wish to provide  an alternative upper bound involving $\calH_\grb$, under a suitable assumption on the generalised quadratic form.

\begin{definition}\label{cond2}
We say that $F(X_1,\dots,X_n)$ is {\em admissible} if there exist vectors 
$$
\bfv_1,\ldots, \bfv_n\in K^n
$$ 
such that 
$B(\bfv_i;\bfh)=0$ for all $1\leq i\leq n$ if and only if  $\bfh=\0$.
\end{definition}

In this language  a standard  quadratic form is admissible if and only if it is non-singular. 
We may now prove the following result.

\begin{lemma}\label{lem:discriminant}
Assume that $F$ is admissible. Then there exists a constant $C_2>0$,  depending only on $F$, such that 
$$|\calH_\grb/\gfb^n|\leq C_2\frac{(\n\gfb)^n}{(\n\grb)^n} .$$
Moreover, there exists an integral ideal $\fd_2$ such that one can take $C_2=1$ for all ideals $\grb$ with $(\grb,\fd_2)=1$.
\end{lemma}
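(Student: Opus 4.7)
The plan is to bound $|\calO^n/\calH_\grb|$ from below by $(\n\grb)^n/C_2$; since $|\calO^n/\gfb^n|=(\n\gfb)^n$, this immediately yields the stated inequality for $|\calH_\grb/\gfb^n|$. I will exploit admissibility to produce vectors $\bfv_1,\ldots,\bfv_n\in K^n$ satisfying $B(\bfv_i;\bfh)=0$ for all $i$ only when $\bfh=\0$, and then scale each $\bfv_i$ by a common positive rational integer $r\in\ZZ$ (chosen to clear denominators) so that $r\bfv_i\in\calO^n$. Because $r\in\QQ$ is fixed by every $\tau\in\Gal(K/\QQ)$, one has $B(r\bfv_i;\bfh)=rB(\bfv_i;\bfh)$, so admissibility is preserved. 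After relabelling I may assume $\bfv_i\in\calO^n$.

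The core of the argument is the $\ZZ$-linear reduction map
$$\Psi:\calO^n\to(\calO/\grb)^n,\qquad \Psi(\bfh)=\bigl(2B(\bfv_i;\bfh)\bmod{\grb}\bigr)_{i=1}^n.$$
By the characterisation \eqref{eq:define-H'}, every $\bfh\in\calH_\grb$ satisfies $2B(\bfa;\bfh)\in\grb$ for all $\bfa\in\calO^n$; taking $\bfa=\bfv_i$ yields $\calH_\grb\subset\ker\Psi$, whence $|\calO^n/\calH_\grb|\ge|\Psi(\calO^n)|$. To estimate the image I invoke the auxiliary $\ZZ$-linear map $\Lambda:\calO^n\to\calO^n$, $\bfh\mapsto(B(\bfv_i;\bfh))_{i=1}^n$, which is injective by admissibility and hence has image $M:=\Lambda(\calO^n)$ of finite index $C:=[\calO^n:M]$ depending only on $F$. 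Identifying $\Psi(\calO^n)=(2M+\grb^n)/\grb^n$, the second isomorphism theorem gives
$$|\Psi(\calO^n)|=\frac{(\n\grb)^n}{[\calO^n:2M+\grb^n]}\ge\frac{(\n\grb)^n}{[\calO^n:2M]}=\frac{(\n\grb)^n}{2^{dn}C},$$
and the choice $C_2=2^{dn}C$ delivers the main bound.

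For the coprime clause I take $\fd_2=(2C)$. When $(\grb,\fd_2)=1$ one has $\gcd(\n\grb,2C)=1$, so multiplication by $\n\grb$ acts as an automorphism of the finite abelian group $\calO^n/2M$; together with the containment $\n\grb\calO^n\subset\grb^n$ this forces $2M+\grb^n=\calO^n$, so the estimate above sharpens to $|\Psi(\calO^n)|=(\n\grb)^n$ and $C_2=1$ is admissible. The principal subtlety throughout is that $B$ is only $\ZZ$-linear in each argument, not $\calO$-linear, which rules out the customary trick of scaling $\bfv_i$ by an element of $\calO$ to clear denominators; recognising that scaling by a \emph{rational} integer is safe is what unlocks the argument, after which the proof reduces to a routine lattice-index manipulation.
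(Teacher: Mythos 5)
Your argument is correct and follows essentially the same route as the paper: scale the admissibility vectors $\bfv_i$ by a rational integer, use the injectivity of $\bfh\mapsto(2B(\bfv_i;\bfh))_i$ to produce a full-rank image lattice, and compare its index against $\grb^n$ (your $[\calO^n:2M+\grb^n]$ computation is just the second-isomorphism-theorem reformulation of the paper's $\Gam/(\grb^n\cap\Gam)$). The only difference is that you make the coprime clause explicit with $\fd_2=(2C)$ and the multiplication-by-$\n\grb$ argument, which the paper merely asserts.
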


We can use this result to get information about the index of $\calH_\grb$ in $\fo^n$ via the identity
\begin{equation}\label{eq:dinky}
|\fo^n/\calH_\grb||\calH_\grb/\gfb^n| =|\fo^n/\gfb^n|=(\n \gfb)^n.
\end{equation}
Lemma \ref{lem:discriminant} is of the expected  magnitude, which we can see by considering the case of the standard diagonal quadratic form  $F(\XX)=\sum_{i=1}^n c_iX_i^2$, for example, 
with non-zero $c_1,\dots,c_n\in \fo$. In this case $G=\{\mathrm{id}\}$ and $\gfb=\fb$. 
It therefore follows that 
$|\cH_\fb/\gfb^n|\ll 1$, since 
$\cH_\fb=(2c_1)^{-1}\fb\times\cdots \times (2c_n)^{-1}\fb$.

\begin{proof}[Proof of Lemma \ref{lem:discriminant}]
Let $\bfv_1,\dots, \bfv_n$ be a set of vectors as in Definition  \ref{cond2}. By scaling these vectors with a rational  integer, we may assume that $\bfv_i\in \calO^n$ for all $1\leq i\leq n$. We define the auxiliary set
$$ \widetilde{\calH}_\grb=\{\bfh\in \calO^n: 2B(\bfv_i;\bfh)\in \grb,\ \forall 1\leq i\leq n\},$$
and observe that $\calH_\grb\subset \widetilde{\calH}_\grb$. Next, consider the map
\begin{equation*}
\begin{split}
\phi: \calO^n &\rightarrow \calO^n , \quad 
\bfh \mapsto (2B(\bfv_i;\bfh))_{1\leq i\leq n},
\end{split}
\end{equation*}
which is injective by the definition of admissibility in Definition  \ref{cond2}. Let $\Gam$ be the image of $\calO^n$ under the map $\phi$. Then $\phi$ induces an isomorphism
$$ \calO^n/\widetilde{\calH}_\grb \cong \Gam/(\grb^n\cap \Gam).$$
Note that $\Gam$ only depends on $B(\XX;\YY)$ and vectors $\bfv_1,\dots,\bfv_n$, and 
hence can be taken to be independent of the ideal $\grb$. As in \eqref{eq:dinky}, we therefore obtain
$$|\calH_\grb/\gfb^n|\leq |\widetilde{\calH_\grb}/\gfb^n| = \frac{|\calO^n/\gfb^n|}{|\calO^n/\widetilde{\calH}_\grb|} = \frac{(\n\gfb)^n}{|\Gam /(\grb^n\cap \Gam)|} \leq C_\Gam \frac{(\n\gfb)^n}{(\n\grb)^n},$$
where $C_\Gam$ is a constant only depending on $\Gam$. Moreover, there is an ideal $\mathfrak{d}_2$ such that $|\Gam/(\grb^n\cap \Gam)|=(\n\grb)^n$ whenever $(\fd_2,\grb)=1$.
This completes the proof of the lemma.
\end{proof}

\section{Enter the circle method}\label{s:circle}

Our primary tool in this paper is a number field version of the Hardy--Littlewood circle method
to interpret the function $\delta_K$ in \eqref{eq:deltaK}.
Let $K$ be a totally real Galois extension of $\QQ$  of degree $d$.
Let $Q\geq 1$ and let $\alpha\in \fo$.
Then we shall use the version worked out by Browning and Vishe  \cite[Thm.~1.2]{BV}. This states  that there exists a positive constant $c_Q=1+O_A(Q^{-A})$, for any $A>0$,  and an infinitely differentiable function $h: 
(0,\infty)\times \mathbb R\rightarrow \RR$ such that
\begin{equation}\label{eq:BV}
 \delta_K(\alpha)=\frac{c_Q}{Q^{2d}}
 \sum_{(0)\neq \mathfrak{b}\subseteq \fo  }
~\sigstar \sigma(\alpha)h\left(\frac{\n \fb}{Q^{d}}  , \frac{|\n_{K/\QQ} (\alpha)|}{Q^{2d}}\right),
\end{equation}
where $\n\fb=|\fo/\fb|$ denotes the norm of the ideal $\fb$ and 
the notation 
$\sum^{*}_{\sigma \bmod{\fb}}$ means that the sum is taken over
primitive additive characters modulo $\fb$.
Furthermore,  we have $h(x,y)\ll x^{-1}$ 
and $h(x,y)\neq 0$ only if $x\leq \max\{1,2|y|\} $.

We fix some notation, before proceeding further.
Let 
$D_K$ be  
the discriminant of $K$ and  note that $D_K>0$, since $K$ is totally real.
Let $\rho_{1},\dots,\rho_{d}:K \hookrightarrow \RR$ be the distinct real embeddings of $K$,  and let 
 $V=
	K\otimes_\QQ \RR 
\cong \RR^d
$.
There is a canonical embedding $K\hookrightarrow V$ 
given by $\alpha
\mapsto  (\rho_{1}(\alpha), \dots ,  
\rho_{d}(\alpha))$.
We  identify $K$ with its image in $V$. 
If  $v=(v_1,\dots,v_d)\in V$ then 
we  extend the norm and trace on $K$ to get functions 
$\nm(v):V\to\RR$ and $\tr(v):V\to \RR$, with
$$ \nm(v)=\prod_{l=1}^{d}v_l,\quad \tr(v)= \sum_{l=1}^{d}v_l
.$$
We extend the absolute value on $\RR$ to give a norm on $V$ via $| v | = \max_{1\leq l\leq d}|v_l|$, which we extend to $V^n$ in the obvious way.

Let $N\in \fo$ and let $F(X_1,\dots,X_n)$ be a generalised quadratic form defined over $\fo$.
Our central concern is with the 
asymptotic behaviour of the sum
$$
 N_{W}(F,N;P) = 
 \sum_{\substack{\x\in \fo^n\\ F(\x)=N}} W(\x/P) ,
$$
as $P\rightarrow \infty$, for   $W\in \cW_n^+(V)$, where $\cW_n^+(V)$ is the class of smooth weight functions described in \cite[\S 2.2]{BV}.
Our goal in this section is to lay some  groundwork that will be useful 
for Theorems \ref{t:non-diag-1}--\ref{t:diag}, but which applies to arbitrary generalised quadratic forms.

First, in \S \ref{s:link} we shall discuss the link between the descended system associated to $F$ and the ``embedded system'' that arises from looking at all of the different real embeddings of $F$. In \S \ref{s:W} we shall construct the weight function $W$ that features in our counting function $ N_{W}(F,N;P)$. In \S \ref{s:poisson} we shall combine \eqref{eq:BV} with Poisson summation, in order to arrive at a preliminary expression for $ N_{W}(F,N;P) $ in Lemma \ref{lem:poisson}. In \S \ref{s:sum} we  make some preliminary investigations into exponential sums, and similarly for exponential  integrals
in \S \ref{s:integrals}.  In \S \ref{s:trivial} we shall discuss the main term that comes from the trivial character after Poisson summation is applied. 
Finally, in \S \ref{s:non-trivial} we shall make some initial observations concerning the contribution from the non-trivial characters.

\subsection{The  embedded system} \label{s:link}
Let $F(X_1,\dots,X_n)$ be a generalised quadratic form and let $\{\omega_1,\dots,\omega_d\}$ be a $\ZZ$-basis for $\fo$. We have seen in 
\eqref{eq:F-to-Q} how there is a descended system $\{Q_1,\dots,Q_d\}$ 
of quadratic forms, that is 
associated to $F$ via
$$
F(X_1,\dots,X_n)=\sum_{1\leq i\leq d}{\omega_i}Q_i(\U_1,\dots,\U_d),
$$
with variables 
$\U_l=(U_{l1},\dots,U_{ln})$ for $1\leq l\leq d$.

We will need to be able to relate the descended system to the {\em embedded system}, which amounts to how $F(\x)$ embeds in $V$ for given $\x\in K^n$.  
We extend  $F:K^n\to K$ to get a  map $V^n\to V$, through the identification of $K$ with $V$. 
Associated to $\x$ is the vector 
 $
(\x^{(1)}, \dots, \x^{(d)}),
$
with $\x^{(l)}\in \RR^n$ for $1\leq l\leq d$. 
Let $l\in \{1,\dots,d\}$. To any  $\tau\in \Gal(K/\QQ)$ may be associated a unique 
integer $l_\tau\in \{1,\dots,d\}$ such that 
\eqref{eq:l_tau} holds.
Then we define
\begin{equation}\label{eq:Ql}
F^{(l)}(\x^{(1)},\dots,\x^{(d)})=
\sum_{1\leq i,j\leq n} \sum_{\tau,\tau'\in \Gal(K/\QQ)}c_{i,j,\tau,\tau'}^{(l)}x_i^{(l_{\tau^{-1}})} x_j^{(l_{\tau'^{-1}})},
\end{equation}
where $c_{i,j,\tau,\tau'}^{(l)}=\rho_l(c_{i,j,\tau,\tau'})\in \RR$. 
With this notation we have 
$$
\rho_l(F(\x))=F^{(l)}(\x^{(1)},\dots,\x^{(d)}).
$$
Thus 
$\rho_l(F(\x))$ is a real quadratic form  in the $dn$ variables 
$\x^{(1)},\dots,\x^{(d)}$. We call $\{F^{(1)},\dots,F^{(d)}\}$ the {\em embedded system}.
In particular, it is clear that $N_{K/\QQ}(F(\x))=\nm(F(\x))$ and 
\begin{equation}\label{eq:sol}
\tr(vF(\x))=
\sum_{1\leq l \leq d} v_l \rho_l\left(F(\x)\right),
\end{equation}
for any $v=(v_1,\dots,v_d)\in V$ and $\x\in V^n$,
identities that we shall often make use of
in our analysis of the exponential integrals in \S \ref{s:integrals}.

Note that 
if  $F$ is a standard quadratic form, then $\rho_l(F(\x))=F^{(l)}(\x^{(l)})$, for $1\leq l\leq d$. 
One positive effect of this is that the relevant  oscillatory  integrals 
factorise into a product of $d$ integrals, one for each embedding. 
The situation is much  more complicated for  generalised quadratic forms since
there is usually no such factorisation.

Let $\mathbf{A}=(\omega_j^{(i)})_{1\leq i,j\leq d}$, where $\omega_j^{(i)}=\rho_i(\omega_j)$. 
Then $(\det \mathbf{A})^2=D_K$. 
Moreover, on recalling
that $\x=\omega_1\u_1+\cdots+\omega_d\u_d$, 
 we have 
\begin{equation}\label{eq:W-transform}
\begin{pmatrix}
\x^{(1)}\\
 \vdots \\
 \x^{(d)}
 \end{pmatrix}
 =
\mathbf{W} 
\begin{pmatrix}
\u_1\\
 \vdots \\
 \u_d
 \end{pmatrix},
\end{equation}
where $\mathbf{W}$ is the $dn\times dn$ block matrix
\begin{equation}\label{eq:W-matrix}
\mathbf{W}=\begin{pmatrix}
    \omega_1^{(1)}\mathbf{I}_n      & \omega_2^{(1)}\mathbf{I}_n &  \dots & 
    \omega_d^{(1)} \mathbf{I}_n\\
    \omega_1^{(2)} \mathbf{I}_n     & \omega_2^{(2)} \mathbf{I}_n&  \dots & 
    \omega_d^{(2)}\mathbf{I}_n\\
    \hdotsfor{4} \\
    \omega_1^{(d)} \mathbf{I}_n      & \omega_2^{(d)} \mathbf{I}_n &  \dots & 
    \omega_d^{(d)}\mathbf{I}_n
\end{pmatrix}.
\end{equation}
Switching appropriate   rows and columns  takes $\mathbf{W}$ to 
$\mathrm{Diag}(\mathbf{A},\dots,\mathbf{A})$, whence 
$\det \mathbf{W}=(\det \mathbf{A})^{n}=D_K^{n/2}$.
In particular, it follows that 
$$
F^{(l)}(\x^{(1)},\dots,\x^{(d)})
=\sum_{1\leq i\leq d} \omega_i^{(l)} Q_i(\u_1\dots,\u_d),
$$
for any $1\leq l\leq d$, 
under the transformation \eqref{eq:W-transform}.

\subsection{Construction of the weight $W$}\label{s:W}

We  assume that the descended system is of codimension $d$ and has a non-singular real point. This means that there exists $\underline\bxi=(\bxi_1,\dots,\bxi_d)\in \RR^{dn}$ such that  $J_{Q_1,\dots, Q_d}(\underline\bxi)$ has rank $d$, where  
  $$
J_{Q_1,\dots,Q_d}=
\left(\frac{\partial }{\partial X_j^{(k)}}Q_l\right)_{\substack{1\leq l\leq d\\ 
1\leq k\leq d, 1\leq j\leq n}
}
$$
is the associated
 $d\times dn$
Jacobian 
 matrix. 
 Define the smooth weight function
$$
w(x)=\begin{cases}
e^{-1/(1-x^2)} &\text{ if $|x|<1$,}\\
0&\text{ if $|x|\geq 1$,}
\end{cases}
$$
and let $\delta>0$ be a  small parameter.
 In this paper we shall work with the weight function
$W:V^n\to \RR_{\geq 0}$, which is given by
$$
W(\x)=w(\delta^{-1}|\mathbf{W}^{-1}\x-\underline\bxi|),
$$
where $\x$ is identified with  $(\x^{(1)},\dots,\x^{(d)})$, and where
$\mathbf{W}$ is the matrix in \eqref{eq:W-matrix}.
It is clear that $W$ is infinitely
differentiable and that it is supported on the region
$|\mathbf{W}^{-1}\x-\underline\bxi| \leq \delta$. 
Ultimately we will want to work with a value of $\delta$ that is
sufficiently small, but which  still satisfies $1\ll \delta \leq 1$ for an absolute implied constant.

\subsection{Poisson summation}\label{s:poisson}

It follows from \eqref{eq:BV} that 
\begin{equation}\label{eq:No1}
\begin{split}
&\No \\
&\quad =\frac{c_{Q}}{Q^{2d}}\sum_\mathfrak{b} \sigstar
\sum_{\x\in \fo^{n}}
   \sigma(F(\x)-N)W(\x/P)h\left(\frac{\n\mathfrak{b}}{Q^d}  ,\frac{|N_{K/\QQ}(F(\x)-N)|}{Q^{2d}}\right),
\end{split}
\end{equation}
for any $Q\geq 1$.
Here 
the constant $c_Q$ satisfies 
$
c_Q=1+O_A(Q^{-A}),
$ 
for any  $A>0$.
 Furthermore,  we have $h(x,y)\ll x^{-1}$ for all $y$
and $h(x,y)\neq 0$ only if $x\leq \max\{1,2|y|\} $.

In our work we will take $Q=P$ and we  henceforth follow the convention that 
the implied constant in any estimate involving $W$  
is allowed to depend implicitly on the parameters  that enter into the  definition
of $\mathcal{W}_n(V)$
in \cite[\S 2.2]{BV}.
Likewise, the integer $N$ and the number field $K$ are considered fixed once and for all,  so 
that all implied constants are allowed to
depend implicitly on $N$ and $K$. 
In view of the  fact that $h(x,y)\neq 0$ only if $x\leq \max(1,2|y|) $, 
it is clear that the sum over $\mathfrak{b}  $ is restricted to $\n \fb\ll Q^{d}=P^d $.

If  $F$ were a standard quadratic form over $\fo$, we would 
 proceed by breaking the sum over $\x$
into residue classes modulo $\fb$, before executing an application of Poisson summation. 
This would ultimately lead to an expression of the form \cite[Thm.~5.1]{BV}.
For generalised quadratic forms $F$ this 
route is not directly accessible,  since for given $\a,\h\in \fo^n$ 
and any primitive character $\sigma$ modulo $\fb$, 
  one may have 
$\sigma(F(\a+\h))\neq \sigma(F(\a))$
even when  $\h\in \fb^n$. 
In this way, we see that a special role will be played by the set  
$\cH_\fb$, that was introduced in \S \ref{s:dual}.

\begin{lemma}
 \label{lem:poisson}
We have
$$
\No=
\frac{c_{P}P^{(n-2)d}}{D_{K}^{n/2}} 
\sum_{\n\mathfrak{b}\ll P^d} 
\sum_{\substack{\m\in \widehat{\gfb  }^n}}
(\n \gfb  )^{-n}
S_\mathfrak{b}  (N;\m)
I_\mathfrak{b}  (N/P^2;P\m),
$$
where the sum over $\fb$ is over non-zero integral ideals and
\begin{align*}
S_\mathfrak{b}  (N;\m)
&=\sigstar\sum_{\ma{a}\bmod{\gfb }}\sigma(F(\ma{a})-N)\e(\m. \ma{a}),\\
I_\mathfrak{b}  (t;\k)
&=
\int_{ V^{n}}W(\x)h\left(\frac{\n \mathfrak{b}}{P^d}  ,|\nm(F(\x)-t)|\right)\e\left(-\k.\x \right)\d\x.
\end{align*}
\end{lemma}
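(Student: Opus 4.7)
The plan is to start from the expression for $N_W(F,N;P)$ in \eqref{eq:No1} with $Q=P$, break the sum over $\x\in\fo^n$ into residue classes modulo the $G$-invariant ideal $\gfb$, and then apply Poisson summation to the remaining sum over the ideal lattice $\gfb^n\subset V^n$. The ideal $\gfb$ (rather than $\fb$) is the right modulus here because, for a generalised quadratic form, the function $\x\mapsto \sigma(F(\x)-N)$ is typically not invariant under $\x\mapsto \x+\h$ when $\h\in\fb^n$, but it is invariant when $\h\in\gfb^n$. Indeed, $\gfb^n\subset \cH_\fb$ by the definitions in \S\ref{s:dual}, and so for any $\a\in\fo^n$ and any $\h\in\gfb^n$ we have $F(\a+\h)\equiv F(\a)\bmod \fb$, hence $\sigma(F(\a+\h)-N)=\sigma(F(\a)-N)$ since $\sigma$ is a character modulo~$\fb$.

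Writing $\x=\a+\y$ with $\a$ running through a set of representatives of $\fo^n/\gfb^n$ and $\y\in\gfb^n$, we therefore obtain
$$
\sum_{\x\in\fo^n}\sigma(F(\x)-N)f(\x)
=\sum_{\a\bmod\gfb}\sigma(F(\a)-N)\sum_{\y\in\gfb^n}f(\a+\y),
$$
where $f(\x)=W(\x/P)h\bigl(\n\fb/P^d,|\nm(F(\x)-N)|/P^{2d}\bigr)$. Next I would apply Poisson summation for the full rank lattice $\gfb^n\subset V^n$, using the pairing $(\m,\x)\mapsto \e(\tr(\m\cdot\x))$. The covolume of $\gfb^n$ in $V^n$ equals $(\n\gfb)^n D_K^{n/2}$, and its dual under the trace pairing is $\widehat{\gfb}^{\,n}=(\gfb)^{-n}\fd^{-n}$, so Poisson summation gives
$$
\sum_{\y\in\gfb^n}f(\a+\y)
=\frac{1}{(\n\gfb)^n D_K^{n/2}}\sum_{\m\in\widehat{\gfb}^{\,n}}\widehat{f}(\m)\,\e(\m\cdot\a),
$$
where $\widehat{f}(\m)=\int_{V^n}f(\x)\e(-\m\cdot\x)\,\d\x$.

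To bring $\widehat{f}$ into the claimed form, I would make the change of variables $\x=P\y$ in the Fourier integral. Using that $F$ is quadratic and hence $F(P\y)-N=P^2(F(\y)-N/P^2)$, so that $|\nm(F(P\y)-N)|=P^{2d}|\nm(F(\y)-N/P^2)|$, the Jacobian yields a factor $P^{dn}$ and one obtains $\widehat{f}(\m)=P^{dn}I_\fb(N/P^2;P\m)$ with $I_\fb$ as in the statement. Inserting this back, collecting the exponential in $\a$ with $\sigma(F(\a)-N)$ to form the sum $S_\fb(N;\m)$, and using the support condition $h(x,y)\neq0\Rightarrow x\le\max(1,2|y|)$ to restrict to $\n\fb\ll P^d$ (since the factor $|\nm(F(\x)-N)|/P^{2d}$ is bounded on the support of $W$), yields exactly the formula asserted in Lemma~\ref{lem:poisson}.

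The only real subtlety is the opening step—legitimising residue decomposition modulo $\gfb$ rather than $\fb$; once the inclusion $\gfb^n\subset\cH_\fb$ from \S\ref{s:dual} is in hand, everything else is a direct application of Poisson summation and a change of variables exploiting the homogeneity of $F$.
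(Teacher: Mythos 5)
Your proposal is correct and follows essentially the same route as the paper: decomposition modulo $\gfb$ justified by $\gfb^n\subset\cH_\fb$, Poisson summation over the lattice $\gfb^n$ with covolume $(\n\gfb)^nD_K^{n/2}$ and dual $\widehat{\gfb}^{\,n}$, and the rescaling $\x=P\y$ using homogeneity of $F$ to produce $P^{dn}I_\fb(N/P^2;P\m)$. No gaps.
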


\begin{proof}
Our approach is based on breaking the $\x$-sum in \eqref{eq:No1} into residue classes modulo $\gfb$.
Since $Q=P$ and $\gfb^n\subset \cH_\fb$, it follows that this sum equals
$$
\sum_{\ma{a}\in (\fo/\gfb  )^n}\sigma(F(\ma{a})-N)
\sum_{\x\in {\gfb}^n}W\left((\x+\ma{a})/P\right)
h\left(\frac{\n \mathfrak{b}}{P^d}  ,\frac{|N_{K/\QQ}(F(\x+\ma{a})-N)|}{P^{2d}}\right),
$$
for any primitive character $\sigma$ modulo $\fb$. 
We apply the multi-dimensional Poisson summation formula (cf. \cite[\S 5]{BV}). Recalling that $K$ is totally real, we find that 
the inner $\x$-sum  is equal to
\begin{align*}
\frac{1}{D_K^{n/2}(\n \gfb  )^{n}} 
\sum_{\m\in \widehat{\gfb  }^n}
\e(\m.\a)
\int_{V^{n}} W(\x/P)
h\left(\frac{\n \mathfrak{b}}{P^d}, \frac{|\nm(F(\x)-N)|}{P^{2d}}\right)
\e(-\m.\x)\d \x,
\end{align*}
where we recall that 
$\widehat{\gfb  }=\gfb^{-1}\fd^{-1}$ is the dual of $\gfb$. 
Putting everything together in \eqref{eq:No1}, we have therefore established
that 
$$
\No=
\frac{c_{P}}{D_{K}^{n/2}P^{2d}}
\sum_{\n\mathfrak{b}\ll P^d} \sum_{\m\in \widehat{\gfb  }^n}(\n \gfb  )^{-n}S_\mathfrak{b}  (N;\m)
\tilde I_\mathfrak{b}  (\m),
$$
with $S_\mathfrak{b}  (N;\m)$ as in the statement of the lemma and 
$$
\tilde I_\mathfrak{b}  (\m)= 
\int_{ V^{n}}W(\x/P)h\left(\frac{\n \mathfrak{b}}{P^d}  ,\frac{|\nm(F(\x)-N)|}{P^{2d}}\right)\e\left(-\m.\x \right)\d\x.
$$
A simple change of variables yields 
$\tilde I_\mathfrak{b}  (\m)=P^{dn} I_\mathfrak{b}  (N/P^2;P\m)$, as required.
\end{proof}

\subsection{The exponential sum}
\label{s:sum}

We proceed by  discussing 
$S_\mathfrak{b}  (N;\m)$ 
in Lemma \ref{lem:poisson}, 
for $\m\in \widehat\gfb^n$. Let  $\gamma=g/\alpha\in \mathfrak{F}(\fb)$ be as in Lemma \ref{lem:orthogonal}.
Then we have 
$$
\starsum_{\sigma\bmod{\fb}
}\sigma(x) = 
\sum_{a\in (\fo/\fb)^*} \e( \gamma a x),
$$
for any $x\in \fo$.
It follows that 
\begin{equation}\label{eq:SUM}
S_\mathfrak{b}  (N;\m)
=\sum_{a\in (\fo/\fb)^*} \psi(-\gamma a N)
\sum_{\x\bmod{\gfb}}\psi\left( \gamma a F(\x)+   \m. \x\right).
\end{equation}
Our work hinges upon the following upper bound for this sum.

\begin{lemma}\label{expsum1}
We have 
$$
|S_\mathfrak{b}  (N;\m)|
\leq |(\calO/\calb)^*| |\calH_\calb/\Gb^n|^{1/2}|\calO/\Gb|^{n/2},
$$
where $\cH_\fb$ is given by \eqref{eq:define-H'}.
\end{lemma}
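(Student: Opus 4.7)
The plan is to reduce the bound for $S_\fb(N;\m)$ to a bound on the inner character sum
$$
T(a)=\sum_{\x\bmod\gfb}\psi\bigl(\gamma a F(\x)+\m.\x\bigr)
$$
via the triangle inequality applied to \eqref{eq:SUM}, and then to estimate $|T(a)|$ by standard Weyl squaring. Since $|\psi(-\gamma a N)|=1$, it suffices to prove
$$
|T(a)|\leq |\calH_\calb/\Gb^n|^{1/2}|\calO/\Gb|^{n/2}
$$
uniformly for $a\in(\fo/\fb)^*$; summing over $a$ then gives the desired estimate.

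To bound $|T(a)|^2$, I would expand and substitute $\x=\x'+\h$ with $\h$ ranging modulo $\gfb$. Using the symmetry identity $F(\x'+\h)=F(\x')+2B(\x';\h)+F(\h)$, which follows from $c_{i,j,\tau,\tau'}=c_{j,i,\tau',\tau}$, one obtains
$$
|T(a)|^2=\sum_{\h\bmod\gfb}\psi\bigl(\gamma a F(\h)+\m.\h\bigr)\sum_{\x'\bmod\gfb}\psi\bigl(2\gamma a B(\x';\h)\bigr).
$$
The inner sum over $\x'$ is linear in $\x'$ (by the additivity of $B$ in its first argument), so it is a character sum on $\fo^n/\gfb^n$, provided the character is well-defined on the quotient. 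For this I would verify that $B(\x';\h)\in\fb$ whenever $\x'\in\gfb^n$ and $\h\in\fo^n$: expanding $B$, only indices $\tau\in G$ contribute, and the definition \eqref{eq:Gb} of $\gfb$ gives $\gfb^\tau\subset\fb$ for every $\tau\in G$, which forces each monomial into $\fb$. Coupled with $F(\z)\in\fb$ for $\z\in\gfb^n$ (since $\gfb^n\subset\cH_\fb$), this also shows the phase $\psi(\gamma aF(\h)+\m.\h)$ descends to a function on $\fo^n/\gfb^n$, using $\m\in\widehat{\gfb}^n$.

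The inner character is trivial on $\fo^n/\gfb^n$ precisely when $\psi(2\gamma a B(\x';\h))=1$ for every $\x'\in\fo^n$; since $(a,\fb)=1$ and $\sigma(z)=\psi(\gamma az)$ is primitive modulo $\fb$, this is equivalent to $2B(\x';\h)\in\fb$ for all $\x'\in\fo^n$, i.e.\ to $\h\in\calH_\fb$, by the characterisation \eqref{eq:define-H'}. Hence the inner sum equals $(\n\gfb)^n$ when $\h\in\calH_\fb$ and vanishes otherwise, giving
$$
|T(a)|^2=(\n\gfb)^n\sum_{\h\in\calH_\fb/\gfb^n}\psi\bigl(\gamma a F(\h)+\m.\h\bigr).
$$
Applying the triangle inequality to the remaining sum over $\h$ yields $|T(a)|^2\leq(\n\gfb)^n|\calH_\fb/\gfb^n|=|\fo/\gfb|^n|\calH_\fb/\gfb^n|$, and combining with the triangle bound $|S_\fb(N;\m)|\leq|(\fo/\fb)^*|\max_a|T(a)|$ completes the proof.

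The only delicate step is the verification that the inner sum over $\x'$ is genuinely a character sum on $\fo^n/\gfb^n$, and that its triviality is characterised by $\h\in\calH_\fb$ rather than some larger or smaller set. The key observation that makes both work is the identity $\gfb^\tau\subset\fb$ for $\tau\in G$, which is built into the very definition of the $G$-invariant ideal. Once this is in place the argument is routine Weyl squaring.
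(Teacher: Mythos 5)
Your proposal is correct and follows essentially the same route as the paper: Weyl squaring of the inner sum, recognising $\x'\mapsto\psi(2\gamma aB(\x';\h))$ as a character on $\fo^n/\gfb^n$, and identifying its triviality with $\h\in\cH_\fb$ via \eqref{eq:define-H'}. The only cosmetic difference is that you justify the step ``$\psi(2\gamma aB(\x';\h))=1$ for all $\x'$ iff $2B(\x';\h)\in\fb$ for all $\x'$'' by appealing to the primitivity of $\psi(\gamma a\cdot)$ modulo $\fb$ (which is valid, since the kernel of a primitive additive character mod $\fb$ is exactly $\fb$), whereas the paper unwinds $\gamma=g/\alpha$ with $(\alpha)=\fb\fd\fp_1$ explicitly; both give the same conclusion.
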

\begin{proof}
For fixed $a\in (\calO/\calb)^*$,  we have
\begin{equation*}
\begin{split}
\Bigg| \sum_{\x \bmod \Gb}\psi(\gam a&F(\x)+\bfm.\x)\Bigg|^2 \\
&= \sum_{\bfh \bmod \Gb}\sum_{\bfu \bmod \Gb} \psi (\gam a(F(\bfu+\bfh)-F(\bfu))+\bfm.\bfh)\\
&\leq \sum_{\bfh\bmod \Gb} \Bigg| \sum_{\bfu \bmod \Gb} \psi (2\gam a B(\bfu;\bfh))\Bigg|,
\end{split}
\end{equation*}
in the notation of \eqref{eq:bilinear}.
We observe that the function $\bfu \mapsto  \psi (2\gam a B(\bfu;\bfh))$
is a character modulo $\Gb^n$, and it is the trivial character precisely when
\[ 2\gam a B(\bfu;\bfh) \in \cald^{-1}, \qquad \forall \bfu \in (\fo/\Gb)^n.\]
We rewrite $\gam$ in the form $\gam=g/\alp$ with $(\alp)=\calb\cald\grp_1$ for some prime ideal $\grp_1$ and $g\in \grp_1\cap\Z$ with the property that $((g),\cald\Gb)=1$. 
 Thus the above condition is equivalent to the condition 
 \[ 2 g a B(\bfu;\bfh) \in (\alpha) \cald^{-1} = \calb \grp_1, \qquad \forall \bfu \in (\calO/\Gb)^n.\]
 Since $a \in ( \calO/\calb)^*$, $g\in \grp_1$ and $((g),\cald\Gb)=1$, this is equivalent to saying that 
  \[ 2 B(\bfu;\bfh) \in  \calb, \qquad \forall \bfu \in (\calO/ \Gb)^n.\]
Finally, since this condition on $\u$ is invariant modulo $\Gb^n$, 
this is equivalent to the condition
$2 B(\bfu;\bfh) \in  \calb$, for all  $\bfu \in \calO^n$, 
which is 
equivalent to specifying that $\bfh \in \calH_\calb$, by 
 \eqref{eq:define-H'}. The statement of the lemma  now follows.
\end{proof}

\begin{corollary}\label{cor-expsum1}
Assume that $F$ is admissible, in the sense of Definition  \ref{cond2}.
Let  $\calb$ be an integral ideal and let 
 $\bfm\in K^n$. Then 
$
S_\fb(N;\bfm)\ll  (\n\fb)^{1-n/2} (\n \gfb)^{n}.
$
\end{corollary}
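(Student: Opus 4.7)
The plan is to simply combine Lemma \ref{expsum1} with the admissibility estimate from Lemma \ref{lem:discriminant}, using the trivial bound on the unit group. First I would recall that Lemma \ref{expsum1} gives the unconditional bound
$$
|S_\mathfrak{b}(N;\m)| \leq |(\fo/\fb)^*|\,|\calH_\fb/\gfb^n|^{1/2}\,|\fo/\gfb|^{n/2}.
$$
Each of the three factors can now be controlled explicitly: $|(\fo/\fb)^*| \leq |\fo/\fb| = \n\fb$ trivially, and $|\fo/\gfb|^{n/2} = (\n\gfb)^{n/2}$ by definition of the ideal norm.

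For the middle factor I would invoke Lemma \ref{lem:discriminant}, whose hypothesis is exactly that $F$ be admissible in the sense of Definition \ref{cond2}. That lemma yields
$$
|\calH_\fb/\gfb^n|^{1/2} \ll \frac{(\n\gfb)^{n/2}}{(\n\fb)^{n/2}},
$$
with the implicit constant depending only on $F$.

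Multiplying the three estimates together then gives
$$
|S_\fb(N;\m)| \ll \n\fb \cdot \frac{(\n\gfb)^{n/2}}{(\n\fb)^{n/2}} \cdot (\n\gfb)^{n/2} = (\n\fb)^{1-n/2}(\n\gfb)^{n},
$$
which is the desired estimate. There is essentially no obstacle here: the corollary is a clean combination of the character-sum bound of Lemma \ref{expsum1} with the index estimate that admissibility affords through Lemma \ref{lem:discriminant}. The only minor point worth flagging is the dependence on the auxiliary ideal $\fd_2$ in Lemma \ref{lem:discriminant}, but since the inequality there holds for \emph{all} $\fb$ with an absolute constant $C_2$, it may be absorbed into the implied constant of the corollary, which is already allowed to depend on $F$.
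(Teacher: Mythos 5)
Your proposal is correct and is exactly the paper's argument: the authors also prove the corollary by combining Lemma \ref{expsum1} with the admissibility bound of Lemma \ref{lem:discriminant}, and the bookkeeping of the three factors is just as you describe.
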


\begin{proof}
This follows from combining  
Lemmas \ref{lem:discriminant} and~\ref{expsum1}.
\end{proof}

It  is straightforward to show that 
$S_\mathfrak{b}(N;\m)$ vanishes unless $\m$ satisfies additional constraints, as demonstrated in the following result. 

\begin{lemma}\label{lem:S-zero}
We have $S_\mathfrak{b}  (N;\m)=0$ unless 
$\m.\h\in \fd^{-1}$ for all $\h\in \mathcal{H}_\fb$. 
\end{lemma}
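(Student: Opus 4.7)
The plan is to exploit the translation invariance of the exponential sum $S_\fb(N;\m)$ under shifts by elements of $\mathcal{H}_\fb$. Starting from the defining expression in Lemma 4.1, I would fix an arbitrary $\h\in\mathcal{H}_\fb$ and make the change of variables $\a\mapsto\a+\h$; since $\h\in\fo^n$, this is a bijection on the residue system $(\fo/\gfb)^n$. The defining property \eqref{eq:define-H'} of $\mathcal{H}_\fb$ gives $F(\a+\h)\equiv F(\a)\pmod{\fb}$, and since $\sigma$ is a character modulo $\fb$ this yields $\sigma(F(\a+\h)-N)=\sigma(F(\a)-N)$. Combining this with the factorisation $\psi(\m\cdot(\a+\h))=\psi(\m\cdot\a)\,\psi(\m\cdot\h)$ and comparing with the original expression for $S_\fb(N;\m)$, I obtain the identity
$$S_\fb(N;\m)=\psi(\m\cdot\h)\,S_\fb(N;\m),$$
so $S_\fb(N;\m)\neq 0$ forces $\psi(\m\cdot\h)=1$ for every $\h\in\mathcal{H}_\fb$, equivalently $\tr_{K/\QQ}(\m\cdot\h)\in\ZZ$.

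To upgrade this pointwise vanishing to the claimed membership $\m\cdot\h\in\fd^{-1}=\hat\fo$, I would use the identity $(\m\cdot\h)\omega=\m\cdot(\omega\h)$ for $\omega\in\fo$ together with the characterisation $\hat\fo=\{\alpha\in K:\tr(\alpha\omega)\in\ZZ\text{ for all }\omega\in\fo\}$. This reduces matters to showing $\psi(\m\cdot(\omega\h))=1$ for every $\omega\in\fo$. For $\omega\in\gfb$ this is automatic, because $\omega\h\in\gfb\cdot\fo^n\subset\gfb^n\subset\mathcal{H}_\fb$, so the shift argument of the first paragraph applies directly. For general $\omega\in\fo$ one would reapply the same shift argument with $\omega\h$ in place of $\h$, which requires $\omega\h\in\mathcal{H}_\fb$.

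The principal obstacle is therefore to establish the $\fo$-module closure of $\mathcal{H}_\fb$. Since the generalised bilinear form $B$ defined in \eqref{eq:bilinear} involves Galois conjugates, $B(\a;\omega\h)$ is not simply $\omega\cdot B(\a;\h)$, so a priori $\mathcal{H}_\fb$ is only a $\ZZ$-module. Establishing the needed closure requires a careful manipulation of the sum defining $B(\a;\omega\h)$, exploiting the symmetry relations $c_{i,j,\tau,\tau'}=c_{j,i,\tau',\tau}$ on the coefficients and the fact that the inner sum over $\a\in(\fo/\gfb)^n$ still detects the ideal $\fb$ after the change of variables. Once this is granted, the conclusion $\m\cdot\h\in\fd^{-1}$ is immediate; every other step is a routine character manipulation.
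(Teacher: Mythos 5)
Your first paragraph is correct and is the paper's argument in an equivalent form: the paper splits the sum over $\a \bmod{\gfb}$ into cosets of $\cH_\fb/\gfb^n$, uses $F(\a+\h)\equiv F(\a)\bmod{\fb}$, and applies orthogonality of characters to the inner sum $\sum_{\h\in\cH_\fb/\gfb^n}\psi(\m.\h)$; your shift identity $S_\fb(N;\m)=\psi(\m.\h)\,S_\fb(N;\m)$ is the same computation. What either argument honestly delivers is that $S_\fb(N;\m)=0$ unless $\tr_{K/\QQ}(\m.\h)\in\ZZ$ for every $\h\in\cH_\fb$.

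The genuine gap is in your second and third paragraphs, and it is exactly where you say it is. Passing from $\tr_{K/\QQ}(\m.\h)\in\ZZ$ to $\m.\h\in\fd^{-1}$ requires $\tr_{K/\QQ}(\omega\,(\m.\h))=\tr_{K/\QQ}(\m.(\omega\h))\in\ZZ$ for all $\omega\in\fo$, hence that $\cH_\fb$ be an $\fo$-module; you leave this as an unproven assertion to be handled by ``careful manipulation'' of the symmetry relations, and in the generality in which the lemma is stated it is actually \emph{false}. For example, take $K=\QQ(\sqrt{2})$ and $F(X_1,X_2)=2X_1X_2+2X_1X_2^{\tau}$: then $2B(\a;\h)=2a_1\tr_{K/\QQ}(h_2)+2\tr_{K/\QQ}(a_2)h_1$, so for $\fb=\fp$ a degree-one prime above a large split prime $p$ the condition on $h_2$ in \eqref{eq:define-H'} is simply $p\mid \tr_{K/\QQ}(h_2)$; hence $(0,\sqrt{2})\in\cH_\fp$ but $\sqrt{2}\cdot(0,\sqrt{2})=(0,2)\notin\cH_\fp$ because $\tr_{K/\QQ}(2)=4$. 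So $\cH_\fb$ is in general only a $\ZZ$-module and your proposed repair cannot go through. (To be fair, the paper's own proof passes from triviality of the character on $\cH_\fb/\gfb^n$ to the condition $\m.\h\in\fd^{-1}$ equally silently; what makes everything work downstream is that the lemma is only ever invoked, in Corollary~\ref{cor:2}, with $\h$ ranging over genuine $\fo$-submodules of $\cH_\fb$ such as $\fb\bfe_i$, where the trace condition you did prove is literally the definition of $m_i\in\widehat{\fb}=\fd^{-1}\fb^{-1}$; and for the special forms \eqref{eq:special-0} one has $B(\a;\omega\h)=B(\omega\a;\h)$, so there $\cH_\fb$ genuinely is an $\fo$-module.) As a proof of the lemma as stated, your argument is therefore incomplete at the step you yourself flag as the principal obstacle; the correct fix is either to weaken the conclusion to $\tr_{K/\QQ}(\m.\h)\in\ZZ$, which your first paragraph already establishes and which suffices for all applications, or to restrict to forms for which the $\fo$-closure of $\cH_\fb$ can actually be verified.
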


\begin{proof}
Returning to the definition of $S_\fb(N;\m)$ in 
 Lemma \ref{lem:poisson} 
and noting that 
$\gfb^n\subset \cH_{\fb}\subset \fo^n$, we may write
$$
S_\mathfrak{b}  (N;\m)
=\sigstar\sum_{\ma{a}\in \fo^n/{\cH_{\fb}}} \sigma(-N )
\sum_{\ma{h}\in \cH_\fb/\gfb^n}
\sigma(F(\ma{a}))\e(\m. \ma{a})\e(\m. \ma{h}).
$$
However, orthogonality of characters gives
\[ \sum_{\bfh \in \calH_\calb/\Gb^n} \psi( \m.\bfh) = 
\begin{cases}
  |\calH_\calb/\Gb^n| & \text{if $ \bfm.\bfh \in \cald^{-1}~\forall \bfh \in \calH_\calb/\gfb^n$,} \\
 0 & \text{otherwise}.
\end{cases}
\]
Since we automatically have 
$\m.\h\in \fd^{-1}$ for any  
$\m \in 
 \widehat{\gfb  }^n$ and 
$\h\in \gfb^n$,   the statement of the lemma follows. 
\end{proof}

We shall also need to establish a multiplicativity property for the exponential sums.
 This is achieved in the following result. 

\begin{lemma}\label{lem:mult}
Let  $\calb$ be a non-zero integral ideal and 
suppose that 
 $\fb=\fb_1\fb_2$ for  integral ideals $\fb_1,\fb_2$, such that 
 $
\gcd (\n\fb_1,\n\fb_2)=1.
 $
 Then, for any $N\in \fo$ and 
 any $\m\in \widehat{\gfb}^n$, we have
$$S_\fb(N;\bfm)=
  S_{\fb_1}  (\bar{\n\fb_2}^2N;(\n\fb_2)\m)  S_{\fb_2}  (\bar{\n\fb_1}^2N;(\n\fb_1)\m).
$$
\end{lemma}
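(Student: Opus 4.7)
The plan is to effect a Chinese Remainder Theorem factorisation of both the $\a$-sum and the sum over primitive characters in \eqref{eq:SUM}, exploiting the fact that when a generalised quadratic form is evaluated on a rational-scalar linear combination, it expands via its associated generalised bilinear form $B$ from \eqref{eq:bilinear} exactly as in the classical quadratic case. The first task is to check that $\gfb=\gfb_1\gfb_2$ with $\gfb_1$ and $\gfb_2$ coprime, where $\gfb_j=\bigcap_{\tau\in G}\fb_j^{\tau^{-1}}$. Since $\gcd(\n\fb_1,\n\fb_2)=1$ the rational primes below $\fb_1$ and $\fb_2$ are disjoint, and Galois conjugation preserves the rational prime beneath a given prime ideal; hence the supports of $\fb_1^{\tau^{-1}}$ and $\fb_2^{\tau^{-1}}$ are disjoint for every $\tau$, giving coprimality of $\gfb_1$ and $\gfb_2$ and the factorisation $\gfb=\gfb_1\gfb_2$. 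The fact that $\n\fa\in\fa$ combined with the Galois-invariance of the ideal norm gives $\n\fb_j\in\gfb_j$, while $\n\fb_j$ is a unit modulo $\gfb_{3-j}$; accordingly the map $(\a_1,\a_2)\mapsto \n\fb_2\a_1+\n\fb_1\a_2$ is a bijection from $(\fo/\gfb_1)^n\times(\fo/\gfb_2)^n$ onto $(\fo/\gfb)^n$, which I adopt as the change of variables in the inner sum of $S_\fb(N;\m)$.

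The second step is the key algebraic identity. Because $\n\fb_1,\n\fb_2\in\ZZ$ are fixed by every element of $\Gal(K/\QQ)$, direct computation using the symmetry of $B$ yields
\[
F(\n\fb_2\a_1+\n\fb_1\a_2)=(\n\fb_2)^2 F(\a_1)+2\n\fb\cdot B(\a_1;\a_2)+(\n\fb_1)^2 F(\a_2),
\]
and the cross term lies in $\fb$ because $\n\fb\in\fb$, hence disappears once any character $\sigma$ modulo $\fb$ is applied. Similarly $\e(\m.\a)=\e(\n\fb_2\m.\a_1)\,\e(\n\fb_1\m.\a_2)$. Now I decompose $\sigma=\sigma_1\sigma_2$ through the isomorphism $\fo/\fb\cong\fo/\fb_1\oplus\fo/\fb_2$, with $\sigma_j$ a primitive character modulo $\fb_j$; the bijection between primitive characters modulo $\fb$ and pairs of primitive characters modulo $(\fb_1,\fb_2)$ converts $\sigstar_{\sigma\bmod\fb}$ into a double sum. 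Using that $(\n\fb_1)^2\in\fb_1$ kills the $\sigma_1$-component of $(\n\fb_1)^2F(\a_2)$, and symmetrically, I arrive at
\[
\sigma\bigl(F(\n\fb_2\a_1+\n\fb_1\a_2)-N\bigr)=\sigma_1\bigl((\n\fb_2)^2F(\a_1)-N\bigr)\,\sigma_2\bigl((\n\fb_1)^2F(\a_2)-N\bigr),
\]
so that $S_\fb(N;\m)$ factors into a product of two independent sums, one over $(\sigma_1,\a_1)$ and one over $(\sigma_2,\a_2)$.

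The final step is to twist each factor into the shape of an $S_{\fb_j}$. Since $(\n\fb_2)^2$ is a unit modulo $\fb_1$, the assignment $\sigma_1\mapsto\sigma_1'$ defined by $\sigma_1'(y)=\sigma_1((\n\fb_2)^2 y)$ is a bijection on primitive characters modulo $\fb_1$, under which $\sigma_1(-N)=\sigma_1'(-\bar{\n\fb_2}^{\,2}N)$ with $\bar{\n\fb_2}$ denoting the inverse of $\n\fb_2$ modulo $\fb_1$. This identifies the first factor with $S_{\fb_1}(\bar{\n\fb_2}^{\,2}N;(\n\fb_2)\m)$, and the second factor is obtained symmetrically; a quick check that $(\n\fb_j)\m\in\widehat{\gfb_{3-j}}^{\,n}$, which follows from $\m\in\widehat{\gfb}^n$ together with $\n\fb_j\in\gfb_j$, confirms that these exponential sums are well-defined. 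I expect the only genuinely care-requiring point to be the bookkeeping of the multiplicative adjustments $\bar{\n\fb_j}^{\,2}$: one must keep straight that $\bar{\n\fb_2}$ is the inverse modulo $\fb_1$ (not modulo $\fb_2$), and similarly that the reciprocity of the roles of $\fb_1$ and $\fb_2$ in the two factors is respected throughout the manipulation.
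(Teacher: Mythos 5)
Your proof is correct and follows essentially the same route as the paper: a Chinese-remainder splitting of both the residue sum (with the rational norms $\n\fb_1,\n\fb_2$ as CRT coefficients, using $\n\fb_j\in\gfb_j$) and the character sum, with the cross term killed because $\n\fb\in\fb$, followed by the unit twist that normalises $N$ and $\m$ into the shape $S_{\fb_1}(\bar{\n\fb_2}^2N;(\n\fb_2)\m)S_{\fb_2}(\bar{\n\fb_1}^2N;(\n\fb_1)\m)$. The only difference is presentational: the paper works throughout with an explicit primitive character $\psi(\gamma a\,\cdot)$ supplied by Lemma \ref{lem:orthogonal} and must verify via Lemma \ref{lem:denominator} that the twisted character $\psi(\gamma\mu b_2^2\,\cdot)$ is primitive modulo $\fb_1$, whereas you invoke the abstract bijection between primitive characters modulo $\fb$ and pairs of primitive characters modulo $\fb_1$ and $\fb_2$.
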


\begin{proof}
According to Lemma \ref{lem:orthogonal}, there exists $\gamma=g/\alpha \in \mathfrak{F}(\fb)$ such that 
$\psi(\gamma \cdot)$ is a primitive character modulo $\fb$. Then,  \eqref{eq:SUM} implies that 
$$
S_{\fb}  (N;\m)
=
\sum_{a\in (\fo/\fb)^*} \psi(-\gamma a N)
\sum_{\x\bmod{\gfb}}\psi\left( \gamma a F(\x)+   \m. \x\right).
$$
Let us write   $\n\fb_i=b_i$ for $i=1,2$. The 
assumption $\gcd (b_1,b_2)=1$ implies that 
$(\gfb_1,\gfb_2)=1$.
Moreover, we have 
$b_1\in \fb_1$, $b_2\in \fb_2$ and 
\begin{equation}\label{eq:toast}
((b_1),\fb_2)=((b_2),\fb_1)=1.
\end{equation}
According to Lemma \ref{lem:alg1}(i)
we find   elements $\lambda, \mu\in \fo$ such that $\ord_{\fp}(\lambda)=\ord_\fp(\fb_1)$ and $\ord_{\fp}(\mu)=\ord_\fp(\fb_2)$ for all $\fp \mid \gfb_1\gfb_2$. 
It follows from the Chinese remainder theorem, in the form
Lemma \ref{lem:skinner-3}, that we can write
 $a=\mu b+\lambda c$ 
 for $b \bmod{\fb_1}$ 
and $c \bmod{\fb_2}$. 
Likewise, we claim that we can write  $\x=b_2\b+b_1\c$, 
for $\b \bmod{\gfb_1}$ and 
$\c \bmod{\gfb_2}$.
To prove the claim it suffices to show that there is an isomorphism 
$\fo/\gfb_1\times \fo/\gfb_2\to \fo/\gfb$, given by $(u,v)\mapsto b_2u+b_1v$.
This map is clearly well-defined,  since $b_1\in \gfb_1$ and $b_2\in \gfb_2$.
Moreover, injectivity follows from the coprimality conditions 
$((b_2),\gfb_1)=((b_1),\gfb_2)=1$, which are a direct consequence of \eqref{eq:toast}.
The claim follows, since the cardinalities are the same, by the Chinese remainder theorem.

In summary, on observing that $b_1\in \gfb_1$ and $b_2\in \gfb_2$, it follows that 
\begin{align*}
 S_{\fb}(N;\m)
=~&\sum_{\substack{b\in (\fo/\fb_1)^*\\ c\in (\fo/\fb_2)^*}}
 \psi(-\gamma (\mu b+\lambda c) N)\\
 &\quad \times
\sum_{\substack{\b\bmod{\gfb_1}\\ \c\bmod{\gfb_2}}}
\e\left( \gamma (\mu b+\lambda c)F(b_2\b+b_1\c)+ \m.(b_2\b+b_1\c)\right)\\
=~&\sum_{b\in (\fo/\fb_1)^*} \psi(-\gamma \mu b N)\sum_{\b\bmod{\gfb_1}}\e\left( \gamma \mu b_2^2b F(\b)+b_2\m.\b\right) \\
&\quad\times \sum_{c\in (\fo/\fb_2)^*}\psi(-\gamma \lambda c N)\sum_{\c\bmod{\gfb_2}}
\e\left(  \gamma \lambda b_1^2 c F(\c)+ b_1\m.\c\right).
\end{align*}
We claim that $\psi(\gamma \mu b_2^2\cdot)$ defines a primitive character modulo $\fb_1$.
For this we note that 
$$
\beta\in \fa_{\gamma\mu b_2^2} \Leftrightarrow \gamma \mu b_2^2 \beta\in \fo \Leftrightarrow (g \mu b_2^2\beta)\subset (\alpha)\Leftrightarrow \fb_1\fd\mid (\mu b_2^2\fb_2^{-1})(g\fp_1^{-1})(\beta),
$$
since $b_2\in \fb_2$ and $g\in \fp_1$. Now $(g)$ is coprime to $\fb_1\fd$ and 
$(\mu b_2)$ is coprime to $\fb_1$. Thus it follows that 
$$
\beta\in \fa_{\gamma\mu b_2^2} \Leftrightarrow  \beta\in \fb_1\fe,
$$
where $\fe=\fd/(\fd,\mu b_2^2\fb_2^{-1})$. Clearly $\fe\mid \fd$.
We claim that $(\fd/\fe,\fb_1)=1$. To see this, note that 
$\fd/\fe$ is equal to the common divisor  $(\fd, \mu b_2^2 \fb_2^{-1})$. Now $\mu$ is coprime to $\fb_1$ and so is $\fb_2$. Hence the common divisor of these ideals most be coprime to the ideal $\fb_1$, as claimed.
Thus  
Lemma \ref{lem:denominator} 
establishes the claim that 
$\psi(\gamma \mu b_2^2\cdot)$ is a primitive character modulo $\fb_1$.
It follows that 
$$
\sum_{b\in (\fo/\fb_1)^*}
\psi(-\gamma \mu b N)
\sum_{\b\bmod{\gfb_1}}\e\left( \gamma \mu b_2^2b F(\b)+b_2\m.\b\right) 
=  S_{\fb_1}  (\bar{\n\fb_2}^2N;(\n\fb_2)\m),
$$
where $\bar{\n\fb_2}$ is the multiplicative inverse of 
$\n\fb_2$ modulo $\fb_1$.
Similarly,
$$ \sum_{c\in (\fo/\fb_2)^*}
\psi(-\gamma \lambda c N)
\sum_{\c\bmod{\gfb_2}}
\e\left(  \gamma \lambda b_1^2 c F(\c)+ b_1\m.\c\right)=  
S_{\fb_2}  (\bar{\n\fb_1}^2N;(\n\fb_1)\m),
$$
from which the lemma follows.
\end{proof}

\begin{corollary}\label{cor:mult}
Let  $\calb$ be a non-zero integral ideal and 
suppose that 
 $\fb=\fb_1\fb_2$ for  integral ideals $\fb_1,\fb_2$, such that 
 $
\gcd (\n\fb_1,\n\fb_2)=1.
 $
 Then 
$S_\fb(N;\0)=
  S_{\fb_1}  (N;\0)  S_{\fb_2}  (N;\0).
$
\end{corollary}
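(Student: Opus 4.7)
The plan is to apply Lemma \ref{lem:mult} with $\bfm = \0$, which yields
$$S_\fb(N; \0) = S_{\fb_1}(\overline{\n\fb_2}^2 N; \0) \, S_{\fb_2}(\overline{\n\fb_1}^2 N; \0).$$
It then suffices to prove the identity $S_{\fb_i}(u^2 M; \0) = S_{\fb_i}(M; \0)$ for each $i \in \{1,2\}$, any $M \in \fo$, and any rational integer $u$ coprime to $\n\fb_i$. Specialising to $u = \overline{\n\fb_j}$ (with $j = 3-i$) and $M = N$ will then yield the corollary. The multiplicative inverse $\overline{\n\fb_j}$ can be chosen in $\ZZ$ since $\gcd(\n\fb_j, \n\fb_i) = 1$ implies that the rational integer $\n\fb_j$ is invertible modulo $\fb_i \cap \ZZ$.

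To establish this auxiliary identity I would unfold the definition of $S_{\fb_i}(u^2 M; \0)$ via \eqref{eq:SUM} and perform two simultaneous changes of variables: $a \mapsto u^{-2} a$ on the sum over $a \in (\fo/\fb_i)^*$, and $\x \mapsto u \x$ on the inner sum over $\x \bmod \gfb_i$. The first is a bijection since $u$ is a unit modulo $\fb_i$. The second is a bijection on $\fo^n/\gfb_i^n$, because $u$ is also coprime to $\gfb_i$; this reduces to the observation that $\gfb_i \cap \ZZ = \fb_i \cap \ZZ$, since rational primes below $\fb_i^{\tau^{-1}}$ coincide with those below $\fb_i$ for every $\tau \in G$. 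Since $u \in \ZZ$ is fixed by every Galois automorphism, one has $F(u\x) = u^2 F(\x)$, so after substitution each character evaluation acquires the factor $u^{-2} u^2$. The congruence $u^{-2} u^2 \equiv 1 \pmod{\fb_i}$ combined with the fact that $\psi(\gamma_i \cdot)$ is a character modulo $\fb_i$ eliminates this factor, leaving precisely $S_{\fb_i}(M; \0)$.

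The main (modest) obstacle is confirming that the rational integrality of $u$ is indispensable: for a generalised quadratic form, the identity $F(\lambda \x) = \lambda^2 F(\x)$ fails when $\lambda$ is merely an element of $\fo$, since then $F(\lambda \x)$ contains terms $\lambda^{\tau}\lambda^{\tau'}$ with $\tau, \tau' \in \Gal(K/\QQ)$ not all equal. Taking $\lambda = u \in \ZZ$ sidesteps this difficulty, because $u^{\tau} = u$ for every Galois automorphism, so all such terms reduce to $u^2$.
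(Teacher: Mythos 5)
Your proposal is correct and follows exactly the paper's route: apply Lemma \ref{lem:mult} with $\m=\0$ and then remove the square factors via the identity $S_{\fb_i}(u^2M;\0)=S_{\fb_i}(M;\0)$ for $u\in\ZZ$ coprime to $\n\fb_i$, which the paper dispatches as an ``obvious change of variables'' in the $a$-sum and the $\x$-sum of \eqref{eq:SUM}. You have merely spelled out the details the paper omits, including the genuinely relevant point that $u$ must be a rational integer so that $F(u\x)=u^2F(\x)$ holds for a generalised quadratic form.
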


\begin{proof}
On making an obvious change of variables to the $a$-sum and the $\x$-sum in \eqref{eq:SUM}, we note that   
$S_\fb(c^2N;\0)=S_\fb(N;\0)$ for any $c\in \ZZ$ which is coprime to $\fb$. 
The statement now follows from an application of Lemma \ref{lem:mult}.
\end{proof}

\subsection{The exponential integral}\label{s:integrals}

In this section we discuss the exponential integral 
$I_\fb(t;\k)$  that appears in Lemma \ref{lem:poisson}, for given $t\in V$ and $\k\in V^n$.
It will be convenient to set 
$$
0<\rho=\frac{\n \mathfrak{b}}{P^d}\ll 1,
$$
with which notation we have
$$
I_\fb(t;\k)=\int_{ V^{n}}W(\x)h\left(\rho  ,|\nm(F(\x)-t)|\right)\e\left(-\k.\x \right)\d\x.
$$
We now bring into play the work in \cite[\S 6]{BV}. It follows from an application of Fourier inversion, as in \cite[Eq.~(6.3)]{BV}, that there exists a function $p_\rho(v):V\to \CC$ such that 
\begin{equation}\label{eq:integral-step1}
I_\fb(t;\k)=\int_{ V} p_\rho(v) \psi(-vt) K(v,\k)
 \d v,
\end{equation}
where
\begin{equation}\label{eq:integral-K}
K(v,\k)=\int_{ V^{n}}W(\x)\e\left(vF(\x)-\k.\x \right)\d\x.
\end{equation}

In our analysis it will be useful to have the notion of a height function on  $V$. 
Accordingly, we define $\mathfrak{H}:V\to \RR_{\geq 1}$ via
$$
\mathfrak{H}(v)=\prod_{l=1}^d \max\{1,|v_l|\},
$$
for $v=(v_1,\dots,v_d)\in V$.
In the closing stages of our argument we will need to estimate integrals involving 
powers of $\mathfrak{H}(v)$ over 
various regions in $V$. 
First, it  follows from  \cite[Lemma 5.3]{BV} that 
\begin{equation}
\label{eq:BoundC}
\int_V \mathfrak{H}(v)^{\alpha}\d v \ll 1 \quad \text{ if $\alpha<-1$}.
\end{equation}
We can use this to deduce two further bounds that will play  important roles.

For any $A\geq 1$ and $\ve>0$, we claim that
\begin{equation}
\label{eq:BoundA}
\int_{\{v\in V: \mathfrak{H}(v)\geq A\}} \mathfrak{H}(v)^{\alpha}\d v \ll A^{\alpha+1+\ve} \quad \text{ if $\alpha<-1$}.
\end{equation}
If $\alpha<-1$ then we can clearly  assume  that $\ve<-\alpha-1$. But then the conditions of integration imply that  $(\mathfrak{H}(v)/A)^{-\alpha-1-\ve}\geq 1$, whence
$$
\int_{\{v\in V: \mathfrak{H}(v)\geq A\}} \mathfrak{H}(v)^{\alpha}\d v \leq A^{\alpha+1+\ve} 
\int_{V} \mathfrak{H}(v)^{-1-\ve}\d v \ll A^{\alpha+1+\ve} ,
$$
by \eqref{eq:BoundC}.

Next, for any $B\geq 1$ and $\ve>0$, we claim that  
\begin{equation}
\label{eq:BoundB}
\int_{\{v\in V: \mathfrak{H}(v)\leq B\}} \mathfrak{H}(v)^{\alpha} \d v \ll B^{\alpha+1+\ve} \quad \text{ if $
\alpha\geq -1$}.
\end{equation}
To see this we note that 
$(B/\mathfrak{H}(v))^{\alpha+1+\ve}\geq 1$, under the conditions of the integral,
if $\alpha\geq -1$. But then 
$$
\int_{\{v\in V: \mathfrak{H}(v)\leq B\}} \mathfrak{H}(v)^{\alpha} \d v \leq B^{\alpha+1+\ve} 
\int_{V} \mathfrak{H}(v)^{-1-\ve}\d v  \ll B^{\alpha+1+\ve} ,
$$
by \eqref{eq:BoundC}.

Returning to the function 
$p_\rho(v)$ in \eqref{eq:integral-step1}, 
the following result summarises 
its key properties
 and is extracted from \cite[Lemmas 6.3 and 6.4]{BV}.

\begin{lemma}
For any $\ve>0$, 
we have 
$
p_\rho(v)\ll  P^\ve,
$
for any $v\in V$. Moreover, for any $\ve>0$ and 
 $A\geq 1$, we have 
$$
p_\rho(v)\ll_A \rho^{-1} \left(\rho^{-1}P^\ve \mathfrak{H}(v)^{-1}\right)^A.
$$
\end{lemma}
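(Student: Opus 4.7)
The plan is to adapt the two bounds of \cite[Lemmas~6.3 and~6.4]{BV} to the present setting, with some minor bookkeeping to absorb logarithmic losses into the factor $P^\varepsilon$. By construction (compare \cite[Eq.~(6.3)]{BV}), $p_\rho$ is characterised via Fourier inversion on $V\cong\RR^d$ by the identity
\begin{equation*}
h(\rho,|\nm(\alpha)|) = \int_V p_\rho(v)\, \psi(v\alpha)\,\d v, \qquad \alpha\in V,
\end{equation*}
so that $p_\rho$ is the inverse Fourier transform over $V$ of the function $\alpha\mapsto h(\rho,|\nm(\alpha)|)$.

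For the first bound I would invoke the trivial $L^1$ estimate
\begin{equation*}
|p_\rho(v)|\leq \int_V |h(\rho,|\nm(\alpha)|)|\,\d\alpha.
\end{equation*}
The pointwise bound $h(\rho,y)\ll \rho^{-1}$ together with the support condition ``$h(\rho,y)\neq 0$ only if $\rho\leq\max\{1,2|y|\}$'' confines the effective range of $\alpha$, and since $\rho\ll 1$ this reduces matters to estimating the measure of a sublevel set of $|\nm(\alpha)|$. The logarithmic contribution arising from neighbourhoods of the coordinate hyperplanes $\alpha_l=0$ is absorbed into the factor $P^\varepsilon$, exactly as in \cite[Lemma~6.3]{BV}.

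For the second bound I would carry out $A$-fold integration by parts against the oscillatory kernel $\psi(v\alpha)=\psi(\sum_l v_l\alpha_l)$. Writing $S=\{l:|v_l|\geq 1\}$ and distributing the $A$ derivatives optimally among the coordinates in $S$ produces a gain of $\prod_{l\in S}|v_l|^{-A_l}\ll \mathfrak{H}(v)^{-A}$, at the cost of $A$ derivatives falling on $h(\rho,|\nm(\alpha)|)$. Since $h$ is constructed so that $\partial_y^j h(\rho,y)\ll \rho^{-j-1}$, these derivatives contribute in total a factor $\rho^{-A-1}$, yielding the bound asserted. The main technical obstacle is the chain-rule factor $\partial_{\alpha_l}|\nm(\alpha)|$, which degenerates near the hyperplanes $\alpha_l=0$; following \cite[Lemma~6.4]{BV}, I would circumvent this by splitting the $\alpha$-integration into a thin neighbourhood of those hyperplanes (handled trivially) and its complement (where integration by parts proceeds cleanly), then optimising the cutoff scale to produce precisely the stated $P^\varepsilon$ loss.
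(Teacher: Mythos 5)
The paper does not actually prove this lemma: it is quoted verbatim from \cite[Lemmas~6.3 and~6.4]{BV}, so what you are proposing is a reconstruction of the Browning--Vishe proofs. Your architecture (Fourier inversion, an $L^1$ bound for the first estimate, repeated integration by parts for the second) has the right shape, but both halves contain genuine gaps. For the first bound, the two properties of $h$ you invoke do not suffice, and the step ``this reduces matters to estimating the measure of a sublevel set of $|\nm(\alpha)|$'' fails outright. Since $\rho=\n\fb/P^d\ll 1$, the condition $\rho\leq\max\{1,2|y|\}$ holds for \emph{every} $y$, so the support condition restricts nothing; and the sublevel sets $\{\alpha\in V:|\nm(\alpha)|\leq c\}$ have infinite Lebesgue measure for $d\geq 2$ (they contain an unbounded neighbourhood of the coordinate hyperplanes), so $\int_V|h(\rho,|\nm(\alpha)|)|\,\d\alpha$ diverges and the trivial $L^1$ bound is simply not available. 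What is actually needed is, first, the observation that only the bounded range $|F(\x)-t|\ll 1$ ever occurs, so that one may insert a compactly supported cutoff in $\alpha$ before taking the Fourier transform (this is precisely how the class of functions in \cite[\S 6]{BV} is set up), and second, the genuine decay of $h(x,y)$ in $y$ beyond the bare bound $h\ll x^{-1}$; the factor $P^\ve$ then arises from the logarithmic measure of $\{|\alpha|\ll 1:|\nm(\alpha)|\leq T\}$.

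For the second bound, the bookkeeping does not close. With $\sum_{l\in S}A_l=A$ the gain from integration by parts is $\prod_{l\in S}|v_l|^{-A_l}$, which is in general much \emph{larger} than $\mathfrak{H}(v)^{-A}=\prod_{l\in S}|v_l|^{-A}$ (take $d=2$, $|v_1|=|v_2|=2$, $A=2$: one gets $1/4$ versus the required $1/16$). To win the full factor $\mathfrak{H}(v)^{-A}$ one must perform integrations by parts of order $A$ in \emph{each} large coordinate, and then the naive cost is a factor $\rho^{-A}$ for each such coordinate rather than $\rho^{-A}$ in total, which overshoots the exponent in the statement. Recovering the stated bound requires exploiting the specific structure $\partial_{\alpha_l}|\nm(\alpha)|=\pm\nm(\alpha)/\alpha_l$ on the (compact) domain of integration together with the precise bounds on the $y$-derivatives of $h$, as in \cite[Lemma~6.4]{BV}; as written, your derivative count would not yield the exponent $\rho^{-1}\bigl(\rho^{-1}P^\ve\mathfrak{H}(v)^{-1}\bigr)^A$.
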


Recall here that $\rho>0$. 
The next result is a straightforward consequence of the previous result, once combined with 
\eqref{eq:integral-step1} and the bound 
$$
|K(v,\k)|\leq \int_{ V^{n}}W(\x)\d\x\ll 1,
$$
which follows from the fact that $W$ is compactly supported.

\begin{corollary}\label{cor:integral-bound}
Let $\ve>0$.
Let  $t\in V$ and $\k\in V^n$. Then 
$$
I_\fb(t;\k)\ll_A P^\ve \int_{ \mathcal{U}} |K(v,\k)|
 \d v  +P^{-A},
$$
for any $A\geq 1$, where
$$
\mathcal{U}=\mathcal{U}_\ve=\left\{v\in V: 
\mathfrak{H}(v)\leq \frac{P^{d+\ve}}{\n\fb}\right\}.
$$
\end{corollary}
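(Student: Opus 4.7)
The plan is to start from the Fourier-inversion expression \eqref{eq:integral-step1}, namely
$$
I_\fb(t;\k)=\int_{V} p_\rho(v)\,\psi(-vt)\, K(v,\k)\,\d v,
$$
and split the domain of integration according to whether $v$ lies in $\mathcal{U}$ or its complement. Since $|\psi(-vt)|=1$ throughout, the matter reduces to bounding $p_\rho(v) K(v,\k)$ in each region using the two estimates supplied by the previous lemma.

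On the set $\mathcal{U}$ I would simply invoke the uniform bound $p_\rho(v)\ll P^{\ve}$ (with the same $\ve$ as in the corollary), which yields immediately
$$
\left|\int_{\mathcal{U}} p_\rho(v)\,\psi(-vt)\, K(v,\k)\,\d v\right|
\ll P^{\ve}\int_{\mathcal{U}}|K(v,\k)|\,\d v,
$$
which is the main term in the claimed inequality.

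The non-trivial step is to show that the integral over $V\setminus\mathcal{U}=\{v\in V:\mathfrak{H}(v)>\rho^{-1}P^{\ve}\}$ contributes $O(P^{-A})$ for any $A\geq 1$. Here I would apply the second bound from the previous lemma with a parameter $\eta>0$, say $\eta=\ve/2$, and with an auxiliary large exponent $A_0\geq 2$ to be chosen: this gives
$$
|p_\rho(v)|\ll_{A_0} \rho^{-1}\bigl(\rho^{-1}P^{\eta}\mathfrak{H}(v)^{-1}\bigr)^{A_0}
$$
for all $v\in V$. Combined with the trivial bound $|K(v,\k)|\ll 1$ and the tail estimate \eqref{eq:BoundA} (applicable since $A_0>1$ and the cutoff is $B=\rho^{-1}P^{\ve}$), one obtains, for any $\ve'>0$,
$$
\int_{V\setminus\mathcal{U}}|p_\rho(v)K(v,\k)|\,\d v
\ll \rho^{-1}\bigl(\rho^{-1}P^{\eta}\bigr)^{A_0}\bigl(\rho^{-1}P^{\ve}\bigr)^{-A_0+1+\ve'}
= \rho^{-2-\ve'}\,P^{(\eta-\ve)A_0+\ve(1+\ve')}.
$$

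The conclusion then follows by choosing $A_0$ sufficiently large. Indeed, since $\rho=\n\fb/P^d\leq 1$, we have $\rho^{-2-\ve'}\leq P^{d(2+\ve')}$, so the tail is
$$
\ll P^{d(2+\ve')+\ve(1+\ve')+(\eta-\ve)A_0}=P^{d(2+\ve')+\ve(1+\ve')-(\ve/2)A_0}.
$$
Given $A\geq 1$, one selects $A_0=A_0(A,\ve,d)$ large enough so that the exponent above is at most $-A$, which gives the required $P^{-A}$ bound and completes the argument. The only real subtlety lies in tracking the $\rho^{-1}$ powers and ensuring that the exponential saving in $A_0$ coming from the region $\mathcal{U}^c$ outstrips the polynomial blow-up in $\rho^{-1}$; this is guaranteed because $\eta<\ve$ provides a strict saving per unit of $A_0$.
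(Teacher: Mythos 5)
Your proposal is correct and follows exactly the route the paper intends: the paper itself describes the corollary as a direct consequence of \eqref{eq:integral-step1}, the two bounds on $p_\rho(v)$, the trivial bound $|K(v,\k)|\ll 1$, and the tail estimate \eqref{eq:BoundA}, which is precisely your decomposition into $\mathcal{U}$ and its complement. The bookkeeping with $\eta=\ve/2$ and the choice of $A_0$ to absorb the $\rho^{-2-\ve'}\le P^{d(2+\ve')}$ loss is carried out correctly.
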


It is interesting to pause and reflect on the corresponding situation for cubic forms $G$ over a number field $K$ that was considered in \cite{BV}, recalling that we are assuming $K$ to be totally real in our setting. In \cite{BV}, crucial  use was made of the fact that 
the   integral over $\x$ factors as 
$$
\prod_{1\leq l\leq d} \int_{\RR^{n}}W^{(l)}(\x^{(l)})e\left(v^{(l)}G^{(l)}(\x^{(l)})-\k^{(l)}.\x^{(l)} \right)\d\x^{(l)},
$$
since 
$\tr (vG(\x) )=\sum_{l=1}^{d}v^{(l)}G^{(l)} (\x^{(l)})$, 
where
$G^{(l)}=\rho_l(G)$ is a cubic form over $\RR$. 
We have chosen our main example \eqref{eq:special-0} in order that a similar property holds. 
Such a factorisation is not necessarily
enjoyed for arbitrary 
generalised quadratic forms $F$, however,  and it seems very difficult  to analyse the integrals $K(v,\k)$ in generic situations.

Define
\begin{equation}\label{eq:real-Q}
\mathcal{Q}(\x^{(1)},\dots,\x^{(d)})=
\sum_{1\leq l \leq d} v_l F^{(l)}(\x^{(1)},\dots,\x^{(d)}),
\end{equation}
for fixed $v\in V$, 
where $F^{(l)}$ is the  quadratic form \eqref{eq:Ql}.
Thus $\mathcal{Q}$ 
is a  quadratic form over $\RR$ in $dn$ variables.
Let us write, temporarily, $\underline\x=(\x^{(1)},\dots,\x^{(d)})$ and 
$
\underline\k=(\k^{(1)},\dots,\k^{(d)}).
$
Then, in the light of \eqref{eq:sol},  we may write
\begin{equation}\label{eq:K(um)}
K(v,\k)
=\int_{ \RR^{dn}}W(\underline\x)
e\left(\mathcal{Q}(\underline\x)-\underline\k.\underline\x\right)\d\underline\x.
\end{equation}
A general study of these exponential integrals has been carried out by 
Heath-Brown and Pierce \cite[Lemma 3.1]{HBP}. Assuming that the support of $W$ is contained in $[-1,1]^{dn}$, we may  appeal to their work, which we record here for the convenience of the reader.

\begin{lemma}\label{lem:HBP}
Let $\mathcal{Q}\in \RR[X_1,\dots,X_m]$ be a quadratic form with  
 coefficients of maximum modulus $\|\cQ\|$ and 
 eigenvalues $\rho_1,\dots,\rho_m$.
Let $\bla\in \RR^m$ and suppose that $w:\RR^m\to \RR$ is any smooth weight function supported on $[-1,1]^m$. Then 
$$
\int_{\RR^m} w(\u) e\left(\cQ(\u)-\bla.\u\right) \d \u \ll _w \prod_{i=1}^m \min\left\{1,|\rho_i|^{-1/2}\right\}.
$$
Furthermore, if $|\bla|\geq 4\|\cQ\|$ then the integral is 
$O_{w,A} (|\bla|^{-A})$ 
for any $A\geq 1$.
\end{lemma}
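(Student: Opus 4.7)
The plan is to follow the standard oscillatory-integral strategy: diagonalise $\cQ$ and then apply van der Corput (or equivalently, integration by parts) coordinate by coordinate. This is exactly the approach of \cite[Lemma 3.1]{HBP}, and I would essentially import their argument.

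For the second assertion, under the hypothesis $|\bla|\geq 4\|\cQ\|$, I examine the phase $\varphi(\u)=\cQ(\u)-\bla\cdot\u$. The coefficients of $\cQ$ have modulus at most $\|\cQ\|$ and $w$ is supported in $[-1,1]^m$, so on this support each component of $\nabla\cQ(\u)$ is bounded by $C_m\|\cQ\|$ for some $C_m$ depending only on $m$. Since $w$ implicitly fixes the dimension $m$, this constant is absorbed by the $w$-dependent implied constant, and the hypothesis forces $|\nabla\varphi(\u)|\gg |\bla|$ on $\operatorname{supp}(w)$. Repeated integration by parts with the operator $L=(2\pi i)^{-1}|\nabla\varphi|^{-2}(\nabla\varphi)\cdot\nabla$ applied $A$ times, using $L e(\varphi)=e(\varphi)$ and transferring derivatives onto $w/|\nabla\varphi|^{2}$, produces the bound $O_{w,A}(|\bla|^{-A})$.

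For the main bound, write $\cQ(\u)=\tfrac12\u^{T} M\u$ for the symmetric matrix $M$ of $\cQ$, and choose an orthogonal $O$ with $O^{T}MO=\operatorname{diag}(\rho_1,\dots,\rho_m)$. The substitution $\u=O\v$ preserves Lebesgue measure and converts the integral to
$$
\int_{\RR^m}\tilde w(\v)\,e\!\left(\tfrac12\sum_{i=1}^m\rho_i v_i^{2}-\boldsymbol{\mu}\cdot\v\right)d\v,
$$
where $\tilde w(\v)=w(O\v)$ remains smooth and compactly supported in $\{|\v|\leq\sqrt m\}$ and $\boldsymbol{\mu}=O^{T}\bla$. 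A fixed partition of unity subordinate to a cover of this support by unit boxes reduces matters to $\tilde w$ supported in a single bounded box.

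Now the phase splits as $\sum_i(\tfrac{\rho_i}{2}v_i^{2}-\mu_i v_i)$ across coordinates, so I estimate the integral one variable at a time via Fubini. For any coordinate $i$ with $|\rho_i|\geq 1$, completing the square reduces the $v_i$-integral to an oscillatory integral with quadratic phase of second derivative $\rho_i$ against a smooth compactly supported weight; van der Corput's lemma then gives a bound $O(|\rho_i|^{-1/2})$, with implied constant controlled by a fixed seminorm of $\tilde w$ in the $v_i$ direction. For $|\rho_i|<1$ I use the trivial bound $O(1)$. Multiplying the one-dimensional bounds yields $\prod_{i=1}^m\min\{1,|\rho_i|^{-1/2}\}$. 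The main technical point is the uniformity of this one-dimensional estimate as the remaining coordinates vary; this follows because all relevant $C^k$-seminorms of $\tilde w$ are bounded in terms of those of $w$ and of the fixed rotation $O$. Full details are in \cite[Lemma 3.1]{HBP}.
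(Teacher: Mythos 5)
The paper offers no proof of this lemma at all: it is imported verbatim from \cite[Lemma 3.1]{HBP} ``for the convenience of the reader'', so your sketch is already more detailed than anything in the text. Your treatment of the main bound --- orthogonal diagonalisation, then the weighted one-dimensional second-derivative test applied coordinate by coordinate, with uniform control of the seminorms of the partially integrated weight in the frozen variables --- is the standard argument and is essentially what Heath-Brown and Pierce do.

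The second assertion is where your argument has a genuine gap. You bound $|\nabla\cQ(\u)|\le C_m\|\cQ\|$ on $[-1,1]^m$ and then assert that ``this constant is absorbed by the $w$-dependent implied constant, and the hypothesis forces $|\nabla\varphi(\u)|\gg|\bla|$''. A constant occurring in a \emph{hypothesis} cannot be absorbed into the implied constant of a \emph{conclusion}: what you actually obtain is $|\nabla\varphi(\u)|\ge|\bla|-C_m\|\cQ\|$, and since each partial derivative $2\sum_j c_{kj}u_j$ can be as large as $2m\|\cQ\|$ on the unit cube, the threshold $|\bla|\ge 4\|\cQ\|$ does not make this quantity positive once $m>2$. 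In that regime the phase can have an interior stationary point, where the integral has size comparable to $\prod_i\min\{1,|\rho_i|^{-1/2}\}$ rather than $O_A(|\bla|^{-A})$, so the non-stationary-phase step fails as written. To make the argument work you need $|\bla|$ to exceed $\sup_{\u\in[-1,1]^m}|\nabla\cQ(\u)|$, i.e.\ a dimension-dependent threshold such as $|\bla|\ge 4m\|\cQ\|$ (or whatever normalisation of $\|\cQ\|$ is used in \cite{HBP}); you should either verify the precise constant there or note that the paper only ever invokes this clause in the form ``$|\k|\gg|v|$ implies rapid decay'', for which your integration-by-parts argument is correct once the implied constant is taken large enough in terms of the dimension.
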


We will apply this result with 
$\bla=\underline\k$ and with
the real quadratic form  in \eqref{eq:real-Q}.
Note that $\|\cQ\|\ll | v|$.
Next, define 
$$
\cF(v)=\det \left(
\sum_{1\leq l \leq d} v_l \mathbf{M}^{(l)}\right),
$$
where $\mathbf{M}^{(l)}$ is the $dn\times dn$ matrix associated to  $F^{(l)}$.
The function $\cF(v)$ is a real form of degree $dn$ 
in the variables $v_1,\dots,v_d$. The  following estimate is a direct consequence of  Lemma \ref{lem:HBP}.

\begin{corollary}
Assume  $| \k| \gg | v|$. Then 
$
K(v,\k)\ll_A  |\k|^{-A},
$
for any  $A\geq 1$. Moreover, 
$
K(v,\k)\ll \min\{1,|\cF(v)|^{-1/2}\}
$, for any $\k\in V^n$.
\end{corollary}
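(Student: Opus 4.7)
The plan is to reduce directly to Lemma \ref{lem:HBP} via the representation \eqref{eq:K(um)}, which expresses $K(v,\k)$ as an integral over $\RR^{dn}$ against the real quadratic form $\cQ$ from \eqref{eq:real-Q}. By construction the matrix of $\cQ$ is $\sum_{l=1}^d v_l \mathbf{M}^{(l)}$, so its determinant equals $\cF(v)$, and hence its eigenvalues $\rho_1, \ldots, \rho_{dn}$ satisfy $\prod_i \rho_i = \cF(v)$. Furthermore its coefficients are dominated by $\|\cQ\| \ll |v|$, with an implied constant depending only on $F$ and $K$, and the linear form $\underline\k \in \RR^{dn}$ obtained by unfolding $\k \in V^n$ satisfies $|\underline\k| = |\k|$.

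For the first assertion, I would take the implied constant in the hypothesis $|\k| \gg |v|$ sufficiently large so that $|\underline\k| \geq 4 \|\cQ\|$ holds. The second part of Lemma \ref{lem:HBP} then delivers the bound $K(v,\k) \ll_A |\underline\k|^{-A} = |\k|^{-A}$ for every $A \geq 1$, as claimed.

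For the second assertion, the bound $K(v,\k) \ll 1$ is immediate since $W$ is compactly supported. The estimate $K(v,\k) \ll |\cF(v)|^{-1/2}$ would follow from the first part of Lemma \ref{lem:HBP}, which gives
$$
|K(v,\k)| \ll \prod_{i=1}^{dn} \min\bigl\{1, |\rho_i|^{-1/2}\bigr\}.
$$
Since each factor equals $|\rho_i|^{-1/2}$ when $|\rho_i| \geq 1$ and equals $1$ otherwise, one has
$$
\prod_{i=1}^{dn} \min\bigl\{1, |\rho_i|^{-1/2}\bigr\} = \prod_{|\rho_i|\geq 1} |\rho_i|^{-1/2} \leq \Bigl(\prod_{|\rho_i|\geq 1}|\rho_i|\Bigr)^{-1/2} \leq |\cF(v)|^{-1/2},
$$
where the last inequality uses $\prod_i |\rho_i| = |\cF(v)|$ together with $\prod_{|\rho_i|<1} |\rho_i| \leq 1$. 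Combining with $K(v,\k)\ll 1$ yields $K(v,\k) \ll \min\{1,|\cF(v)|^{-1/2}\}$.

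The only technical point I anticipate is that Lemma \ref{lem:HBP} requires the weight to be supported on $[-1,1]^{dn}$, whereas $W$ is supported on a small neighbourhood of $\mathbf{W}\underline\bxi$ of size $\delta$; however this is absorbed by a fixed affine change of variables whose Jacobian and whose action on the quadratic form and the linear form depends only on the fixed data $\mathbf{W}, \underline\bxi, \delta$ from \S\ref{s:W}. This transformation rescales $\cF(v)$ by a non-zero constant and $\underline\k$ by a comparable vector, so both final bounds persist up to an enlargement of the implied constants, which we allow to depend on $F$, $K$, $W$. I do not expect any genuine obstacle beyond this bookkeeping.
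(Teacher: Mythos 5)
Your proof is correct and follows exactly the route the paper intends: the paper derives this corollary as a "direct consequence" of Lemma \ref{lem:HBP} applied to the representation \eqref{eq:K(um)}, using $\|\cQ\|\ll|v|$ for the first bound and the fact that the product of the eigenvalues of $\sum_l v_l\mathbf{M}^{(l)}$ equals $\cF(v)$ for the second. Your explicit handling of the eigenvalue product and of the support normalisation is just a careful write-up of steps the paper leaves implicit.
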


Unfortunately, 
it appears difficult to  extract anything useful from the second  bound, unless the generalised quadratic form is assumed to have extra structure.

\subsection{Contribution from the trivial character}\label{s:trivial}

In this  section we study the overall contribution 
from the vector $\m=\0$ in  the expression for 
$\No$ in Lemma \ref{lem:poisson}. This contribution is 
$$
M(P)=
\frac{P^{(n-2)d}}{D_{K}^{n/2}} 
\sum_{
\substack{0\neq \fb\subset \fo\\
\n\mathfrak{b}\ll P^d} }
(\n \gfb  )^{-n}
S_\mathfrak{b}  (N;\0)
I_\mathfrak{b}  (N/P^2;\0) 
+O_A(P^{-A}),
$$
in the notation of that result. 

It will ease notation if we put $t=N/P^2\in \RR$. 
Assuming that the descended system has codimension $d$, 
we begin by analysing the exponential integral $I_\fb(t;\0)$, writing 
\begin{equation}\label{eq:zug}
I_\mathfrak{b}  (t;\0)
=
\int_{ V^{n}}W(\x)h\left(\rho  ,|\nm(F(\x)-t)|\right)\d\x
=\int_{ V} f(v)h\left(\rho  ,|\nm(v)|\right)\d\v,
\end{equation}
where $\rho=\n\fb/P^d$ and 
$$
f(v)=\int_{\substack{\x\in V^n\\ F(\x)-t=v}} W(\x)\d \x,
$$
where, by an abuse of notation,  $\d \x$ is the surface measure
 obtained by eliminating $d$ variables  from the equation
$F(\x)-t=v$. 
We shall think of $f(y)$ as a function of $\mathbf{y}=(y_1,\dots,y_d)$ on $\RR^d$, in which $t$ is fixed and bounded absolutely.
The following result summarises its main properties. 

\begin{lemma}\label{lem:I0}
Assume that the descended system has codimension $d$.
There exist positive  constants $C,C_0,C_1,\dots$ such that 
the function $f: \RR^d\to \RR$ is  a smooth weight function 
that is supported on $[-C,C]^d$ and satisfies 
$$
\left|\frac{\partial^{i_1+\cdots+i_d}}{\partial y_1^{i_1}\cdots \partial y_d^{i_d} }f (\y)\right| \leq C_{i_1+\cdots+i_d}
$$
for any $\y\in [-C,C]^d$ and any $i_1,\dots,i_d\geq 0$.
The constants $C,C_0,C_1,\dots$ depend only on the coefficients of $F$ and the parameter $\delta$ in the definition of $W$.
\end{lemma}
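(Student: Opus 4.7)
The plan is to reduce $f$ to a fibre integral in $\underline\u$-coordinates, use the codimension-$d$ hypothesis to invoke the inverse function theorem and convert $f$ into an ordinary Euclidean integral, and then read off the three assertions directly. Passing to $\underline\u = (\u_1, \ldots, \u_d) \in \RR^{dn}$ via $\x = \omega_1 \u_1 + \cdots + \omega_d \u_d$, and writing $v = y_1 \omega_1 + \cdots + y_d \omega_d$ and $t = t_1 \omega_1 + \cdots + t_d \omega_d$ with respect to the $\ZZ$-basis of $\fo$, the identity \eqref{eq:F-to-Q} turns $F(\x) - t = v$ into the system $Q_i(\underline\u) = t_i + y_i$ for $1 \le i \le d$, while the weight pulls back to $W_*(\underline\u) = W(\mathbf{W}\underline\u)$, smooth and supported in $|\underline\u - \underline\bxi| \le \delta$. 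Compact support of $f$ is then immediate: the continuous map $\underline\u \mapsto Q(\underline\u) - \t$ sends the compact set $\mathrm{supp}\,W_*$ into a bounded subset of $\RR^d$, which yields the constant $C > 0$ in the statement.

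For smoothness, the codimension-$d$ hypothesis supplies a non-singular real point $\underline\bxi$ at which $J_{Q_1, \ldots, Q_d}$ has rank $d$. By continuity, the rank is still $d$ on some neighbourhood of $\underline\bxi$, and I fix $\delta$ once and for all small enough that this neighbourhood contains $\mathrm{supp}\,W_*$. After relabelling coordinates if necessary, the first $d$ columns of $J_{Q_1, \ldots, Q_d}(\underline\bxi)$ may be taken linearly independent, whence
\[
\Psi(\underline\u) = \bigl(Q_1(\underline\u), \ldots, Q_d(\underline\u), u_{d+1}, \ldots, u_{dn}\bigr)
\]
defines, by the inverse function theorem, a diffeomorphism from a neighbourhood of $\underline\bxi$ (shrink $\delta$ further if needed) onto its image. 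Changing variables $\underline\u = \Psi^{-1}(\y + \t, u')$ with $u' = (u_{d+1}, \ldots, u_{dn})$ in the fibre integral yields the ordinary Euclidean expression
\[
f(\y) = \kappa \int_{\RR^{dn-d}} W_*\bigl(\Psi^{-1}(\y + \t, u')\bigr)\, \bigl|\det D\Psi^{-1}(\y + \t, u')\bigr| \, du',
\]
for some positive constant $\kappa$ depending only on $K$ and the normalisation of measure, with the integrand smooth in $(\y, u')$ and compactly supported in $u'$ uniformly in $\y$.

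Smoothness of $f$ together with the bounds on $\partial^{i_1 + \cdots + i_d} f / \partial y_1^{i_1} \cdots \partial y_d^{i_d}$ now follow by differentiating under the integral sign: each derivative produces a finite sum of smooth compositions of $W_*$ and $\Psi^{-1}$ and their derivatives, integrated over a fixed compact $u'$-region whose diameter is controlled by $\delta$. The resulting constants $C_k$ depend only on sup-norms of $W_*$ and its derivatives (hence on $\delta$) and of $\Psi^{-1}$ (hence only on the coefficients of $F$), as asserted. The main technical point, and essentially the only place where any care is needed, is securing a single global chart $\Psi$ covering the whole of $\mathrm{supp}\,W_*$; this is precisely what is arranged by choosing $\delta$ small at the outset, and is consistent with the paper's stipulation that $\delta$ can be freely shrunk while preserving $1 \ll \delta \le 1$.
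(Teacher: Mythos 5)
Your proof is correct and follows essentially the same route as the paper's: pass to the $\underline\u$-coordinates of the descended system, use the non-singular real point $\underline\bxi$ and the inverse function theorem to build a single chart straightening the fibres $Q_l(\underline\u)=\tau_l+w_l$ over the (suitably shrunk) support of the weight, and then differentiate under the integral sign. The only cosmetic difference is which $dn-d$ coordinates you retain in the chart and that you work directly in the $\omega$-basis coordinates rather than first conjugating by $\mathbf{A}^{-1}$ as the paper does; neither affects the conclusion.
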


\begin{proof}
In the course of the proof it will be convenient to write 
$\underline{\mathbf{s}}=(\mathbf{s}_1,\dots,\mathbf{s}_d)$, 
$\underline{\u}=(\u_1,\dots,\u_d)$,  
$\underline{\bxi}=(\bxi_1,\dots,\bxi_d)$ and  $\mathbf{t}=(t_1,\dots,t_d)$.
Recall the definition of the weight function $W$ in \S\ref{s:W} for a suitable  fixed $\underline\bxi\in \RR^{dn}$.
Making the change of variables in \eqref{eq:W-transform}, we see that 
\begin{equation}\label{eq:hunch}
f(\y)
=D_K^{-n/2}  
\int_{\substack{\underline\u\in \RR^{dn}\\ Q_l(\underline\u)-\tau_l=w_l}} 
w(\delta^{-1}|\underline\u-\underline\bxi|)
\d \underline\u,
\end{equation}
where 
$\btau=\mathbf{A}^{-1} \t$ and 
$\mathbf{w}=\mathbf{A}^{-1}\y$.
It will clearly suffice to prove the properties 
recorded in the lemma for the 
integral on the right hand side, 
$\widetilde{f}({\mathbf{w}})$ say, 
regarded as a function of $\mathbf{w}$.
Making the change of variables $\underline{\mathbf{s}}=\underline\u-\underline\bxi$, we 
have 
$$
\widetilde f(\mathbf{w})
=
\int_{\substack{\underline{\mathbf{s}}\in \RR^{dn}\\ Q_l(\underline{\mathbf{s}}+\underline\bxi)-\tau_l=w_l}} 
w(\delta^{-1}|\underline{\mathbf{s}}|)
\d \underline{\mathbf{s}}.
$$
It is now clear that $\widetilde{f}(\mathbf{w})=0$ unless $\w\in [-C,C]^d$ for suitable $C>0$.

Next, 
we recall that $J_{Q_1,\dots,Q_d}(\underline\bxi)$ has rank $d$. 
We may assume without loss of generality that 
$$
\det \left(\frac{\partial }{\partial U_{j1}}Q_i(\underline\bxi)\right)_{\substack{1\leq i,j\leq d}}\neq 0.
$$
Let $\phi:\RR^{dn}\to \RR^{dn}$ be given by 
$$
\underline{\mathbf{s}} \mapsto \left(Q_1(\underline{\mathbf{s}}+\underline\bxi)-\tau^{(1)},s_{1,2},\dots,s_{1,n},\dots ,
Q_d(\underline{\mathbf{s}}+\underline\bxi)-\tau^{(d)},s_{d,2},\dots,s_{d,n}
\right).
$$
The implicit function theorem implies that  there exist  open subsets
$W',W\subset \RR^{dn}$ with $\underline\0\in W'$ and 
$\phi(\underline\0)\in W$, such that 
$\phi:W'\to W$ is a bijection and has differentiable inverse $\phi^{-1}$ on $W$.
It is now clear that we wish to choose $\delta>0$ small enough to ensure  that $\underline{\mathbf{s}}\in W'$ whenever 
$|\underline{\mathbf{s}}|\leq \delta$.

We may now conclude that
$$
\widetilde{f}(\mathbf{w})
=
\int_{\substack{\underline{\mathbf{s}}'\in \RR^{d(n-1)}}}
 \partial_1 \phi^{-1}
w(\delta^{-1}
|\left(s_{1,1},\dots,s_{d,n}
\right)|)
\d \underline{\mathbf{s}}',
$$
where
$\underline{\mathbf{s}}'=
(s_{1,2},\dots,s_{1,n},\dots ,s_{d,2},\dots,s_{d,n})
$, 
and  $s_{1,1},s_{2,1},\dots, s_{d,1}$ are implicitly given by $\underline{\mathbf{s}}'$ and $\w$, and 
$$\partial_1 \phi^{-1}=\left.
\det \left(\frac{\partial (\phi^{-1})_{in+1-n}}{\partial w_j}   \right)_{1\leq i,j\leq d}\right|_{(s_{1,1},\dots ,
s_{d,n})}
$$
is the associated Jacobian. Since $\phi^{-1}$ is smooth this implies that 
$\widetilde{f}(\mathbf{w})$ is infinitely differentiable and that its partial derivatives satisfy the bound claimed in the lemma. 
\end{proof}

Now it follows from 
Corollary
\ref{cor:integral-bound} that  for $t=N/P^2\in \RR$, and $\ve$ fixed as in the corollary, we have 
$$
I_\fb(t;\0)\ll  \frac{P^{d+2\ve}}{\n\fb}.
$$
Furthermore, in view of 
Lemma \ref{lem:I0}, it follows from \eqref{eq:zug} and  \cite[Lemma~4.1]{BV}
that 
$$
I_\mathfrak{b} (t;\0)
=
\sqrt{D_K} f(0) +O_A\left(\left(\frac{\n\fb}{P^d}\right)^A\right),
$$
for any $A\geq 0$, 
where 
$$
f(0)
=
\int_{\substack{\underline\x\in \RR^{dn}\\ F^{(l)}(\underline\x)=t_l}} W(\underline\x)\d \underline\x,
$$
if $\underline\x=(\x_1,\dots,\x_d)$.
According to   \eqref{eq:hunch}, we have 
$f(0)=
D_K^{-n/2}  \sigma_\infty(t),$
where
\begin{equation}\label{eq:sig-infinity}
\sigma_\infty(t)=
\int_{\substack{\underline\u\in \RR^{dn}\\ Q_l(\underline\u)=\tau_l}} 
w(\delta^{-1}|\underline\u-\underline\bxi|)
\d \underline\u
\end{equation}
is the usual  singular integral for the descended system. 
In particular, arguing as in Davenport and Lewis \cite[\S 6]{dl}, 
a standard argument ensures that  $\sigma_\infty(t)>0$, since $\underline\bxi$ 
is a non-singular real point on the descended system.

We summarise our preliminary treatment of the main term in the following result.

\begin{lemma}\label{lem:main_term}
Let $\ve>0$. Then, for any $A\geq 1$, we have 
\begin{align*}
M(P)=~&
\frac{P^{(n-2)d} }{D_{K}^{n-1/2}}  
\sum_{\substack{0\neq \fb\subset \fo\\ \n\fb\ll P^d}}
(\n \gfb  )^{-n}
S_\mathfrak{b}  (N;\0)
\left(\sigma_\infty(N/P^2)
+O_A\left(\left(\frac{\n\fb}{P^d}\right)^A\right)\right) \\
&\qquad
+O_A(P^{-A}),
\end{align*}
where
 $\sigma_\infty(N/P^2)>0$ is  given by \eqref{eq:sig-infinity}.
\end{lemma}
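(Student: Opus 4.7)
The proof is essentially an assembly of the analysis developed in this section; there is no genuinely new technical difficulty, so the plan is to carefully organise the identification of constants and the packaging of errors. My starting point is the formula for $M(P)$ already given just before the lemma, and the task is to extract from $I_\fb(N/P^2;\0)$ a clean main term of the form $\sigma_\infty(N/P^2)$ plus an acceptable $\fb$-dependent error.

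First, I would recall the one-dimensional representation \eqref{eq:zug}:
$$
I_\fb(t;\0) = \int_V f(v)\, h(\rho, |\nm(v)|)\, \d v, \qquad \rho = \frac{\n\fb}{P^d},
$$
where $f(\y)$ is the surface integral of $W$ over the fibre $F(\x)-t=\y$. By Lemma \ref{lem:I0}, the assumption that the descended system has codimension $d$ with non-singular real point $\underline\bxi$ ensures (via the implicit function theorem applied on a neighbourhood of $\underline\bxi$) that $f$ is a smooth compactly supported weight function on $\RR^d$ whose partial derivatives are bounded uniformly in $t$. In particular, $f$ meets the hypotheses required to apply \cite[Lemma~4.1]{BV} to the integral above.

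Second, I would apply \cite[Lemma~4.1]{BV} to conclude
$$
I_\fb(t;\0) = \sqrt{D_K}\, f(0) + O_A\!\left(\left(\tfrac{\n\fb}{P^d}\right)^A\right)
$$
for every $A\geq 1$. Then I would identify $f(0)$ explicitly: performing the change of variables \eqref{eq:W-transform} (whose Jacobian determinant has absolute value $D_K^{n/2}$) turns the surface integral defining $f(0)$ into the singular integral \eqref{eq:sig-infinity}, giving
$$
f(0) = D_K^{-n/2}\, \sigma_\infty(t).
$$
Positivity of $\sigma_\infty(N/P^2)$ is then a standard consequence of the fact that $\underline\bxi$ is a non-singular real point of the descended system, by the argument of Davenport--Lewis \cite[\S 6]{dl}.

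Finally, I would substitute this expansion back into the formula for $M(P)$. The prefactor $P^{(n-2)d}/D_K^{n/2}$ combines with $\sqrt{D_K}\cdot D_K^{-n/2}$ to produce the overall constant $P^{(n-2)d}/D_K^{n-1/2}$ claimed in the lemma. The remainder term from each $\fb$ can be kept inside the sum, the harmless factor $\sqrt{D_K}$ being absorbed into the implicit constant in $O_A$ since $D_K$ is fixed. Because the truncation in \eqref{eq:BV} was $\n\fb \ll P^d$, the tail contribution $O_A(P^{-A})$ inherited from that truncation remains $O_A(P^{-A})$, completing the expression. The only step that required any real work was Lemma \ref{lem:I0}, which was already established; all that remains here is bookkeeping of constants.
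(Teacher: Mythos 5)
Your proposal is correct and follows the paper's own argument exactly: representing $I_\fb(N/P^2;\0)$ via \eqref{eq:zug}, invoking Lemma \ref{lem:I0} to justify applying \cite[Lemma~4.1]{BV}, identifying $f(0)=D_K^{-n/2}\sigma_\infty(N/P^2)$ through \eqref{eq:hunch}, and collecting the powers of $D_K$. Nothing is missing.
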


In order to proceed further, it is clear that one requires a good enough upper bound for $S_\fb(N;\0)$, in order to show that the error term is satisfactory and the sum over $\fb$ can be extended to infinity.
Such a bound is available for admissible $F$, thanks to Corollary~\ref{cor-expsum1}. Although we omit details, 
one can use it to prove that 
$$
M(P)=
\frac{\sigma_\infty(N/P^2) }{D_{K}^{n-1/2}}  P^{(n-2)d}
\sum_{\substack{0\neq \fb\subset \fo}}
(\n \gfb  )^{-n}
S_\mathfrak{b}  (N;\0)
+O(P^{dn/2+\ve}),
$$
for any admissible $F$ such that $n\geq 5$.
In the setting of Theorems \ref{t:non-diag} and \ref{t:diag} we shall produce better bounds for 
$S_\mathfrak{b}  (N;\0)$ which allow such a deduction under milder hypotheses. 

We close this section with a formal analysis of the 
singular series
$$
\mathfrak{S}(N)=\sum_{(0)\neq \fb\subset \fo} (\n\gfb)^{-n}S_\mathfrak{b}  (N;\0),
$$
ignoring issues of convergence. This is summarised in the following result. 

\begin{lemma}
Assume that $\mathfrak{S}(N)$ is absolutely convergent. Then
$$
\mathfrak{S}(N)=\prod_p \lim_{\ell\to \infty} 
p^{-d\ell(n-1)}\#\left\{\x\in (\fo/p^\ell\fo)^n: F(\x)\equiv N\bmod{p^\ell}\right\}.
$$
We have $\mathfrak{S}(N)>0$ if the shifted descended system has a non-singular $p$-adic solution for every prime $p$.
\end{lemma}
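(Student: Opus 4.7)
The plan is to prove the lemma in two stages: first derive the Euler product for $\mathfrak{S}(N)$ by identifying each local factor with a $p$-adic solution density, and then deduce positivity via Hensel's lemma applied to the shifted descended system.

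For the factorization, Corollary~\ref{cor:mult} gives $S_\fb(N;\0) = S_{\fb_1}(N;\0)S_{\fb_2}(N;\0)$ whenever $\fb = \fb_1\fb_2$ with $\gcd(\n\fb_1,\n\fb_2)=1$, and under the same rational coprimality one also has $\gfb = \gfb_1\gfb_2$, since the conjugates $\fb_i^{\tau^{-1}}$ still lie above disjoint rational primes, so the intersection defining $\gfb$ splits cleanly. Absolute convergence then rearranges $\mathfrak{S}(N)$ as $\prod_p \sigma_p(N)$, with $\sigma_p(N) = \lim_{\ell\to\infty}\sum_{\fb \mid p^\ell\fo}(\n\gfb)^{-n}S_\fb(N;\0)$. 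To match $\sigma_p(N)$ with the claimed density I would apply additive orthogonality modulo the Galois-stable ideal $p^\ell\fo$, grouping each character by its conductor $\fb \mid p^\ell\fo$. Since $(p^\ell\fo)^n \subseteq \gfb^n \subseteq \cH_\fb$, \eqref{eq:define-H'} ensures $\sigma(F(\x+\k)-N) = \sigma(F(\x)-N)$ for every $\k \in \cH_\fb$, so both $\sigstar \sum_{\x \bmod{p^\ell\fo}}\sigma(F(\x)-N)$ and $S_\fb(N;\0)$ fold down to the same character sum over $\fo^n/\cH_\fb$. Their ratio equals $|\gfb^n/(p^\ell\fo)^n| = p^{dn\ell}/(\n\gfb)^n$, and combining with the standard orthogonality count of $\#\{\x \bmod{p^\ell\fo} : F(\x) \equiv N \bmod{p^\ell\fo}\}$ yields
$$p^{-d\ell(n-1)}\#\{\x \bmod{p^\ell\fo} : F(\x) \equiv N \bmod{p^\ell\fo}\} = \sum_{\fb \mid p^\ell\fo}\frac{S_\fb(N;\0)}{(\n\gfb)^n},$$
so that letting $\ell \to \infty$ delivers the claimed Euler factor.

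For positivity, I would pass to the descended system via \eqref{eq:F-to-Q}. Writing $\x = \sum_i \omega_i \u_i$ and $N = \sum_i \omega_i N_i$, the condition $F(\x) \equiv N \bmod{p^\ell\fo}$ is equivalent to $Q_i(\underline\u) \equiv N_i \bmod{p^\ell}$ for $1 \le i \le d$, since $\{\omega_i\}$ is a $\ZZ$-basis of $\fo$ and hence of $\fo/p^\ell\fo$ over $\ZZ/p^\ell\ZZ$. A non-singular $\ZZ_p$-point of the shifted descended system, together with the multivariate Hensel lemma, then implies that $p^{-d\ell(n-1)}\#\{\underline\u \bmod{p^\ell} : Q_i(\underline\u) \equiv N_i \bmod{p^\ell}\}$ stabilizes at a strictly positive limit for every $p$, so each $\sigma_p(N) > 0$; absolute convergence of the product then forces $\mathfrak{S}(N) > 0$. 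The main technical obstacle is the index bookkeeping in the orthogonality step, which becomes transparent once one exploits the Galois-stability of $p^\ell\fo$ to nest $(p^\ell\fo)^n \subseteq \gfb^n \subseteq \cH_\fb$ in accordance with \eqref{eq:define-H'}.
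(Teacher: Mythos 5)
Your proposal is correct and follows essentially the same route as the paper: multiplicativity of $S(k)=\sum_{\n\fb=k}(\n\gfb)^{-n}S_\fb(N;\0)$ via Corollary \ref{cor:mult}, then for each prime a regrouping of all primitive characters of conductor $\fb\mid p^\ell\fo$ into the full character group modulo $p^\ell$, with the index factor $p^{dn\ell}/(\n\gfb)^n$ arising exactly as in the paper from extending the sum over $\x\bmod{\gfb}$ to $\x\bmod{p^\ell\fo}$, and finally orthogonality plus a Hensel/Birch lifting argument for the shifted descended system to get positivity. The only piece you gloss over that the paper spells out is the verification (by counting) that the characters $\psi(\gamma a\cdot)$ for $\fb\mid p^\ell\fo$ and $a\in(\fo/\fb)^*$ exhaust all $p^{\ell d}$ characters modulo $p^\ell$, but this is a routine check.
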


\begin{proof}
We may write
$$
\mathfrak{S}(N)
=\sum_{k=1}^\infty
\sum_{\substack{ \fb\subset \fo\\ 
\n\fb=k
}} (\n\gfb)^{-n}S_\mathfrak{b}  (N;\0)
=\sum_{k=1}^\infty S(k),
$$
say. It follows from Corollary \ref{cor:mult}
that $S(k_1k_2)=S(k_1)S(k_2)$ if $k_1,k_2$ are coprime integers. 
Hence 
$$
\mathfrak{S}(N)=\prod_p \sum_{j\geq 0}S(p^j).
$$
Since $K$ is Galois we may assume that $p$ admits a factorisation $(p)=(\fp_1\cdots \fp_r)^e$, with $\n \fp_1=\cdots =\n\fp_r=p^f$. 
Let $\ell\geq 0$ and let $I_\ell$ denote the set of integral ideals $\fb=\fp_1^{k_1}\cdots \fp_r^{k_r}$,  with 
$0\leq k_i\leq \ell e$ for $1\leq i\leq r$. Then the union of $I_\ell$ over $\ell\geq 0$ exactly matches the set of integral ideals whose norm is a power of $p$. 
Hence
$$
\mathfrak{S}(N)=\prod_p \lim_{\ell\to \infty} 
\sum_{\substack{ \fb\in I_\ell}} (\n\gfb)^{-n}S_\mathfrak{b}  (N;\0).
$$

It follows from \eqref{eq:SUM} that 
\begin{align*}
S_\mathfrak{b}  (N;\0)
&=\sum_{a\in (\fo/\fb)^*} \psi(-\gamma a N)
\sum_{\x\bmod{\gfb}}\psi\left( \gamma a F(\x)\right)\\
&=\frac{(\n\gfb)^{n}}{p^{\ell dn}}
\sum_{a\in (\fo/\fb)^*} \psi(-\gamma a N)
\sum_{\x\bmod{p^\ell\fo}}\psi\left( \gamma a F(\x)\right),
\end{align*}
on extending the inner sum 
to a sum over elements of 
$(\fo/p^\ell\fo)^n$.
Hence, on rearranging, we obtain
$$
\sum_{\substack{ \fb\in I_\ell}} (\n\gfb)^{-n}S_\mathfrak{b}  (N;\0)=
\frac{1}{p^{\ell dn}} 
\sum_{\x\bmod{p^\ell\fo}}
E(p),
$$
where
$$
E(p)=
\sum_{\substack{ \fb\in I_\ell}} 
\sum_{a\in (\fo/\fb)^*} \psi\left(\gamma a (F(\x)-N)\right).
$$
We claim that $\psi(\gamma a\cdot )$ runs over all characters modulo $p^\ell$, as $a$ runs over 
$(\fo/\fb)^*$ and $\fb$ runs over $I_\ell$. 
On one hand, since
$a\in (\fo/\fb)^*$, 
 each character 
$\psi(\gamma a\cdot )$ is a primitive character modulo $\fb$. Since 
$\fb=\fp^{k_1}\cdots \fp_r^{k_r}$, for $k_1,\dots,k_r\leq \ell e$ and $p^\ell=(\fp_1\cdots\fp_r)^{\ell e}$, we 
conclude that each such character  induces a character modulo $p^\ell$.  
In order to complete the proof of the claim it remains to show that we get all $p^{\ell d}$ characters modulo $p^\ell$ this way. 
But the number of characters is precisely
\begin{align*}
\sum_{\fb\in I_\ell} \n\fb \prod_{\fp\mid \fb} \left(1-\frac{1}{\n\fp}\right) &=
\sum_{\fb\in I_\ell} p^{f(k_1+\cdots+k_r)}\prod_{\fp\mid \fb} \left(1-\frac{1}{p^f}\right) \\
&=\prod_{1\leq i\leq r} \left(1+\sum_{1\leq k\leq \ell e} p^{fk} \left(1-\frac{1}{p^f}\right)\right) \\
&=p^{\ell d},
\end{align*}
as required.

We may now conclude from orthogonality of characters that 
$$
E(p)=\begin{cases}
p^{\ell d} & \text{ if $F(\x)\equiv N\bmod{p^\ell}$,}\\
0 & \text{ otherwise,}
\end{cases}
$$
from which the first part of the lemma follows.
The second part is standard. Using \eqref{eq:F-to-Q}, the 
solubility of $F(\x)- N$ in $\fo/p^\ell\fo$ can be reduced to the solubility of a shifted descended system 
$Q_i(\u_1,\dots,\u_d) -N_i$ modulo primes powers, for $1\leq i\leq d$, 
where we have written  $N=\omega_1N_1+\cdots+\omega_d N_d$. 
Arguing as in work of Birch \cite[Lemma 7.1]{birch}, for example, 
the existence of non-singular $p$-adic
zeros of this system is enough to deduce that  $\mathfrak{S}(N)>0$. The 
details of this will not be repeated here.  
\end{proof}

\subsection{Contribution from the non-trivial characters}\label{s:non-trivial}

In this section we make some initial steps in the treatment of the contribution from the non-zero vectors $\mathbf{m}$ in the asymptotic formula for $N_W(F,N;P)$ in Lemma~\ref{lem:poisson}.
This contribution is 
$$
\ll 
P^{(n-2)d} E(N;P),
$$ 
where
\begin{equation}\label{eq:draft}
E(N;P)=
\sum_{\substack{0\neq \fb\subset \fo\\ \n\fb\ll P^d}} 
\sum_{\substack{\0\neq \m\in \widehat{\gfb  }^n}}
(\n \gfb  )^{-n}
|S_\mathfrak{b}  (N;\m)|
|I_\mathfrak{b} (N/P^2;P\m)|.
\end{equation}
The primary  task is to establish conditions under which there is an absolute constant $\Delta>0$ such that $E(N;P)=O(P^{-\Delta})$.

We now place ourselves in the context of the generalised quadratic forms \eqref{eq:special-0} and make some initial steps that will be common to Theorems \ref{t:non-diag-1}--\ref{t:diag}. 
It will be convenient to consider the overall contribution from $\fb$ such that $\n\fb$ and $\n\gfb$ are constrained to lie in dyadic intervals. 
Note that $\n\fb\ll P^d$ and
$\n\gfb\leq (\n\fb)^2$, 
since $\#G=2$.
Accordingly, we let  $X,Y$ be parameters such that 
\begin{equation}\label{eq:XY}
1\leq X\leq Y\leq X^2, \quad X\ll P^d.
\end{equation}
We then write $E(N;P;X,Y)$ for the overall contribution to $E(N;P)$ from non-zero ideals $\fb\subset \fo$ for which 
$$
X\leq \n\fb <2X \quad \text{ and } \quad Y\leq \n\gfb <2Y.
$$
We denote by $\mathcal{B}(X,Y)$ the set of all such ideals.
On summing over dyadic intervals for $X,Y$ satisfying \eqref{eq:XY}, it will suffice to establish the existence of $\Delta>0$ such that 
\begin{equation}\label{eq:end}
E(N;P;X,Y)=O(P^{-\Delta}),
\end{equation}
for any $X,Y$ satisfying \eqref{eq:XY}.

It follows from  Corollary \ref{cor:integral-bound} that 
$$
I_\mathfrak{b} (N/P^2;P\m) \ll_A
P^\ve \int_{\mathcal{U}} 
|K(u,P\m)|\d u +P^{-A},
$$
for any $A\geq 1$, 
where $K(u,P\m)$ is given by \eqref{eq:integral-K} and 
\begin{equation}\label{eq:breeze}
\mathcal{U}=
\left\{u\in V: \mathfrak{H}(u)\leq \frac{P^{d+\ve}}{\n\fb}\right\}.
\end{equation}
Hence  \eqref{eq:draft} yields
\begin{equation}\label{eq:ENPXY}
\begin{split}
E(N;P;X,Y)\ll_A  P^{-A}+
P^\ve Y^{-n}
\sum_{\substack{\fb\in \mathcal{B}(X,Y)}}
\sum_{\substack{0\neq \m\in \widehat{\gfb  }^n}}
|S_\mathfrak{b}  (N;\m)|
\int_{\mathcal{U}}
|K(u,P\m)|\d u ,
\end{split}
\end{equation}
for any $A\geq 1$, 
where $\mathcal{B}(X,Y)$ is the set of non-zero ideals $\fb\subset \fo$ for which 
$
X\leq \n\fb <2X$ and $Y\leq \n\gfb <2Y$.

\section{Homogeneous case: proof of Theorems 
\ref{t:non-diag-1} and 
\ref{t:non-diag}}\label{s:hom}

We begin by proving a general result about rank drop in pencils of quadratic forms in situations where one of the matrices has much smaller rank. It parallels the basic fact in
Reid's thesis \cite[Prop.~2.1]{reid} about 
rank drop in pencils $\nu_1A+\nu_2 B$, for suitable $n\times n$ matrices $A,B$, and shows how 
Assumption  \ref{ass2} can be deduced from an appropriate hypothesis about the shape of the associated singular locus.

\begin{lemma}\label{lem:reid}
Let $L$ be an algebraically closed field of characteristic not equal to $2$, and let  $m<n$. Consider two matrices $A,B\in M_{n\times n}(L)$ such that $B$ has only non-zero entries in the upper left $m\times m$ submatrix, which we also assume to be non-singular. Let $\det(A)\neq 0$. Assume that all singular points of the intersection of the two  quadratic forms associated to $A$ and $B$ have the shape $(0,\mathbf{x}'')$ with $\mathbf{x}''=(x_{m+1},\ldots, x_n)$, and that the intersection has codimension $2$.
Then we have
$$\rank (A+\lambda B) \geq n-1,\quad \forall \lambda \in L.$$
\end{lemma}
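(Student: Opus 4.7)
The plan is to argue by contradiction. Suppose that $\rank(A+\lambda_0 B)\leq n-2$ for some $\lambda_0\in L$. Since $\det A\neq 0$ rules out the case $\lambda_0=0$, we may assume $\lambda_0\neq 0$. Set $K=\ker(A+\lambda_0 B)\subset L^n$, which by hypothesis has dimension at least~$2$.

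The central identity is that for every $\mathbf{x}\in K$ we have $A\mathbf{x}=-\lambda_0 B\mathbf{x}$, so in particular $\mathbf{x}^{T}A\mathbf{x}=-\lambda_0\,\mathbf{x}^{T}B\mathbf{x}$, i.e.\ $Q_A(\mathbf{x})=-\lambda_0 Q_B(\mathbf{x})$ on $K$. Since $L$ is algebraically closed and $\dim K\geq 2$, the restriction $Q_B|_K$ is a quadratic form in at least two variables and therefore admits a nontrivial zero $\mathbf{x}\in K\setminus\{0\}$. For this $\mathbf{x}$ we have $Q_A(\mathbf{x})=Q_B(\mathbf{x})=0$, and the relation $A\mathbf{x}=-\lambda_0 B\mathbf{x}$ shows that the gradients $A\mathbf{x}$ and $B\mathbf{x}$ are linearly dependent. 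By the Jacobian criterion applied to the codimension-$2$ complete intersection $V(Q_A)\cap V(Q_B)$, this forces $\mathbf{x}$ to be a singular point of the intersection.

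The hypothesis on the singular locus now implies that $\mathbf{x}=(0,\mathbf{x}'')$, so the first $m$ coordinates of $\mathbf{x}$ vanish. Since $B$ has non-zero entries only in its upper-left $m\times m$ block, this forces $B\mathbf{x}=0$. Plugging back into $(A+\lambda_0 B)\mathbf{x}=0$ gives $A\mathbf{x}=0$, and since $\det A\neq 0$ we conclude $\mathbf{x}=0$, contradicting our choice of $\mathbf{x}$.

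The argument is almost entirely linear-algebraic, so there is no serious obstacle. The two inputs that have to be handled carefully are (i) the use of algebraic closure (and $\mathrm{char}\,L\neq 2$) to produce a nontrivial isotropic vector of $Q_B|_K$, and (ii) the translation of ``singular point of $V(Q_A)\cap V(Q_B)$'' into linear dependence of $A\mathbf{x}$ and $B\mathbf{x}$, which relies on the codimension-$2$ assumption to make the Jacobian criterion applicable.
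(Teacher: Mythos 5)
Your proof is correct and follows essentially the same route as the paper's: pass to the kernel of $A+\lambda_0 B$ (dimension $\geq 2$), find a nontrivial zero of $Q_B$ there (the paper phrases this as the intersection of $\mathbb{P}(\ker)$ with the quadric $Q_B$ in $\mathbb{P}^{n-1}$, which is the same fact), observe it is a singular point of $Q_A\cap Q_B$, and then use the block structure of $B$ together with $\det A\neq 0$ to contradict the hypothesis on the singular locus. The only cosmetic difference is that the paper derives the contradiction by showing the singular point cannot have the form $(0,\mathbf{x}'')$, whereas you accept that form and deduce $\mathbf{x}=0$; these are logically identical.
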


\begin{proof}
Assume that there is some $\lambda\in L$ with 
$$\rank(A+\lambda B) \leq n-2.$$
Let $V_0\subset \mathbb{A}^n$ be the affine subspace given by the kernel of $A+\lambda B$. Then $\dim V_0\geq 2$. Let $\mathbb{P}(V_0)=V\subset \mathbb{P}^{n-1}$ and let $Q_B\subset \mathbb{P}^{n-1}$ be the quadric given by the matrix $B$. Then $\dim V\geq 1$ and $\dim Q_B=n-2$ as projective varieties. We deduce that the intersection $V\cap Q_B$ is non-empty. Consider a point $\mathbf{x}=(\mathbf{x}',\mathbf{x}'')\in L^n\setminus\{0\}$ in the affine cone of $V\cap Q_B$, where $\mathbf{x}'\in L^m$ and $\bfx'' \in L^{n-m}$. Then we deduce that
$$0=\bfx^t (A+\lambda B) \bfx =\bfx^tA\bfx + \lambda \bfx^t B\bfx = \bfx^tA \bfx.$$
We deduce that $\bfx$ lies on the quadric given by $A$ and as it is in the kernel of $A+\lambda B$, it is a singular point of the intersection $Q_A \cap Q_B$. We claim that $\bfx'\neq 0$, i.e. $\bfx$ is not of the shape $(0,\bfx'')$. Assume for a moment that $\bfx=(0,\bfx'')$. Note that
$$0=(A+\lambda B) (0,\bfx'')= A (0,\bfx'').$$
This is a contradiction to $A$ being non-singular. Hence we found a singular point of the intersection $Q_A\cap Q_B$ which is not of the form $(0,\bfx'')$. 
\end{proof}

The main aim of  this section is to  carry out the proof of  Theorems \ref{t:non-diag-1} and 
\ref{t:non-diag}, which corresponds to taking $N=0$ and 
$$
F(X_1,\dots,X_n)=Q(X_1, \dots, X_n) +R(X_1^{\tau},\dots, X_m^{\tau}),
$$
as in \eqref{eq:special-0}.
Suppose that $\mathbf{A}$ is the $n\times n$ symmetric matrix defining $Q$ and that  $\mathbf{B}$ is the $n\times n$ symmetric matrix given by the condition that its upper left $m\times m$ submatrix defines $R$, with  all other entries are equal to $0$. We may proceed under the assumption that Assumptions \ref{ass1}--\ref{ass3} hold.

We have two tasks remaining. The first is to show that the sum over 
$\fb$ in Lemma~\ref{lem:main_term} can be extended to infinity, with acceptable error, and the second is to 
prove that \eqref{eq:end} holds.
We'll  need some more preparations for estimating the relevant  exponential sum in 
Lemma \ref{lem:main_term} and 
\eqref{eq:ENPXY}.
Recalling the
definition  \eqref{eq:define-H'} of 
 $\calH_\calb$, we lower 
 bound its index 
  in $\fo^n$. 
  
    \begin{lemma}\label{lem:1}
There exist non-zero constants $\kappa_1,\dots,\kappa_n, \tilde\kappa_1,\dots,\tilde\kappa_m\in K$, depending only on $F$ and $K$, such that 
 $$\calH_\calb\subseteq   (\kappa_1\fb\cap \tilde \kappa_1 \fb^{\tau^{-1}}) \times \cdots \times
(\kappa_m\fb\cap \tilde \kappa_m \fb^{\tau^{-1}} )
\times \kappa_{m+1}\fb\times \cdots\times \kappa_{n} \fb.
 $$
 Moreover, we have 
 $\kappa_1^{-1},\dots,\kappa_n^{-1}, \tilde\kappa_1^{-1},\dots,\tilde\kappa_m^{-1}\in \fo$.
\end{lemma}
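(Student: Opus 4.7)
The plan is to unwind the defining condition \eqref{eq:define-H'} of $\calH_\fb$ for the specific form \eqref{eq:special-0}. Since only the identity and the fixed $\tau$ appear, the generalised bilinear form \eqref{eq:bilinear} associated to $F$ reads $B(\a;\h)=\a^{T}\mathbf{A}\h+(\a^{\tau})^{T}\mathbf{B}\h^{\tau}$. Setting $\bd=2\mathbf{A}\h\in\fo^{n}$ and $\be=2\mathbf{B}\h^{\tau}\in\fo^{n}$, the condition $\h\in\calH_\fb$ becomes $\a\cdot\bd+\a^{\tau}\cdot\be\in\fb$ for every $\a\in\fo^{n}$. Testing on $\a=a\bfe_{k}$ with $a\in\fo$ decouples this into $n$ independent single-coordinate conditions
$$
ad_{k}+a^{\tau}e_{k}\in\fb\qquad(\forall\,a\in\fo,\ 1\le k\le n).
$$
Because $\mathbf{B}$ has zero entries outside its upper-left $m\times m$ block, $e_{k}=0$ for $k>m$, and the condition just becomes $d_{k}\in\fb$ for these indices.

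The key step is the decoupling of $d_{k}$ and $e_{k}$ when $k\le m$. I fix once and for all an element $\beta_{0}\in\fo$ with $\gamma_{0}:=\beta_{0}-\beta_{0}^{\tau}\neq 0$; such $\beta_0$ exists because $\tau$ is a non-trivial automorphism of $K/\QQ$ and $\fo$ $\ZZ$-generates $K$, and $\gamma_{0}\in\fo$ since $\fo$ is stable under $\Gal(K/\QQ)$. Plugging $a=1$ gives $d_{k}+e_{k}\in\fb$, and then combining the $a=\beta_{0}$ instance with the $a=1$ instance via an $\fo$-linear combination yields
$$
\gamma_{0}d_{k}\in\fb,\qquad \gamma_{0}e_{k}\in\fb,
$$
for each $1\le k\le m$. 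Together with the case $k>m$, we therefore know $\gamma_{0}d_{k}\in\fb$ for every $k\in\{1,\ldots,n\}$, and moreover $\gamma_{0}e_{k}\in\fb$ for every $k\le m$.

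To convert these constraints back to $\h$ I use invertibility of $\mathbf{A}$ (Assumption \ref{ass1}): writing $D=\det\mathbf{A}\in\fo\setminus\{0\}$, the adjugate $D\mathbf{A}^{-1}$ has entries in $\fo$, so inverting $\bd=2\mathbf{A}\h$ gives
$$
2Dh_{k}=\sum_{j}(D\mathbf{A}^{-1})_{kj}\,d_{j}\in\gamma_{0}^{-1}\fb,
$$
and hence $h_{k}\in(2D\gamma_{0})^{-1}\fb$ for every $k$. This furnishes the constant $\kappa_{k}=(2D\gamma_{0})^{-1}$ with $\kappa_{k}^{-1}=2D\gamma_{0}\in\fo$. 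The sharper constraint for $k\le m$ comes from repeating the same argument with the non-singular upper-left $m\times m$ submatrix $\mathbf{B}_{11}$ of $\mathbf{B}$ (again Assumption \ref{ass1}): if $D_{1}=\det\mathbf{B}_{11}\in\fo\setminus\{0\}$, the equation $\be=2\mathbf{B}_{11}(h_{1}^{\tau},\ldots,h_{m}^{\tau})^{T}$ inverts to give $h_{k}^{\tau}\in(2D_{1}\gamma_{0})^{-1}\fb$ for $k\le m$; applying $\tau^{-1}$, which preserves $\fo$ because $K/\QQ$ is Galois, delivers $h_{k}\in(2D_{1}\gamma_{0})^{-\tau^{-1}}\fb^{\tau^{-1}}$, so we take $\tilde\kappa_{k}=(2D_{1}\gamma_{0})^{-\tau^{-1}}$ with $\tilde\kappa_{k}^{-1}=(2D_{1}\gamma_{0})^{\tau^{-1}}\in\fo$. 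All constants depend only on $F$ and $K$. The only subtle point is the decoupling trick via $\gamma_{0}=\beta_{0}-\beta_{0}^{\tau}$; once this is in hand, everything else is straightforward linear algebra over $\fo$.
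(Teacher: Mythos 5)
Your proof is correct and follows essentially the same route as the paper: test the condition $2B(\a;\h)\in\fb$ on scaled unit vectors, decouple the identity part $2\mathbf{A}\h$ from the conjugated part $2\mathbf{B}\h^{\tau}$ using the invertibility of a small system built from $\fo$ and its $\tau$-conjugates, and then invert $\mathbf{A}$ and the upper-left block of $\mathbf{B}$ via their adjugates. The only (cosmetic) difference is that you decouple with the $2\times 2$ system coming from $a\in\{1,\beta_0\}$, with determinant $\gamma_0=\beta_0-\beta_0^{\tau}\neq 0$, whereas the paper tests against the full integral basis $\omega_1,\dots,\omega_d$ and invokes the maximal rank of $(\omega_l^{\tau})$.
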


\begin{proof}
Assume that $\mathbf{A}$ has symmetric entries $a_{i,j}\in \fo$, for $1\leq i,j\leq n$, and that 
$\mathbf{B}$ has 
symmetric entries $b_{i,j}\in \fo$, for $1\leq i,j\leq m$.
Then the associated bilinear form takes the shape
$$
B(X_1,\dots,X_n;Y_1,\dots,Y_n)=\sum_{i,j\leq n} a_{i,j} X_i Y_j +
\sum_{i,j\leq m} b_{i,j} X_i^\tau Y_j^\tau.
$$
Now 
 $\h\in \mathcal{H}_\fb$ if and only if
 $2B(\h,\k)\in \fb$ for all $\k\in \fo^n$. Let $\omega_1,\ldots, \omega_d$ be an integral basis of $\fo$ with $\ome_1=1$. Let $l\in \{1,\ldots, d\}$ and $j\in \{1,\dots,n\}$ and consider a vector $\k$ such that the $j$-th entry is equal to $\ome_l$ and all other entries are equal to zero. Then the condition 
 $B(\h,\k)\in \fb$ implies that 
 $$
2 \omega_l \sum_{i=1}^n a_{i,j} h_i + 2\ome_l^\tau \sum_{i=1}^m b_{i,j} h_i^\tau  \in \fb,\quad 1\leq l\leq d,\ 1\leq j\leq n.
 $$
 As the matrix $(\ome_l^\tau)_{1\leq l\leq d, \tau\in \Gal(K/\QQ)}$ is invertible, this implies that there exists $\beta\in K$ with $\beta^{-1}\in \fo$ such that
 $$ 
 \sum_{i=1}^n a_{i,j} h_i\in \beta \fb,\quad \sum_{i=1}^m b_{i,j} h_i^\tau \in \beta \fb,\quad 1\leq j\leq n.
 $$
 Thus we find that $\mathbf{A}\bfh\in (\beta\fb)^n$ and $\mathbf{B}(\h')^\tau \in (\beta\fb)^m$, where 
$\h'=(h_1,\dots,h_m).$
 As both matrices $\mathbf{A}$ and $\mathbf{B}$ are non-singular, this implies that 
 $$
\h\in \frac{1}{(\det \mathbf{A})} (\beta \fb)^n,\quad  \h' \in \frac{1}{(\det\mathbf{B})^{\tau^{-1}}} (\beta^{\tau^{-1}}\fb^{\tau^{-1}})^m.
$$
Putting these together, the statement of the lemma easily follows. 
\end{proof}

\begin{corollary}\label{cor:2}
Let $N\in \fo$ and let $F$ be given by \eqref{eq:special-0}. Suppose that Assumption \ref{ass1}
holds. 
Then 
$$
S_\mathfrak{b}  (N;\m)\ll 
  (\n\fb)^{1-(n-m)/2} 
(\n \gfb)^{n-m/2}.
$$
Moreover, $S_\mathfrak{b}  (N;\m)=0$ unless 
$
m_i\in \fd^{-1}\fb^{-1}$, for  $m<i\leq n$.
\end{corollary}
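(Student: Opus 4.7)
The strategy is to combine the general exponential sum bound of Lemma \ref{expsum1} with the explicit description of $\mathcal{H}_\fb$ provided by Lemma \ref{lem:1}, and to combine Lemma \ref{lem:S-zero} with a simple construction of test vectors in $\mathcal{H}_\fb$ that exploits the special shape \eqref{eq:special-0}.

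For the bound on $|S_\fb(N;\m)|$, note that $G=G_F=\{\mathrm{id},\tau\}$ in our setting, so that $\gfb=\fb\cap\fb^{\tau^{-1}}$. Let $\mathcal{L}$ denote the product of fractional ideals appearing on the right-hand side of Lemma \ref{lem:1}. Since $\gfb^n\subseteq \mathcal{H}_\fb\subseteq \mathcal{L}$, we have the index bound
\[
[\mathcal{H}_\fb:\gfb^n]\leq [\mathcal{L}:\gfb^n]=\prod_{i=1}^n [\mathcal{L}_i:\gfb].
\]
For each $1\leq i\leq m$, a prime-by-prime comparison of the valuations of $\mathcal{L}_i=\kappa_i\fb\cap\tilde\kappa_i\fb^{\tau^{-1}}$ and $\gfb$ (using $\kappa_i^{-1},\tilde\kappa_i^{-1}\in\fo$) shows that the two fractional ideals differ only at primes dividing $(\kappa_i^{-1}\tilde\kappa_i^{-1})$, so $[\mathcal{L}_i:\gfb]\ll 1$ uniformly in $\fb$. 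For $m<i\leq n$, we instead have $[\mathcal{L}_i:\gfb]=[\kappa_i\fb:\fb]\cdot[\fb:\gfb]\ll \n\gfb/\n\fb$. Multiplying gives $[\mathcal{H}_\fb:\gfb^n]\ll (\n\gfb/\n\fb)^{n-m}$. Feeding this into Lemma \ref{expsum1}, together with $|(\fo/\fb)^*|\leq \n\fb$ and $|\fo/\gfb|^{n/2}=(\n\gfb)^{n/2}$, yields
\[
|S_\fb(N;\m)|\ll \n\fb\cdot(\n\gfb/\n\fb)^{(n-m)/2}\cdot(\n\gfb)^{n/2}=(\n\fb)^{1-(n-m)/2}(\n\gfb)^{n-m/2},
\]
as claimed.

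For the vanishing statement, we invoke Lemma \ref{lem:S-zero}, which forces $\m\cdot\h\in\fd^{-1}$ for every $\h\in\mathcal{H}_\fb$. Fix $m<i\leq n$ and $h_i\in\fb$, and consider the vector $\h=h_i\bfe_i\in\fo^n$. Since the $\tau$-conjugated summand of the bilinear form \eqref{eq:bilinear} associated to \eqref{eq:special-0} involves only the first $m$ coordinates, and these coordinates vanish in $h_i\bfe_i$ (as $i>m$), we find
\[
2B(\a;h_i\bfe_i)=2h_i\sum_{j=1}^n a_{j,i}a_j\in 2h_i\fo\subseteq\fb
\]
for every $\a\in\fo^n$. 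Hence $h_i\bfe_i\in\mathcal{H}_\fb$, and Lemma \ref{lem:S-zero} gives $m_ih_i\in\fd^{-1}$ for every $h_i\in\fb$. This is equivalent to $m_i\in\fd^{-1}\fb^{-1}$, as required. The main technical input is the factor-by-factor calculation of $[\mathcal{L}:\gfb^n]$; once that is done, the remainder is routine bookkeeping and no essential obstacle is anticipated.
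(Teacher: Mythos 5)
Your proof is correct and follows essentially the same route as the paper: the first part combines Lemma \ref{lem:1} with Lemma \ref{expsum1} to bound $|\calH_\fb/\gfb^n|\ll(\n\gfb/\n\fb)^{n-m}$ (the paper phrases this via the complementary index and \eqref{eq:dinky}, you compute $[\mathcal{L}:\gfb^n]$ directly, which is the same calculation), and the second part uses Lemma \ref{lem:S-zero} together with the observation that $\fb\bfe_i\subset\calH_\fb$ for $i>m$, exactly as in the paper. No gaps.
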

\begin{proof}
It follows from 
Lemma \ref{lem:1} that $|\fo^n/\calH_\fb|\gg (\n\gfb)^{m}(\n\fb)^{n-m}$. Thus \eqref{eq:dinky} implies that 
$$
|\calH_\grb/\gfb^n|= \frac{|\calO^n/\gfb^n|}{|\calO^n/{\calH}_\grb|} \ll \frac{(\n\gfb)^n}{
(\n\gfb)^{m}(\n\fb)^{n-m}}
=\left(\frac{\n\gfb}{\n\grb}\right)^{n-m}.
$$
Inserting this into  Lemma \ref{expsum1} yields  the desired upper bound. 
We have already observed in Lemma  \ref{lem:S-zero} that 
$S_\mathfrak{b}  (N;\m)=0$ unless $\m.\h\in \fd^{-1}$ for all $ \h\in \mathcal{H}_\fb$.
Noting that $\gfb^m\times \fb^{n-m}\subset \mathcal{H}_\fb$, the second part easily follows. 
\end{proof}

Returning to Lemma \ref{lem:main_term}, 
it immediately follows from this that 
the overall contribution from the tail  $\n\fb\gg P^d$ is 
\begin{align*}
&\ll 
P^{(n-2)d} 
\sum_{\substack{ \fb\subset \fo\\ \n\fb\gg P^d}}
(\n \gfb  )^{-n}
|S_\mathfrak{b}  (N;\0)|\\
&\ll 
P^{(n-2)d} 
\sum_{\substack{ \fb\subset \fo\\ \n\fb\gg P^d}}
  (\n\fb)^{1-n/2+m/2} 
(\n \gfb)^{-m/2}.
\end{align*}
Since $\n\gfb\geq \n\fb$, this is acceptable provided  that $n>4$, which is certainly implied by the hypotheses in Theorems \ref{t:non-diag-1} and \ref{t:non-diag}.
Thus we can focus our remaining efforts on establishing \eqref{eq:end}.

Our next goal is to analyse the integrals $K(u,P\m)$ 
in \eqref{eq:ENPXY}
for the case that $F$ has the shape 
$F(\x)=Q(\x)+R(x_1^\tau, x_2^\tau,\ldots, x_m^\tau),$
for $\tau\in \Gal(K/\QQ)$ some fixed automorphism. Taking the $l$th embedding into the real numbers gives
$$F^{(l)}(\x^{(1)},\ldots, \x^{(d)}) =Q^{(l)}(\x^{(l)})+R^{(l)}(\rho_l(\x^{(\tau)})),\quad 1\leq l\leq d$$
where we write $\x^{(l)}=\rho_l(\x)$. For each $1\leq l\leq d$, we define $l_\tau$ through the relation
\eqref{eq:l_tau}.
With this notation we obtain
\begin{align*}
\mathcal{Q}(\underline\x)&= \sum_{1\leq l \leq d} u_l F^{(l)}(\underline\x) \\
&= \sum_{1\leq l\leq d} u_lQ^{(l)}(\x^{(l)}) + \sum_{1\leq l\leq d} u_l R^{(l)}(\rho_l(\x^{\tau}))\\
&=  \sum_{1\leq l\leq d} u_lQ^{(l)}(\x^{(l)}) + \sum_{1\leq l\leq d} u_{l_\tau} R^{(l_\tau)}(\x^{(l)}).
\end{align*}
Hence
$$
K(u,P\m)=\prod_{l=1}^d \int_{\R^n} W(\x^{(l)}) e(G^{(l)}(\x^{(l)})-P\m^{(l)}.\x^{(l)}) \d\x^{(l)},
$$
with 
$$G^{(l)}(\x^{(l)})= u_lQ^{(l)}(\x^{(l)})+u_{l_\tau}R^{(l_\tau)}(\x^{(l)}).$$

Note that $G^{(l)}(\x^{(l)})$ is a quadratic form in $\x^{(l)}$ and hence can be represented by a symmetric matrix, which can be diagonalised using an orthogonal base change. Thus, for every tuple $u=(u_1,\ldots, u_d)$, there exists a diagonal matrix $\diag(\eth_{l,i}(u))_{1\leq i\leq n}$ and an orthogonal matrix $M_l(u)\in O(n)$ such that 
$$
G^{(l)}(\x^{(l)})= (\x^{(l)})^t M_l(u)^t \diag(\eth_{l,i}(u)) M_l(u)\x^{(l)}. 
$$
Set
$$
K^{(l)}(u,P\m)= \int_{\R^n} W(\x^{(l)}) e(G^{(l)}(\x^{(l)})-P\m^{(l)}.\x^{(l)})\d\x^{(l)}, \quad \text{ for $1\leq l\leq d$}.
$$
With the change of coordinates $M_l(u)\x^{(l)}=\y^{(l)}$, we get
\begin{align*}
K^{(l)}&(u,P\m)\\
&=\pm \int_{\R^n} W(M_l(u)^t\y^{(l)}) e((\y^{(l)})^t \diag(\eth_{l,i}(u))\y^{(l)}-P\m^{(l)}.(M_l(u))^t \y^{(l)})\d \y^{(l)}\\
&= \pm \int_{\R^n} W(M_l(u)^t\y^{(l)}) e((\y^{(l)})^t \diag(\eth_{l,i}(u))\y^{(l)}-PM_l(u)\m^{(l)}. \y^{(l)})\d \y^{(l)}.
\end{align*}
We are now ready to prove the following result. 

\begin{lemma}
For any $\ve>0$, 
the integral $K^{(l)}(u,P\m)$ is essentially supported on the set of $u$ and $\m$ for which 
$$|(M_l(u)\m^{(l)})_i|\ll P^{-1+\ve}| \eth_{l,i}(u)|,\quad 1\leq i\leq n,$$
and
$$|m_i^{(l)}|\ll P^{-1+\ve} |u_l|,\quad m< i\leq n.$$
Moreover, we have 
$$
K^{(l)}(u,P\m)\ll   \prod_{i=1}^n \min\left( 1, \frac{1}{|\eth_{l,i}(u)|^{1/2}}\right).$$
\end{lemma}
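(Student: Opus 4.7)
My plan is to analyse $K^{(l)}(u,P\m)$ in two stages: apply Lemma~\ref{lem:HBP} to obtain the stated modulus bound, and then use variable-by-variable integration by parts to pin down the essential support. After the orthogonal change of coordinates $\y^{(l)}=M_l(u)\x^{(l)}$ already carried out in the excerpt, the integrand takes the form $w_{l,u}(\y^{(l)})\,e((\y^{(l)})^t\diag(\eth_{l,i}(u))\y^{(l)}-\mathbf{b}\cdot\y^{(l)})$, with $\mathbf{b}:=PM_l(u)\m^{(l)}$ and $w_{l,u}(\y):=W(M_l(u)^t\y)$. Because $M_l(u)\in O(n)$, the weight $w_{l,u}$ is smooth and compactly supported with derivatives bounded independently of $u$, and the quadratic form is already diagonal with eigenvalues $\eth_{l,1}(u),\dots,\eth_{l,n}(u)$. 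The first assertion of Lemma~\ref{lem:HBP} then immediately delivers
$$K^{(l)}(u,P\m)\ll \prod_{i=1}^n \min(1,|\eth_{l,i}(u)|^{-1/2}),$$
which is the claimed modulus bound.

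For the first essential-support statement I would integrate by parts repeatedly in the single variable $y_i^{(l)}$, exploiting that the phase is diagonal in these coordinates. Its $y_i^{(l)}$-derivative is $2\eth_{l,i}(u)y_i^{(l)}-b_i$, and on the support of $w_{l,u}$ one has $|y_i^{(l)}|=O(1)$; hence this derivative has modulus $\gg |b_i|$ provided $|b_i|$ exceeds a sufficiently large multiple of $|\eth_{l,i}(u)|$. Under this condition, $A$-fold integration by parts in $y_i^{(l)}$ yields a bound $O_A(|b_i|^{-A})$, which is $O_A(P^{-A\ve})$ once also $|b_i|\gg P^\ve$. Putting the two sufficient conditions for negligibility together shows that, up to an arbitrarily small loss in the exponent, the integral is negligible unless $|(M_l(u)\m^{(l)})_i|\ll P^{-1+\ve}|\eth_{l,i}(u)|$ for every $i$.

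For the second essential-support statement I would instead work in the original variables $\x^{(l)}$, before the diagonalising change of coordinates, since the orthogonal matrix $M_l(u)$ mixes the indices $1\leq i\leq m$ and $m<i\leq n$ in an uncontrolled way. The key observation is that for $m<i\leq n$ the variable $x_i^{(l)}$ does not appear in $R^{(l_\tau)}$, so in $G^{(l)}(\x^{(l)})=u_lQ^{(l)}(\x^{(l)})+u_{l_\tau}R^{(l_\tau)}(\x^{(l)})$ every monomial involving $x_i^{(l)}$ carries a factor of $u_l$. Hence $\partial_{x_i^{(l)}}G^{(l)}=O(|u_l|)$ on the support of $W$, and the $x_i^{(l)}$-derivative of the full phase has modulus $\gg P|m_i^{(l)}|$ whenever $P|m_i^{(l)}|$ exceeds a sufficiently large multiple of $|u_l|$. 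Repeated integration by parts in $x_i^{(l)}$ then bounds the integral by $O_A(P^{-A\ve})$, confining the essential support to $|m_i^{(l)}|\ll P^{-1+\ve}|u_l|$.

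The only place I anticipate any real bookkeeping difficulty is in controlling the derivatives of $w_{l,u}$ that appear in the error terms of the iterated integration by parts. Since $M_l(u)$ is orthogonal, however, every partial derivative of $w_{l,u}$ is a finite sum of partial derivatives of $W$ weighted by entries of $M_l(u)^t$, all of which are bounded by $1$ in modulus. Thus uniform control in $u$ and $\m$ is automatic, and no genuine obstacle arises.
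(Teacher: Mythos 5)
Your proposal is correct and follows essentially the same route as the paper: the paper's proof simply notes that the derivatives of $W(M_l(u)^t\y^{(l)})$ are bounded uniformly in $u$ (since $M_l(u)\in O(n)$) and then invokes Lemma \ref{lem:HBP}. Your explicit variable-by-variable integrations by parts — in the diagonalising coordinates for the first support condition, and in the original coordinates for the indices $i>m$, where $x_i^{(l)}$ enters the phase only through $u_lQ^{(l)}$ — are precisely the arguments implicit in that citation, and you carry them out correctly.
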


\begin{proof}
Recall that $M_l(u)\in O(n)$. In particular all entries of $M_l(u)$ are bounded independently  of $u$ and  we obtain
$$
\frac{\partial^k}{\partial (y_i^{(l)})^k} W(M_l(u)^t \y^{(l)}) \ll_k 1,
$$
uniformly in $u$, for all $k\in \mathbb{N}$.
The result now follows from Lemma \ref{lem:HBP}.
\end{proof}

Henceforth we take $N=0$ and write
$E(P;X,Y)=E(0;P;X,Y)$ in 
\eqref{eq:ENPXY}. We shall adhere to  common convention and allow the value of $\ve>0$ to change at each appearance, so that $P^\ve\log P\ll P^\ve$, for example.  Moreover, all implied constants are allowed to depend on $\ve$.

Applying Corollary 
\ref{cor:2}, we deduce that 
$$
E(P;X,Y)\ll P^\ve 
X^{1-(n-m)/2}
Y^{-m/2}
\sum_{\substack{\fb\in \mathcal{B}(X,Y)}}
\sum_{\substack{\0\neq \m\in \widehat{\gfb  }^n\\
i>m \Rightarrow m_i\in \mathfrak{d}^{-1}\mathfrak{b}^{-1}
}}
\int_{\mathcal{U}}
|K(u,P\m)|\d u.
$$
Let $\delta\in \gfb\fd$ and let  $\fp_1$ 
be a prime
ideal coprime to $\gfb\fd$,  with $\n\fp_1 \ll (\n \fb)^{\ve/d}$, such that
$(\delta)=\gfb\fd\fp_1$. On multiplying $\delta$ by an appropriate unit,  
there is no loss of generality in assuming that 
\begin{equation}\label{eq:peake}
Y^{1/d}
\ll |\delta^{(l)} |\ll
 Y^{1/d+\ve},
\end{equation}
for $1\leq l\leq d$, 
since $Y\leq \n \gfb <2Y$.
We are led to make the change of variables 
\begin{equation}\label{eq:change-var}
c_i=
\delta m_i ,
\end{equation}
for $1\leq i\leq n$, so that $\c=(c_1,\dots,c_n)\in \fo^n$.
Then 
\begin{equation}\label{eq:squall}
c_i\in \delta
\fd^{-1}\fb^{-1}= \fp_1\fb^{-1}\gfb\subset \fb^{-1}\gfb, 
\quad \text{for  $m<i\leq n$}.
\end{equation}
We may now write
$$
E(P;X,Y)\ll P^\ve 
X^{1-(n-m)/2}
Y^{-m/2}
\sum_{\substack{\fb\in \mathcal{B}(X,Y)}}
\sum_{\substack{\0\neq \c\in \fo^n\\
\text{\eqref{eq:squall} holds}}}
\int_{\mathcal{U}}
|K(u,P\delta^{-1}\c)|\d u.
$$

Define the function
\begin{equation}\label{eq:f(u)}
f(u)=\prod_{1\leq l\leq d} \prod_{1\leq i\leq n} \min\left( 1, \frac{1}{|\eth_{l,i}(u)|^{1/2}}\right).
\end{equation}
Let $\mathcal{R}(\m)$ be the set of $u\in \mathcal{U}$ such that
$$|(M_l(u)\m^{(l)})_i|\ll P^{-1+\ve} |\eth_{l,i}(u)|,\quad 1\leq i\leq n, 1\leq l\leq d,$$
and
$$|m_i^{(l)}|\ll P^{-1+\ve} |u_l|,\quad m<i\leq n,\, 1\leq l\leq d.$$
We now have 
$$
E(P;X,Y)\ll P^\ve 
X^{1-(n-m)/2}
Y^{-m/2}
\sum_{\substack{\fb\in \mathcal{B}(X,Y)}}
\sum_{\substack{\0\neq \c\in \fo^n\\
\text{\eqref{eq:squall} holds}}}
\int_{\mathcal{R}(\delta^{-1}\c)}
f(u)\d u.
$$
Let 
$$
L(u)=
\sum_{\fb\in \mathcal{B}(X,Y)} 
\sum_{
\c\in \mathcal{C}(u,\fb)}
1,
$$
where 
$\mathcal{C}(u,\fb)$ is the set of non-zero vectors $\c\in \fo^n$ for which \eqref{eq:squall} holds,
$$
|(M_l(u)\c^{(l)})_i| \ll P^{-1+\ve}Y^{1/d} |\eth_{l,i}(u)|,\quad 1\leq i\leq n, \,1\leq l\leq d,$$
and
$$|c_i^{(l)}|\ll P^{-1+\ve} Y^{1/d} |u_l|,\quad m<i\leq n, \, 1\leq l\leq d.$$
Then  we have
$$
E(P;X,Y)\ll P^\ve 
X^{1-(n-m)/2}
Y^{-m/2}\int_{\mathcal{U}} f(u)L(u) \d u.$$

Our next goal is to estimate $L(u)$. For each  $1\leq l\leq d$ we sort the eigenvalues $\eth_{l,i}(u)$ in a way such that
$$
|\eth_{l,1}(u)|\geq |\eth_{l,2}(u)|\geq \cdots \geq |\eth_{l,n}(u)|.
$$
Note that we can always achieve this by adjusting the orthogonal matrix $M_l(u)$ with suitable permutations.
Moreover, for all $1\leq i\leq n$ and $1\leq l\leq d$, we have
\begin{equation}\label{upperboundEV1}
|\eth_{l,i}(u)|\ll |u_l|+|u_{l_\tau}|.
\end{equation}
It will now be useful to  make the observation 
\begin{equation}\label{eq:carpet}
 \prod_{l=1}^d (1+|u_l| +|u_{l_\tau}|) \ll \prod_{l=1}^d ((1+|u_l|)(1+|u_{l_\tau}|)) \ll \mathfrak{H}(u)^2.
\end{equation}
We proceed by proving the following result.

\begin{lemma}\label{lem:wish}
Let $u\in V$ such that $\mathfrak{H}(u)\leq P^{d+\ve}/X$.
If $L(u)\neq 0$ then 
\begin{equation}\label{eq:thisa}
P^{-d+\ve} Y 
\mathfrak{H}(u)^2\gg 1.
\end{equation}
Moreover, we have 
$
L(u)\ll P^\ve X J(u),
$
where
$$
 J(u)=
\prod_{1\leq l\leq d}  \prod_{1\leq i\leq m} (1+ P^{-1+\ve}Y^{1/d} |\eth_{l,i}(u)|).
$$
\end{lemma}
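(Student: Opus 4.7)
The plan is to prove the two claims in turn. For the lower bound \eqref{eq:thisa}, if $L(u)\neq 0$ then some valid pair $(\fb,\c)$ with $\c\neq\0$ exists; fix an index $j$ with $c_j\neq 0$. Since $c_j\in\fo\setminus\{0\}$, we have $\prod_{l=1}^d|c_j^{(l)}|=|N_{K/\QQ}(c_j)|\geq 1$. Orthogonality of $M_l(u)$, combined with the $\mathcal{C}(u,\fb)$-box constraint and the eigenvalue bound \eqref{upperboundEV1}, yields
\[
|c_j^{(l)}|\leq |\c^{(l)}|_2=|M_l(u)\c^{(l)}|_2\leq \sqrt{n}\max_i|(M_l(u)\c^{(l)})_i|\ll P^{-1+\ve}Y^{1/d}(|u_l|+|u_{l_\tau}|).
\]
Taking the product over $l$ and invoking \eqref{eq:carpet} produces \eqref{eq:thisa}.

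For the upper bound, I would split the count as (number of $\fb$) times (count of $\c$ per $\fb$). The number of integral ideals $\fb$ with $X\leq \n\fb<2X$ is $O(X)$, contributing the factor $X$. For each such $\fb$, I bound $|\mathcal{C}(u,\fb)|$ in two stages. \emph{Stage 1} handles the last $n-m$ components of $\c$: discarding the coupling from the $M_l(u)$-box and using only the ideal condition $c_i\in\fb^{-1}\gfb$ (an integral ideal of norm $\asymp Y/X$) together with $|c_i^{(l)}|\ll P^{-1+\ve}Y^{1/d}|u_l|$, a standard lattice-point count in an ideal of prescribed norm gives a per-component bound of $\ll 1+P^{-d+\ve}X|\nm(u)|$. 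Since $|\nm(u)|\leq \mathfrak{H}(u)\leq P^{d+\ve}/X$ by hypothesis, this is $O(P^\ve)$, so the total count across the $n-m$ components is still $O(P^\ve)$ after renaming $\ve$.

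\emph{Stage 2} handles, for fixed $(c_{m+1},\ldots,c_n)$, the count of $\c_1=(c_1,\ldots,c_m)\in\fo^m$ satisfying the $M_l(u)$-box constraints $|(M_l(u)\c^{(l)})_i|\ll B_{l,i}$ for all $l,i$, where $B_{l,i}=P^{-1+\ve}Y^{1/d}|\eth_{l,i}(u)|$. The set of admissible $\c_1^{(l)}\in\RR^m$ is the image of the rotated box under projection onto the first $m$ original coordinates, which is a zonotope generated by $\pm B_{l,i}$ times the first $m$ columns of $M_l(u)^T$. Cauchy--Binet, together with orthogonality of $M_l(u)$ (so that the corresponding $m\times n$ block has orthonormal rows) and the decreasing ordering $|\eth_{l,1}(u)|\geq\cdots\geq|\eth_{l,n}(u)|$, gives the sharp bound $\ll \prod_{i\leq m}B_{l,i}$ on the projection volume. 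Standard lattice-point counting in $\fo^m$ (of covolume $D_K^{m/2}$) then yields $\#\{\c_1\}\ll 1+\prod_l\prod_{i\leq m}B_{l,i}\leq \prod_l\prod_{i\leq m}(1+B_{l,i})=J(u)$. Combining all bounds gives $L(u)\ll P^\ve X J(u)$.

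The main obstacle is the zonotope-volume estimate in Stage 2. Applying the exact Cauchy--Binet formula for the volume of the image of an $n$-dimensional box under an $m\times n$ matrix with orthonormal rows, together with the decreasing ordering of the $B_{l,i}$, is what yields the sharp factor $\prod_{i\leq m}B_{l,i}$; a naive bound via the full $n$-dimensional box volume would give $\prod_i B_{l,i}$ instead, leaving factors $(1+B_{l,i})$ for $i>m$ that cannot be absorbed into $XP^\ve$.
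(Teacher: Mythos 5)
Your overall architecture matches the paper's: the same splitting $\c=(\c',\c'')$, the factor $X$ from the number of ideals in $\mathcal{B}(X,Y)$, an $O(P^\ve)$ count for the components $c_{m+1},\dots,c_n$ via Lang's theorem applied to the ideal $\fb^{-1}\gfb$ of norm $\asymp Y/X$, and a reduction of the count of $\c'$ to a lattice-point problem exploiting the orthogonality of $M_l(u)$. Your proof of \eqref{eq:thisa} is correct and in fact slightly cleaner than the paper's, which splits into the cases $\c''\neq\0$ and $\c'\neq\0$; your unified argument works because the constraint $|(M_l(u)\c^{(l)})_i|\ll P^{-1+\ve}Y^{1/d}|\eth_{l,i}(u)|$ is imposed for all $1\leq i\leq n$, so any non-zero coordinate $c_j\in\fo$ gives $1\leq|N_{K/\QQ}(c_j)|\ll P^{-d+\ve}Y\prod_l(|u_l|+|u_{l_\tau}|)$, and \eqref{eq:carpet} finishes. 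Stage 1 is also fine (the paper makes the normalisation explicit by passing to a well-shaped generator $\lambda$ of $\fb^{-1}\gfb\fp_2^{-1}$ and counting integral vectors, but the content is the same).

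The genuine gap is the last step of Stage 2: the inequality $\#\{\c_1\}\ll 1+\mathrm{vol}$ is false for the regions you are counting in. The admissible set for $\c'$ is (contained in) a translate of the product of zonotopes $\prod_l Z_l$ with $Z_l=M_l'(u)^T(\mathrm{box}_l)$, and such a zonotope can be extremely eccentric: if $B_{l,1}$ is large and $B_{l,2},\dots,B_{l,m}$ are tiny, $Z_l$ is essentially a long segment of top-dimensional volume $o(1)$ that may nevertheless contain $\asymp B_{l,1}$ lattice points. So the count is not controlled by $1+\prod_l\prod_{i\leq m}B_{l,i}$; the correct (and needed) bound is the larger quantity $\prod_l\prod_{i\leq m}(1+B_{l,i})=J(u)$, which one obtains by bounding the number of lattice points in a zonotope by the sum over \emph{all} subsets $S$ with $|S|\leq m$ of the $|S|$-dimensional volumes $\prod_{i\in S}B_{l,i}\,|\det(w_i)_{i\in S}|$ (i.e.\ all intrinsic volumes, not just the top one), and then using your Cauchy--Binet observation $|\det(w_i)_{i\in S}|\leq 1$ together with the decreasing ordering of the $B_{l,i}$. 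Since you relax to $J(u)$ in the next line anyway, the conclusion survives once this step is repaired, but as written the chain of inequalities contains a false link. For comparison, the paper packages this step differently: it encodes the integral structure and the rotations into a single rank-$md$ lattice $\Lambda=H\ZZ^{md}\subset\RR^{nd}$ with $H=BW$, proves via $B^tB=E_{md}$ that the successive minima of $\Lambda$ are bounded above and below by constants depending only on $K$ and $n$, and then asserts the box count $\prod_l\prod_{i\leq m}(1+B_{l,i})$ directly --- morally the same lattice-point fact, with the same product over $i\leq m$ rather than a single volume term.
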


\begin{proof}
Let us write $\c=(\c',\c'')$, where $\c'=(c_1,\dots,c_m)$ and $\c''=(c_{m+1},\dots,c_n)$. 
Keeping in mind  \eqref{eq:squall}, we first fix  a choice of $\c''\in \left(\fb^{-1} \gfb \right)^{n-m}$ satisfying
$$
|c_i^{(l)}| \ll 
P^{-1+\ve}  Y^{1/d} |u_l| ,
$$
for $m+1\leq i\leq n$ and $1\leq l\leq d$.
Choose $\lambda\in K$ such that 
$(\lambda)=\fb^{-1} \gfb  \fp_2^{-1}$, 
for a suitable prime ideal $\fp_2$ of norm $O(P^\ve).$ 
We may assume  that $\lambda$ is well-shaped, in the sense of \eqref{eq:peake}, 
 on multiplying by a suitable unit. 
 Thus 
 $
 X^{1/d}Y^{1/d}\ll |\lambda^{(l)}| \ll X^{1/d}Y^{1/d+\ve},
 $
 for $1\leq l\leq d$.
Making the change of variables $\c''=\lambda \mathbf{d}''$
and 
recalling that  $\n\fb \asymp X$ and
 $\n\gfb \asymp Y$, we must have  
$$
|d_i^{(l)}| \ll 
P^{-1+\ve}  X^{1/d} |u_l| ,
$$
for $m+1\leq i\leq n$ and $1\leq l\leq d$.

We begin by showing that \eqref{eq:thisa} holds if $L(u)\neq 0$. Thus there exists 
$\c\neq \0$ counted by $L(u)$.
Suppose first that  $\c''\neq \0$. Then there exists $i\in \{m+1,\dots,n\}$ such that 
$$
1\leq |N_{K/\QQ}(d_i)| \ll 
P^{-d+\ve}  X |\nm (u)| ,
$$
whence
$
1\ll P^{-d+\ve} X \mathfrak{H}(u)\ll P^{-d+\ve} Y \mathfrak{H}(u)^2, 
$
since $X\leq Y$. This is satisfactory for \eqref{eq:thisa}.
Suppose next that   $\c'\neq \0$. 
  In particular we have
 \begin{equation}\label{eq:deer}
|(M_l(u)\c^{(l)})_i| \ll P^{-1+\ve}Y^{1/d} |\eth_{l,i}(u)|,\quad 1\leq i\leq n, \,1\leq l\leq d.
\end{equation}
As $M_l(u)$ is an orthogonal matrix, this implies that 
$$|c_j^{(l)}|\ll P^{-1+\ve}Y^{1/d} \max_{1\leq i\leq n} |\eth_{l,i}(u)|,\quad 1\leq j\leq n,\, 1\leq l\leq d,
$$
whence \eqref{eq:carpet} yields
$$
1\ll P^{-d+\ve} Y \prod_{l=1}^d \max_{1\leq i\leq n} |\eth_{l,i}(u)|\ll 
P^{-d+\ve} Y \prod_{l=1}^d (|u_l|+|u_{l_\tau}|) \ll 
P^{-d+\ve} Y \mathfrak{H}(u)^2.
$$
This completes the proof of \eqref{eq:thisa} under the assumption that $L(u)\neq 0$.

Turning now to the estimation of $L(u)$, 
it  readily  follows from a result in Lang
 \cite[Thm.~0 in \S V.1]{lang} that  
 the overall  number of  vectors $\mathbf{d}''$ is 
 \begin{align*}
&\ll  \left(1+  \prod_{l=1}^d
P^{-1+\ve} X^{1/d} |u_l|\right)^{n-m} 
\ll \left(1+ P^{-d+\ve} 
X  \nm(u)\right)^{n-m}\ll P^\ve.
 \end{align*}
 It remains to count the number of vectors $\c'$ associated to a particular choice of $\c''$.
Let $L(u,\fb,\c'')$ be the number of $\c'\in \fo^m$ such that \eqref{eq:deer} holds.
Assume that the matrix $M_l(u)$ is given by $M_l(u)=(m_{l\alpha\beta})_{1\leq \alpha,\beta\leq n}$. Write
$$
M_l(u)=(M_l'(u) M_l''(u)),
$$
with $M_l'(u)=(m_{l\alpha\beta})_{\substack{1\leq \alpha\leq n\\ 1\leq \beta\leq m}}$ and $M_l''(u)=(m_{l\alpha\beta})_{\substack{1\leq \alpha\leq n\\ m< \beta\leq n}}$.
Then we consider the system of inequalities
$$
|M_l'(u)\c'^{(l)}+r_{li}|
\ll P^{-1+\ve}Y^{1/d} |\eth_{l,i}(u)|,\quad 1\leq i\leq n, \,1\leq l\leq d,
$$
where $\r_l=(r_{li})_{1\leq i\leq n}=M_l''(u) \c''^{(l)}$.

Write 
$$c_i=\sum_{l=1}^d c_{il}\ome_l,\quad c_{il}\in \mathbb{Z},\, 1\leq i\leq m.$$
Then, for $1\leq i\leq n$, we can write
$$
(M'_l(u)\c'^{(l)})_i = \sum_{\beta=1}^m m_{li\beta} c_\beta^{(l)} = \sum_{\beta=1}^m m_{li\beta}\sum_{k=1}^d c_{\beta k} \ome_k^{(l)} = \sum_{\beta=1}^m \sum_{k=1}^d m_{li\beta} \ome_k^{(l)} c_{\beta k}.
$$
Let $H$ be the $dn\times dm$ matrix given by
$$H=(m_{li\beta}\ome_k^{(l)})_{(l,i)\times (k,\beta)},$$
with $1\leq l\leq d$, $1\leq i\leq n$, $1\leq k\leq d$, $1\leq \beta\leq m$,  and consider the lattice
$$\Lambda = H \mathbb{Z}^{md}\subset \mathbb{R}^{nd}.$$
Then $L(u,\fb,\c'')$ counts lattice points in $\Lambda$ which lie in a box of side length 
$$\ll P^{-1+\ve}Y^{1/d} |\eth_{l,i}(u)|,\quad 1\leq i\leq n, \,1\leq l\leq d.$$
We claim that the successive minima of the lattice $\Lambda$ are bounded above and below by constants depending only on $K$ and $n$. Taking this on faith, it will then follow that 
$$
L(u,\fb,\c'')\ll \prod_{1\leq l\leq d}\prod_{1\leq i\leq m}   (1+ P^{-1+\ve}Y^{1/d} |\eth_{l,i}(u)|),
$$
which will settle the lemma, on summing over $O(X)$ choices for $\fb\in \mathcal{B}(X,Y)$.

To check the claim, we order the index tuples $(l,i)$ and $(k,\beta)$ in the matrix $H$ lexicographically. Write 
$$A_{lk}=(m_{li\beta}\ome_k^{(l)})_{1\leq i\leq n,1\leq \beta\leq m}= \ome_k^{(l)}(m_{li\beta})_{(i,\beta)}= \ome_k^{(l)}B_l,$$
with the $n\times m$ matrix $B_l=(m_{li\beta})_{1\leq i\leq n,1\leq \beta\leq m}$. Note that $B_l$ has orthogonal and norm one columns for $1\leq l\leq d$. We can then write $H$ as a block matrix
$$H=(A_{lk})_{1\leq l,k\leq d}.$$
Let $B=B(u)$ be the $nd\times md$ matrix which is a diagonal block matrix,  
with the matrices $B_1,\ldots, B_d$ on the diagonal. Let  $W$ be the $md\times md$ block matrix, 
with blocks
$\ome_k^{(l)} E_{m}$ at each  place $1\leq l,k\leq d$, where $E_m$ is the $m$-dimensional identity matrix.
Then 
$$H=BW.$$
Consider the lattice $\Gamma = W\mathbb{Z}^{md}\subset \mathbb{R}^{md}$ and note that this only depends on the basis $\ome_1,\ldots, \ome_d$. Moreover, if $\w\in \Gamma$, then
$$\langle B\w, B\w\rangle = \w^tB^tB\w = \w^t E_{md} \w = \langle \w,\w\rangle.$$
Hence the successive minima of the lattice $\Lambda$ coincide with those of $\Gamma$, which thereby establishes the claim.
\end{proof}

It follows from the previous result that 
\begin{equation}\label{eq:E-so-far}
\begin{split}
E(P;X,Y)\ll &P^\ve 
X^{2-(n-m)/2}
Y^{-m/2}\\
&\times \int_{\mathcal{U}^*}
f(u)\prod_{1\leq i\leq m}\prod_{1\leq l\leq d}   (1+ P^{-1+\ve}Y^{1/d} |\eth_{l,i}(u)|) \d u,
\end{split}
\end{equation}
where $f(u)$ is given by \eqref{eq:f(u)}
and $\mathcal{U}^*$ is the set of $u\in \mathcal{U}$ such that \eqref{eq:thisa} holds.

Recall that the $\eth_{l,i}(u)$ are the eigenvalues of the matrix associated to the quadratic form
$$u_lQ^{(l)}(\x^{(l)})+u_{l_\tau}R^{(l_\tau)}(\x^{(l)}).$$
The next result   collects together a number of properties concerning the  size of the eigenvalues 
$\eth_{l,i}(u)$. 

\begin{lemma}\label{size_EVs}
Assume that Assumptions \ref{ass1}--\ref{ass3} hold and 
suppose that $\tilde m\geq m-1$ is the degree of the polynomial appearing in Assumption \ref{ass3}.
For each $1\leq l\leq d$  we order the eigenvalues $\eth_{l,i}(u)$ such that
$$|\eth_{l,1}(u)|\geq |\eth_{l,2}(u)|\geq \cdots \geq |\eth_{l,n}(u)|.
$$
Then there exist constants $C_1,\dots,C_d>0$ such that the following holds:
\begin{enumerate}
\item
If $|u_{l_\tau}|\leq C_l |u_{l}|$, then 
$$|\eth_{l,1}(u)|\ll |u_l|\quad \text{ and } \quad |\eth_{l,n-1}(u)|\gg |u_l|.$$
Moreover, if $m=1$ and $\tilde m=0$, then 
$$|\eth_{l,1}(u)|\ll |u_l|\quad \text{ and } \quad |\eth_{l,n}(u)|\gg |u_l|.$$
\item
If $|u_{l_\tau}|> C_l |u_l|$, then 
$$|\eth_{l,m+1}(u)\cdots \eth_{l,n}(u)|\gg \frac{|u_l|^{n-\tilde m}}{|u_{l_\tau}|^{m-\tilde m}}. $$
\end{enumerate}
\end{lemma}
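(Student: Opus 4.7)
The plan is to reparametrise the problem so that the pencil structure is visible. I will write $t = u_{l_\tau}/u_l$, giving
\[
u_l\mathbf{A}^{(l)} + u_{l_\tau}\mathbf{B}^{(l_\tau)} = u_l\bigl(\mathbf{A}^{(l)} + t\mathbf{B}^{(l_\tau)}\bigr),
\]
so that the eigenvalues satisfy $\eth_{l,i}(u) = u_l \lambda_i(t)$, where $\lambda_1(t), \ldots, \lambda_n(t)$ are the eigenvalues of the symmetric real matrix $\mathbf{A}^{(l)} + t\mathbf{B}^{(l_\tau)}$, ordered so that $|\lambda_1(t)| \geq \cdots \geq |\lambda_n(t)|$. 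Both parts of the lemma then reduce to a question about this one-parameter pencil, attacked either via operator-norm estimates on a compact interval (case (i)) or via the determinantal product of eigenvalues (case (ii)).

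For case (i), restricted to $|t| \leq C_l$, the operator norm of the symmetric matrix $\mathbf{A}^{(l)} + t\mathbf{B}^{(l_\tau)}$ is bounded uniformly on the compact set $[-C_l, C_l]$, delivering $|\lambda_i(t)| \ll 1$ and hence $|\eth_{l,1}(u)| \ll |u_l|$ immediately. For the matching lower bound, I will invoke Assumption \ref{ass2}, which says that $\mathrm{rank}(\mathbf{A}^{(l)} + t\mathbf{B}^{(l_\tau)}) \geq n-1$ for every real $t$; so at most one $\lambda_i(t)$ vanishes and in particular $|\lambda_{n-1}(t)| > 0$ on all of $\mathbb{R}$. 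Continuity of the $\lambda_i$ in $t$, combined with compactness of $[-C_l, C_l]$, will then yield a uniform constant $c_l > 0$ with $|\lambda_{n-1}(t)| \geq c_l$, giving $|\eth_{l,n-1}(u)| \gg |u_l|$. The special case $m = 1$, $\tilde m = 0$ is easier still: the hypothesis that $\det(\mathbf{A}^{(l)} + t\mathbf{B}^{(l_\tau)})$ is a nonzero constant forces full rank $n$ for every $t$, and the same compactness argument applied to $|\lambda_n(t)|$ delivers the sharper bound $|\eth_{l,n}(u)| \gg |u_l|$.

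For case (ii), I will enlarge $C_l$ and exploit the product of eigenvalues, namely
\[
\prod_{i=1}^n |\eth_{l,i}(u)| = \bigl|\det(u_l \mathbf{A}^{(l)} + u_{l_\tau}\mathbf{B}^{(l_\tau)})\bigr| = |u_l|^n \bigl|\det(\mathbf{A}^{(l)} + t\mathbf{B}^{(l_\tau)})\bigr|.
\]
By Assumption \ref{ass3} the polynomial in $t$ has exact degree $\tilde m$, with nonzero leading coefficient, so choosing $C_l$ sufficiently large that the leading term dominates yields $|\det(\mathbf{A}^{(l)} + t\mathbf{B}^{(l_\tau)})| \gg |t|^{\tilde m}$ for $|t| > C_l$, and therefore
\[
\prod_{i=1}^n |\eth_{l,i}(u)| \gg |u_l|^{n - \tilde m} |u_{l_\tau}|^{\tilde m}.
\]
To isolate the small eigenvalues I will use the crude operator-norm bound $|\eth_{l,i}(u)| \ll |u_l| + |u_{l_\tau}| \ll |u_{l_\tau}|$ valid for every $i$, giving $\prod_{i=1}^m |\eth_{l,i}(u)| \ll |u_{l_\tau}|^m$. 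Dividing this into the determinantal lower bound produces the claim.

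The only genuinely delicate point is the continuity argument underlying case (i): one needs to know that the eigenvalues of $\mathbf{A}^{(l)} + t\mathbf{B}^{(l_\tau)}$, ordered by absolute value, depend continuously on $t$. This follows from the standard continuity of the unordered spectrum of a symmetric real matrix under continuous perturbation, together with the observation that the absolute-value ordering preserves continuity. Beyond this, the proof is a clean application of Assumptions \ref{ass2}--\ref{ass3} and the elementary fact that leading terms dominate for large parameter values.
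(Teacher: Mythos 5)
Your proposal is correct and follows essentially the same route as the paper: the same normalisation by $t=u_{l_\tau}/u_l$, the same compactness argument (the paper phrases it as a subsequence/contradiction argument, which is equivalent to your continuity-plus-compactness formulation) for the lower bound on $|\eth_{l,n-1}(u)|$ via Assumption \ref{ass2}, and the same determinant-divided-by-the-$m$-largest-eigenvalues computation for case (ii) via Assumption \ref{ass3}. No gaps.
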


\begin{proof}
To begin with, according to Assumption \ref{ass3}, for  each $1\leq l\leq d$ there exists a constant $C_l$ such that
$$
|\det (Q^{(l)}+t R^{(l_\tau)})|\gg |t|^{\tilde m},
$$
for $|t|\geq C_l$.

We start by examining the case $|u_{l_\tau}|\leq C_l |u_l|$. The first bound $|\eth_{l,1}(u)|\ll |u_l|$ follows directly from  \eqref{upperboundEV1}. Assume now that $u_l\neq 0$. Note that each of the eigenvalues $\eth_{l,i}(u)$ arises by multiplication with $u_l$ from the eigenvalues of the matrix corresponding to
$$Q^{(l)} + \frac{u_{l_\tau}}{u_l} R^{(l_\tau)}.$$
Write $\tilde\eth_{l,i}(u)$ for those eigenvalues in the same ordering. Assume that the lower bound $|\tilde\eth_{l,n-1}(u)|\gg 1$ is not satisfied. Thus there exists a sequence of $t_j$ in the range $|t_j|\leq C_l$ such that $\tilde\eth_{l,n-1}(t_j) \rightarrow 0 $, for $j\rightarrow \infty$, where we write $\tilde \eth_{l,n-1}(t)$ for the second smallest eigenvalues of $Q^{(l)}+tR^{(l_\tau)}$. As the set of $t$ is compact there is a convergent subsequence, convergent to $t'$ say, with $\rank (Q^{(l)}+t' R^{(l_\tau)})<n-1$. This  contradicts  Assumption \ref{ass2}.

Now we consider the case $m=1$ and $\tilde m=0$. By Assumptions \ref{ass1} and \ref{ass3} we deduce that $\det (Q^{(l)} + tR^{(l_\tau)})$ is a non-zero constant independent of $t$. In particular the rank of this matrix is always  $n$ and the argument above shows that $|\eth_{l,n}(u)|\gg |u_l|$.

Next we consider the case $|u_{l_\tau}|> C_l |u_l|$ and $u_l\neq 0$. 
Again, we write $\tilde \eth_{l,i}(u)$ for the eigenvalues of $Q^{(l)} + \frac{u_{l_\tau}}{u_l} R^{(l_\tau)}.$ Note that we have
$$|\tilde \eth_{l,i}(u)|\ll |\frac{u_{l_\tau}}{u_l}|,\quad 1\leq i\leq n.$$
Moreover, we observe that
$$|\tilde\eth_{l,1}(u)\cdots \tilde\eth_{l,n}(u)| =  | \det( Q^{(l)} + \frac{u_{l_\tau}}{u_l} R^{(l_\tau)})|\gg \left| \frac{u_{l_\tau}}{u_l}\right|^{\tilde m}.$$
We therefore find that
\begin{align*}
|\tilde\eth_{l,m+1}(u)\cdots \tilde\eth_{l,n}(u)|
 &\gg \left| \frac{u_l}{u_{l_\tau}}\right|^m\left| \frac{u_{l_\tau}}{u_l}\right|^{\tilde m}
 = \left| \frac{u_l}{u_{l_\tau}}\right|^{m-\tilde m} 
\end{align*}
From this we obtain the lower bound
$$|\eth_{l,m+1}(u)\cdots \eth_{l,n}(u)|\gg |u_l|^{n-m}\left| \frac{u_l}{u_{l_\tau}}\right|^{m-\tilde m} = \frac{|u_l|^{n-\tilde m}}{|u_{l_\tau}|^{m-\tilde m}},
$$
which completes the proof of the lemma.
\end{proof}

We now continue with our analysis of  $E(P;X,Y)$ 
in \eqref{eq:E-so-far}. 
Let $E_1(P;X,Y)$ denote the overall contribution from the case $Y\geq P^d$, and let 
$E_2(P;X,Y)$ denote the remaining contribution. The following pair of results treats these two quantities in turn.

\begin{lemma}\label{lem:E1}
Assume $\tilde m\geq m-1$. Then  $E_1(P;X,Y)\ll P^{d(2-n/2+m/2)+\ve}+P^{-dm+\ve}$. 
\end{lemma}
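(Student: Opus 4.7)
The plan is to start from the bound \eqref{eq:ENPXY}, specialise to the regime $Y\geq P^d$, and combine the eigenvalue analysis of Lemma \ref{size_EVs} with the height-function integral estimates \eqref{eq:BoundC}--\eqref{eq:BoundB}.

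First, I would note that when $Y\geq P^d$, the parameter $T:=P^{-1+\ve}Y^{1/d}$ satisfies $T\gg 1$, and the constraint $P^{-d+\ve}Y\mathfrak{H}(u)^2\gg 1$ defining $\mathcal{U}^*$ is automatic on $\mathcal{U}$, so $\mathcal{U}^*$ reduces to $\{u\in V:\mathfrak{H}(u)\ll P^{d+\ve}/X\}$. The elementary inequality
$$\min(1,a^{-1/2})(1+Ta)\ll T(1+a^{1/2}), \qquad a\geq 0,\ T\geq 1,$$
then allows me to merge each of the first $m$ eigenvalue factors of $f(u)$ with the matching factor $(1+T|\eth_{l,i}(u)|)$. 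Extracting the prefactor $T^{dm}=P^{-dm+\ve}Y^m$, this reduces the estimation to bounding
$$I(P,X):=\int_{\mathfrak{H}(u)\ll P^{d+\ve}/X}\prod_{l=1}^d\left(\prod_{i=1}^m(1+|\eth_{l,i}(u)|^{1/2})\prod_{i=m+1}^n\min(1,|\eth_{l,i}(u)|^{-1/2})\right)du.$$

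Next, I would partition the $u$-space according to whether each coordinate $l$ satisfies $|u_{l_\tau}|\leq C_l|u_l|$ (case (1) of Lemma \ref{size_EVs}) or $|u_{l_\tau}|>C_l|u_l|$ (case (2)). In case (1), the bounds $|\eth_{l,i}|\asymp|u_l|$ for $i\leq n-1$ collapse the $l$-slot to a factor $\ll(1+|u_l|)^{(2m-n+1)/2}$. In case (2), the lower bound on $|\eth_{l,m+1}\cdots\eth_{l,n}|$ provided by Lemma \ref{size_EVs}(2), combined with the trivial upper bound $|\eth_{l,i}|\ll|u_{l_\tau}|$ for $i\leq m$, yields an analogous estimate mixing $|u_l|$ and $|u_{l_\tau}|$. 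Taking the product over $l$ produces a bound of the shape $\mathfrak{H}(u)^\alpha$ for an explicit exponent $\alpha$ depending on the subcase, which is integrated using \eqref{eq:BoundC} when $\alpha<-1$ and \eqref{eq:BoundB} otherwise.

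Finally, one multiplies the resulting integral bound by the outer prefactor $P^\ve X^{2-(n-m)/2}Y^{m/2}P^{-dm}$ and optimises over the trade-offs $X\leq Y\leq X^2$ and $X\ll P^d$. The first term $P^{d(2-n/2+m/2)+\ve}$ in the lemma arises by replacing $X$ by $P^d$ (using $Y\leq X^2$ to convert $Y^{m/2}$ into $X^m$), while the second term $P^{-dm+\ve}$ appears when one can instead exploit $X\geq 1$ to retain the full savings $P^{-dm}$. The main obstacle will be bookkeeping across the $2^d$ subcases and choosing, in each one, the sharpest way to bound $X$ versus $Y$; the delicate one is case (2), where the eigenvalue bound is genuinely weaker than in case (1) and depends on $\tilde m\geq m-1$. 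Verifying that the convergence threshold $\alpha<-1$ needed for \eqref{eq:BoundC} is reached in the relevant subcases is precisely where the hypothesis $\tilde m\geq m-1$ enters.
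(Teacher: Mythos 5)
Your proposal follows essentially the same route as the paper: for $Y\geq P^d$ one extracts the factor $(P^{-1+\ve}Y^{1/d})^{dm}$ via $\min(1,a^{-1/2})(1+Ta)\ll T(1+a)^{1/2}$, applies the two cases of Lemma \ref{size_EVs} coordinate-by-coordinate, reduces to a power of $\mathfrak{H}(u)$ via \eqref{eq:carpet}, integrates with \eqref{eq:BoundC}--\eqref{eq:BoundB}, and optimises using $Y\leq X^2$ and $X\ll P^d$. The only cosmetic difference is that the paper merges the two eigenvalue cases into the single bound $(1+|u_l|+|u_{l_\tau}|)^{1/2}\min(1,|u_l|^{-(n-m)/2})$ rather than tracking $2^d$ subcases, and the hypothesis $\tilde m\geq m-1$ is used precisely to make the case-(2) product bound $\ll |u_{l_\tau}|^{1/2}/|u_l|^{(n-m+1)/2}$, as you anticipated.
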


\begin{proof}
On recalling the definition \eqref{eq:f(u)} of
 $f(u)$, we deduce from 
 \eqref{eq:E-so-far} that 
 \begin{align*}
E_1(P;X,&Y)
\ll P^{\ve} X^{2-(n-m)/2}Y^{-m/2}P^{-md}Y^m \\ & \times\int_{\substack{\mathcal{U}^*}} \left(\prod_{1\leq l\leq d} (1+|u_l|+|u_{l_\tau}|)^{m/2}\right)\prod_{1\leq l\leq d} \prod_{m<i\leq n}  \min\left( 1, \frac{1}{|\eth_{l,i}(u)|^{1/2}}\right) \d u,
\end{align*}
since \eqref{upperboundEV1} implies that 
$$
\prod_{1\leq i\leq m} (1+ |\eth_{l,i}(u)|)^{1/2}\ll  (1+|u_l|+|u_{l_\tau}|)^{m/2}.
$$
Here, we recall that $\mathcal{U}^*$ is the set of $u\in \mathcal{U}$ such that \eqref{eq:thisa} holds.
Consider for a moment a fixed value of $l$. If $C_l|u_l| \geq |u_{l_\tau}|$, then  Lemma \ref{size_EVs} yields
$$
\prod_{m<i\leq n}  \min\left( 1, \frac{1}{|\eth_{l,i}(u)|^{1/2}}\right) \ll \min\left(1,|u_l|^{-(n-m-1)/2}\right).
$$
If $C_l|u_l| < |u_{l_\tau}|$ and $\tilde m\geq m-1$, then Lemma \ref{size_EVs} yields
\begin{align*}
\prod_{m<i\leq n}  \min\left( 1, \frac{1}{|\eth_{l,i}(u)|^{1/2}}\right) 
&\ll   \min\left( 1, \frac{1}{|\eth_{l,m+1}(u)\cdots \eth_{l,n}(u)|^{1/2}}\right)\\
 &\ll \min\left( 1, \frac{|u_{l_\tau}|^{1/2}}{|u_l|^{(n-m+1)/2}} \right).
 \end{align*}
In either of these two cases we therefore have
$$\prod_{m<i\leq n}  \min\left( 1, \frac{1}{|\eth_{l,i}(u)|^{1/2}}\right) \ll (1+|u_l|+|u_{l_\tau}|)^{1/2} \min\left(1,|u_l|^{-(n-m)/2}\right),$$
whence
\begin{align*} E_1(P;X,Y)\ll & P^{\ve} X^{2-(n-m)/2}Y^{-m/2}P^{-md}Y^m \\ &\times  \int_{\mathcal{U}} 
\left(\prod_{1\leq l\leq d} (1+|u_l|+|u_{l_\tau}|)^{(m+1)/2}\right)\prod_{1\leq l\leq d}   \min\left( 1, \frac{1}{|u_l|^{(n-m)/2}} \right) \d u.
\end{align*}

It follows from \eqref{eq:carpet} that 
\begin{align*}
E_1(P;X,Y) &\ll P^{\ve} X^{2-(n-m)/2}Y^{-m/2}  P^{-md}Y^m \int_{\mathfrak{H}(u)\leq P^{d+\ve}/X} \mathfrak{H}(u)^{m+1-(n-m)/2} \d u\\
&\ll P^{\ve} X^{2-(n-m)/2}Y^{-m/2} P^{-dm} Y^{m} (1+ (P^d/X)^{2+m-(n-m)/2})\\
&\ll P^{\ve} X^{-m} Y^{m/2} P^{d(2-n/2+m/2)} + P^{\ve} X^{2-(n-m)/2}Y^{m/2} P^{-dm}.
\end{align*}
The contribution gets maximal for $Y\asymp X^2$, in which  case we get the upper bound
\begin{align*}
E_1(P;X,Y)
&\ll P^{\ve} X^{-m}  X^{m} P^{d(2-n/2+m/2)}+ P^{\ve} X^{2-n/2 + 3m/2} P^{-dm} \\
&\ll P^{d(2-n/2+m/2)+\ve} +  X^{2-n/2 + 3m/2} P^{-dm+\ve}.
\end{align*}
The first term is satisfactory for the lemma. 
If  $2-n/2+3m/2\leq 0$, then  the second term is 
$O( P^{-dm+\ve})$, which is satisfactory.  
If $n\leq 3+3m$, on the other hand, then we take $X\ll P^d$ and get the satisfactory upper bound 
$O(P^{d(2-n/2+m/2)+\ve})$.
\end{proof}

\begin{lemma}\label{lem:E2}
Assume that $n\geq m+4$
and $\tilde m \geq m-1$. Let 
$$
\kappa =
\begin{cases}
1 & \text{ if $m=1$ and $\tilde m=0$,}\\ 
0 & \text{ otherwise.}
\end{cases}
$$ 
Then $E_2(P;X,Y)$ is 
$$
\ll 
P^{-d(n-m-4+\kappa)/4+\ve}+
 P^{-1/2+\ve }+
 P^{-2m + d(3m + 4-\kappa - n)/2+\ve}+
  P^{-2m + d(3m + 4-\kappa - n)/4+\ve}.
$$ 
\end{lemma}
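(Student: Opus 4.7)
The plan is to proceed in the spirit of Lemma \ref{lem:E1}, but to exploit the new constraint $Y < P^d$ defining $E_2(P;X,Y)$. In this regime, condition \eqref{eq:thisa} of Lemma \ref{lem:wish} yields the non-trivial lower bound
\[
\mathfrak{H}(u) \gg (P^d/Y)^{1/2}P^{-\ve}
\]
on $\mathcal{U}^*$. Together with the upper bound $\mathfrak{H}(u) \leq P^{d+\ve}/X$ from $\mathcal{U}$, this confines $\mathfrak{H}(u)$ to a bounded dyadic range and is the principal source of savings. Moreover, since $Y<P^d$ we have $P^{-1+\ve}Y^{1/d} < P^\ve$, so the factors $1 + P^{-1+\ve}Y^{1/d}|\eth_{l,i}(u)|$ may now contribute only the trivial $1$, unlike in the proof of Lemma \ref{lem:E1}.

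Starting from \eqref{eq:E-so-far}, I would first bound the product over $i \leq m$ by $\prod_{l=1}^d (1 + P^{-1+\ve}Y^{1/d}(|u_l|+|u_{l_\tau}|))^m$ via \eqref{upperboundEV1}. Expanding this yields a sum indexed by subsets $T \subseteq \{1,\dots,d\}$, where the contribution for $T$ carries the weight $(P^{-1+\ve}Y^{1/d})^{m|T|}\prod_{l\in T}(|u_l|+|u_{l_\tau}|)^m$. The factor $f(u)$ is controlled by partitioning $\{1,\dots,d\}=S\sqcup \bar{S}$ according to whether $|u_{l_\tau}|\leq C_l|u_l|$, with $C_l$ as in Lemma \ref{size_EVs}. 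For $l \in S$, Lemma \ref{size_EVs}(1) gives $\prod_{i=1}^{n-1+\kappa}\min(1,|\eth_{l,i}(u)|^{-1/2})\ll \min(1,|u_l|^{-(n-1+\kappa)/2})$, with the remaining eigenvalue (when $\kappa=0$) bounded trivially. For $l \in \bar{S}$, Lemma \ref{size_EVs}(2) controls the product over $m+1\leq i\leq n$, and \eqref{upperboundEV1} absorbs the rest.

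For each combination of $T$ and $(S,\bar{S})$ I would then integrate over $\mathcal{U}^*$ using \eqref{eq:BoundA}--\eqref{eq:BoundC}, producing a power bound in $P$, $X$, $Y$. Finally I would optimise over $X,Y$ subject to $1\leq X\leq Y\leq X^2$ and $Y<P^d$. The four terms in the statement correspond to distinct optimisation regimes: the first and second arise from the trivial choice $T=\emptyset$, optimised at $Y\asymp X^2$ (yielding the first term via the $(P^d/Y)^{1/2}$ gain combined with $f$ giving $\mathfrak{H}(u)^{-(n-m-1+\kappa)d/2}$) and at the boundary where the lower bound on $\mathfrak{H}(u)$ alone produces $P^{-1/2+\ve}$; the third and fourth arise from the opposite extreme $T=\{1,\dots,d\}$, where the weight $(P^{-1}Y^{1/d})^{md}$ together with $Y^{-m/2}$ produces the $-2m$ exponent, and the two scalings $Y\asymp X$ versus $Y\asymp X^2$ in the $X,Y$-optimisation generate the exponents $d(3m+4-\kappa-n)/2$ and $d(3m+4-\kappa-n)/4$ respectively.

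The main obstacle is the combinatorial bookkeeping: each of the $d$ coordinates carries a choice of $S$ vs $\bar{S}$ and whether the factor $(1+P^{-1+\ve}Y^{1/d}(|u_l|+|u_{l_\tau}|))^m$ is taken trivially or not, leading to many sub-cases, each of which must be estimated via \eqref{eq:BoundA}--\eqref{eq:BoundC} and optimised over the $(X,Y)$ region. The $\kappa$-improvement (for $m=1$ and $\tilde{m}=0$) reflects that in this special case all $n$ eigenvalues are large in case (1) of Lemma \ref{size_EVs}, yielding an extra half-power savings in $f(u)$ per coordinate $l \in S$.
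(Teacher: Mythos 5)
Your overall architecture matches the paper's: restrict to $\mathcal{U}^*$ via \eqref{eq:thisa}, invoke the dichotomy $|u_{l_\tau}|\leq C_l|u_l|$ versus $|u_{l_\tau}|>C_l|u_l|$ from Lemma \ref{size_EVs}, and finish with \eqref{eq:BoundA}--\eqref{eq:BoundC} and the constraints \eqref{eq:XY}. One preliminary remark is simply false, however: from $Y<P^d$ you cannot conclude that the factors $1+P^{-1+\ve}Y^{1/d}|\eth_{l,i}(u)|$ "contribute only the trivial $1$", since on $\mathcal{U}$ the eigenvalues can be as large as $P^{d+\ve}/X$ and these factors can then be of size $P^{d-1+\ve}Y^{1/d}/X$. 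Your later plan keeps the non-trivial terms, so this is perhaps a slip, but it misplaces where the difficulty sits.

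The genuine gap is in how you decouple $f(u)$ from the counting factor for $i\leq m$. You bound the counting factor by the \emph{first} power, $\prod_l(1+P^{-1+\ve}Y^{1/d}(|u_l|+|u_{l_\tau}|))^m$, and hope to recover the loss from the factors $\min(1,|\eth_{l,i}(u)|^{-1/2})$ with $i\leq m$. For $l\in S$ this works, since Lemma \ref{size_EVs}(1) makes $\eth_{l,1},\dots,\eth_{l,n-1}$ all of size $\asymp|u_l|$. For $l\in\bar S$ it does not: Lemma \ref{size_EVs}(2) gives no lower bound on $|\eth_{l,i}(u)|$ for $i\leq m$, and \eqref{upperboundEV1} is an upper bound, so "absorbing the rest" means bounding those min-factors by $1$. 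You are then left with an uncompensated factor of up to $(|u_l|+|u_{l_\tau}|)^{m/2}$ for each $l\in\bar S$, i.e.\ up to $\mathfrak{H}(u)^{m}\leq(P^{d+\ve}/X)^m$ overall, and the optimisation over $X,Y$ no longer closes for small $X$. The paper's fix is the elementary pointwise inequality
$$
\min\left(1,|\eth_{l,i}(u)|^{-1/2}\right)\left(1+P^{-1+\ve}Y^{1/d}|\eth_{l,i}(u)|\right)\ll 1+P^{-1+\ve}Y^{1/d}|\eth_{l,i}(u)|^{1/2},
$$
applied to each pair $(l,i)$ with $i\leq m$ \emph{before} \eqref{upperboundEV1} is invoked. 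The resulting half power is what produces the threshold $(P/Y^{1/d})^{2}$, and hence — together with the observation (also missing from your sketch) that if one $|u_l|\gg(P/Y^{1/d})^{2}$ then at least \emph{two} indices $l$ satisfy $|u_l|+|u_{l_\tau}|\geq(P/Y^{1/d})^{2}$ — the factor $(P^{-1}Y^{1/d})^{2m}=P^{-2m}Y^{2m/d}$ that accounts for the exponent $-2m$ in the third and fourth terms of the lemma.
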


\begin{proof}
For $1\leq i\leq m$, 
we clearly  have
$$ \min\left( 1, \frac{1}{|\eth_{l,i}(u)|^{1/2}}\right) (1+ P^{-1+\ve}Y^{1/d} |\eth_{l,i}(u)|) \ll 1+P^{-1+\ve}Y^{1/d} |\eth_{l,i}(u)|^{1/2}.$$
Hence we find that $E_2(P;X,Y)$ is
\begin{align*}
&\ll P^\ve 
X^{2-(n-m)/2}
Y^{-m/2}\\  &\times \int_{\mathcal{U}^*}\left(\prod_{1\leq l\leq d} \prod_{m< i\leq n} \min\left( 1, \frac{1}{|\eth_{l,i}(u)|^{1/2}}\right) \right)\prod_{1\leq l\leq d} \prod_{1\leq i\leq m}  (1+ P^{-1+\ve}Y^{1/d} |\eth_{l,i}(u)|^{1/2}) \d u
\end{align*}
If $C_l|u_l|\geq |u_{l_\tau}|$, then Lemma \ref{size_EVs} leads to the bound
$$\prod_{m<i\leq n} \min \left( 1, \frac{1}{|\eth_{l,i}(u)|^{1/2}}\right) \ll \min \left(1, |u_l|
^{-(n-m-1+\kappa)/2} 
\right).$$
For the case $C_l |u_l| < |u_{l_\tau}|$ we still have
\begin{align*}
\prod_{m<i\leq n}  \min\left( 1, \frac{1}{|\eth_{l,i}(u)|^{1/2}}\right) 
&\ll \min\left( 1, \frac{|u_{l_\tau}|^{1/2}}{|u_l|^{(n-m+1)/2}} \right),
\end{align*}
since $\tilde m\geq m-1$.
We now deduce that in either case we have
$$
\prod_{m<i\leq n}  \min\left( 1, \frac{1}{|\eth_{l,i}(u)|^{1/2}}\right) \ll (1+|u_l|+|u_{l_\tau}|)^{1/2} \min\left(1,|u_l|^{-(n-m+\kap)/2}\right).
$$
It now follows from \eqref{eq:carpet} that $E_2(P;X,Y)$ is 
\begin{align*}
\ll~& P^\ve 
X^{2-(n-m)/2}
Y^{-m/2}\\  & \times \int_{\mathcal{U}^*} \mathfrak{H}(u)^{1-(n-m+\kappa)/2} 
\prod_{1\leq l\leq d} \prod_{1\leq i\leq m}  (1+ P^{-1+\ve}Y^{1/d} (|u_l|+|u_{l_\tau}|)^{1/2}) \d u\\
\ll~& P^\ve 
X^{2-(n-m)/2}
Y^{-m/2}\\  &\times \int_{\mathcal{U}^*}\mathfrak{H}(u)^{-\frac{n-m-2+\kappa}{2}}
\prod_{1\leq l\leq d}   (1+ P^{-1+\ve}Y^{1/d} (|u_l|+|u_{l_\tau}|)^{1/2})^m \d u.
\end{align*}
Let $I_1$ denote the contribution to the integral from those $u$ for which 
there exists at least one  $u_l$ with $|u_l|\gg (P/Y^{1/d})^{2} $, and let 
$I_2$ denote the remaining contribution.

On recalling that $\mathcal{U}^*$ is the set of $u\in \mathcal{U}$ such that \eqref{eq:thisa} holds, 
it is clear that 
 $$
 I_2\ll
  \int_{\substack{P^{d/2-\ve}/Y^{1/2} \ll \mathfrak{H}(u)\ll P^{d +\ve}/X\\ |u_l|\ll P^2Y^{-2/d}, ~1\leq l\leq d}}\mathfrak{H}(u)^{-\frac{n-m-2+\kappa}{2}} \d u.
$$
Turning to $I_1$, we see that 
\begin{align*}
\prod_{1\leq l\leq d}   (1+ P^{-1+\ve}&Y^{1/d} (|u_l|+|u_{l_\tau}|)^{1/2})^m\\
&\ll P^\ve
\prod_{\substack{1\leq l\leq d\\ |u_l|+|u_{l_\tau}|\geq (P/Y^{1/d})^{2}}}  
( P^{-1}Y^{1/d} (|u_l|+|u_{l_\tau}|)^{1/2})^m\\
&\ll P^\ve \mathfrak{H}(u)^m(P^{-1}Y^{1/d})^{m\sharp\{1\leq l\leq d:  |u_l|+|u_{l_\tau}|\geq (P/Y^{1/d})^{2} \}},
\end{align*}
by \eqref{eq:carpet}.
But if there is one $u_l$ with $|u_l|\gg (P/Y^{1/d})^{2} $, then clearly 
$$\sharp\{1\leq l\leq d:  |u_l|+|u_{l_\tau}|\geq (P/Y^{1/d})^{2} \}\geq 2.$$
Hence, since $P^{-1}Y^{1/d}\ll 1$, it now follows that 
$$
I_1
\ll P^\ve
 \int_{P^{d/2-\ve}/Y^{1/2} \ll \mathfrak{H}(u)\ll P^{d +\ve}/X}\mathfrak{H}(u)^{-\frac{n-m-2+\kappa}{2}+m} (P^{-1}Y^{1/d})^{2m} \d u.
 $$
In summary we have shown that 
$$
E_2(P;X,Y)
 \ll P^\ve 
X^{2-(n-m)/2}
Y^{-m/2} 
\left(I_1+I_2\right),
$$
with $I_1,I_2$ as above.

Since $n\geq m+4$,  the exponent of $\mathfrak{H}(u)$ in $I_2$ is less than or equal to $-1$. 
If 
$\frac{n-m-2+\kappa}{2}>1$,  then it follows from  \eqref{eq:BoundA} that 
$$
I_2\ll P^\ve 
 (P^{d/2}/Y^{1/2})^{-\frac{n-m-2+\kappa}{2}+1}.
$$ 
However, if   $\frac{n-m-2+\kappa}{2}=1$, then we apply 
\eqref{eq:BoundB}  to deduce that the same bound holds. 

On the other hand, 
 \eqref{eq:BoundA} and \eqref{eq:BoundB} also yield
\begin{align*}
I_1
&\ll  
   P^{-2m+\ve}Y^{2m/d}\left((P^d/X)^{-\frac{n-m-2+\kappa}{2}+m+1}   
   +(P^{d/2}/Y^{1/2})^{-\frac{n-m-2+\kappa}{2}+m+1} \right)\\
&\ll X^{\frac{n-m-2+\kappa}{2}-m-1}
Y^{2m/d} P^{-2m-d(\frac{n-m-2+\kappa}{2})+md+d+\ve}    \\
&\quad +
Y^{2m/d+n/4-3m/4+\kappa/4-1} P^{-2m+d/2( -\frac{n-m-2+\kappa}{2}+m+1)+\ve}.
\end{align*}
We conclude that 
\begin{align*}
E_2(P;X,Y)
& \ll 
X^{\kappa/2  -m }
Y^{-m/2+2m/d} P^{-2m + 3md/2 + (4-\kappa)d/2 - dn/2+\ve}   \\
&+ 
X^{2-(n-m)/2}
Y^{ 2m/d+n/4-5m/4-(4-\kappa)/4}    P^{-2m - dn/4+3md/4+ (4-\kappa)d/4+\ve}   \\
& + 
X^{2-(n-m)/2}
Y^{-3m/4+n/4-(4-\kappa)/4}  P^{d/2(-\frac{n-m-2+\kappa}{2}+1)+\ve}.
\end{align*}

We now consider these three terms separately, starting with the third and 
recalling that $n-m\geq 4$.
If 
$-3m/4+n/4-(4-\kappa)/4\leq 0,$
then we get an upper bound
$$
\ll P^{d/2(-\frac{n-m-2+\kappa}{2}+1)+\ve}\ll 
P^{-d(n-m-4+\kappa)/4+\ve}.
$$
In the opposite case we get the upper bound
$
\ll P^{-dm/2+\ve }\ll P^{-m+\ve},
$
on using  $Y\leq P^d$ and $d\geq 2$.

We now turn to the second term. If
$2m/d+n/4-5m/4-(4-\kappa)/4\leq 0,$
then we get the upper bound 
$$
\ll P^{-2m - dn/4+3md/4+ (4-\kappa)d/4+\ve}.
$$
In the opposite case, on using  $X\geq Y^{1/2}$,  we get the  upper bound 
\begin{align*}
&\ll Y^{1-(n-m)/4}
Y^{ 2m/d+n/4-5m/4-(4-\kappa)/4}    P^{-2m - dn/4+3md/4+ (4-\kappa)d/4+\ve}\\
&\ll  Y^{\kappa/4-m+2m/d}    P^{-2m - dn/4+3md/4+ (4-\kappa)d/4+\ve}.
\end{align*}
If $d\geq 3$ or $\kappa =0$, then we reduce to the case above. If $d=2$ and $\kappa =1$, on the other hand,  we obtain the upper bound
$$\ll P^{ d/4} P^{-2m - dn/4+3md/4+ (4-\kappa)d/4}
\ll P^{-(n+m-4)/2+\ve},
$$
since $Y\leq P^d$. Clearly $(n+m-4)/2\geq m$
if $n\geq m+4$, whence this case contributes $O(P^{-m+\ve})$, which is satisfactory. 

It remains to deal with the first term. Again we use the lower bound $X\geq Y^{1/2}$, allowing us to  bound the first term by
\begin{align*}
&\ll 
Y^{\kappa/4  -m/2 }
Y^{-m/2+2m/d} P^{-2m + 3md/2 + (4-\kappa)d/2 - dn/2+\ve} \\
&\ll Y^{\kappa/4-m+2m/d} P^{-2m + 3md/2 + (4-\kappa)d/2 - dn/2+\ve}.
\end{align*}
If $d\geq 3$ or $\kappa =0$ then 
we get $O(P^{-2m + 3md/2 + (4-\kappa)d/2 - dn/2+\ve})$, which is satisfactory. 
Alternatively, if $d=2$ and $\kappa=1$, then we get 
$$\ll P^{ 1/2} P^{-2m + 3md/2 + (4-\kappa)d/2 - dn/2+\ve}
\ll P^{-(n-m-7/2)+\ve}.
$$
This is $\ll P^{-1/2+\ve}$ since $n\geq m+4$, which thereby completes the proof of the lemma. 
\end{proof}

It remains to combine Lemmas \ref{lem:E1} and \ref{lem:E2}.
We make the assumption 
$$
n\geq m+5.
$$
Under this assumption,  the bound in  Lemma 
\ref{lem:E1} is $O(P^{-d/2+\ve})$.  
Moreover, the bound in Lemma \ref{lem:E2} is 
$$
\ll P^{-d(1+\kappa)/4+\ve}+
 P^{-1/2+\ve } +
  P^{-2m + d(3m + 4-\kappa - n)/2+\ve}+
   P^{-2m + d(3m + 4-\kappa - n)/4+\ve}.
$$ 
Hence, since $d\geq 2$ and $m\geq 1$,  it finally follows that 
\eqref{eq:end} holds for a suitable $\Delta>0$, provided that  $n\geq m+5$ and 
$$
n> 3m+4-\frac{4m}{d}-\kappa,
$$
where  $\kappa$ is defined in the statement of Lemma \ref{lem:E2}.

Suppose first that $m=1$ and place ourselves under the  hypotheses of Theorem~\ref{t:non-diag-1}. Then 
Assumptions \ref{ass1}--\ref{ass3} hold with $\tilde m=0$. Hence $\kappa=1$
and the condition on $n$ reduces to  $n\geq 6$, 
as required for Theorem \ref{t:non-diag-1}.
Assume now that $m\geq 1$, but $\kappa=0$.
Since  $d\geq 2$, we have 
$3m+4-4m/d\geq m+4$,
from which the statement of Theorem \ref{t:non-diag} follows.

\section{Inhomogeneous case: proof of Theorem \ref{t:diag}}\label{s:inhom}

In this section we complete the proof of Theorem \ref{t:diag}.
We note that the quadratic form in \eqref{eq:special} is a special case of \eqref{eq:special-0}, with 
$\mathbf{A}=\diag(a_1,\dots,a_n)$ and 
$\mathbf{B}=\diag(b_1,\dots,b_m,0,\dots,0)$. Hence 
Corollary \ref{cor:2} applies to the situation considered in 
Theorem \ref{t:diag}. In particular, assuming that $n>4$, 
the argument in the previous section shows that 
the sum over $\fb$ 
in Lemma \ref{lem:main_term} 
can be extended to infinity  with acceptable error. Since the assumption  $n>4$  is implied by the hypotheses in 
 Theorem \ref{t:diag}, this leaves us free to focus our efforts on  proving \eqref{eq:end}.

In the present setting, 
it will be vital to obtain additional cancellation from the sum over 
primitive characters  in $S_\fb(N;\m)$. 
We plan to improve on Corollary \ref{cor:2} in generic situations, beginning with an examination of 
a particular exponential sum modulo degree 1 prime ideals.
The saving we shall achieve is linked to the fact that $N\neq 0$ and will also involve the special generalised  quadratic form
\begin{equation}\label{eq:def-G}
G(\x)=a_1\cdots a_n b_1\cdots b_m  \left(\frac{x_1^2}{a_1}+\cdots+\frac{x_n^2}{a_n}
+
\frac{(x_1^\tau)^2}{b_1}+\cdots+\frac{(x_m^\tau)^2}{b_m}
\right),
\end{equation}
that is the analogue of the {\em dual form} in our setting. (Note that it has coefficients in $\fo$.)
For any unramified prime ideal $\fp$ and 
any vector $\v\in \widehat{\gfp}^n$, it will be convenient to observe  that 
$\ord_\fp(G(\v))\geq -2$, 
since $\ord_\fp (v_i)\geq -1$ and $\ord_{\fp} (v_i^\tau)\geq -1$, for any $v_i\in  \widehat{\gfp}$.
With this in mind, 
we proceed by proving the following bound for $S_\fp(N;\v)$.

\begin{lemma}\label{lem:prime}
Let 
$\fp$ be a prime ideal of residue degree $1$ and let  $\v\in \widehat{\gfp}^n$. Then 
$$
S_\mathfrak{p}  (N;\v)\ll 
(\n\fp)^{\theta_{\fp}(\v)+(3n-m)/2},  
$$
where
$$
\theta_\fp(\v)=
\begin{cases}
1 & \text{ if $\fp\mid N$  and $\ord_\fp(G(\v))\geq -1$,}\\
\frac{1}{2} & \text{ otherwise.}
\end{cases}
$$
\end{lemma}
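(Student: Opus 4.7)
The plan is to reduce $S_\fp(N;\v)$ to classical quadratic Gauss sums over $\FF_p$ --- exploiting residue degree one --- together with a single Kloosterman-type sum in $a$, which is then handled by Weil's bound. Set $\fq = \fp^{\tau^{-1}}$ and distinguish whether $\fp = \fq$ or not. When $\fp = \fq$ one has $\gfp = \fp$ and, since $\fo/\fp \cong \FF_p$ admits only the trivial automorphism, $\tau$ acts trivially mod $\fp$. Thus $F(\x) \equiv \sum_{i \le m}(a_i+b_i)x_i^2 + \sum_{i > m} a_i x_i^2 \pmod \fp$, and the standard Gauss sum/Kloosterman machinery yields $S_\fp(N;\v) \ll p^{n/2 + \theta_\fp(\v)}$, which is strictly smaller than the claim for $n \ge m$. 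The essential case is therefore $\fp \neq \fq$.

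In this essential case $\gfp = \fp\fq$. Via CRT (Lemma~\ref{lem:skinner-3}) I decompose $\fo/\gfp \cong \fo/\fp \times \fo/\fq$ using idempotents $e_\fp, e_\fq$ with $e_\fp + e_\fq = 1$, $e_\fp \equiv 1 \pmod \fp$, $e_\fp \equiv 0 \pmod \fq$, and write $\x = e_\fp \y + e_\fq \z$. Using the identity $\fq^\tau = \fp$ and the resulting congruence $e_\fq^\tau \equiv 1 \pmod \fp$, a direct calculation yields
\[ F(\x) \equiv \sum_{i=1}^n a_i y_i^2 + \sum_{i=1}^m b_i (z_i^\tau)^2 \pmod \fp, \]
while $\psi(\v \cdot \x)$ factors into a character mod $\fp$ (in $\y$) and a character mod $\fq$ (in $\z$), thanks to the inclusions $v_i e_\fp \in \fp^{-1}\fd^{-1}$ and $v_i e_\fq \in \fq^{-1}\fd^{-1}$.

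The variables $z_{m+1},\dots,z_n$ appear only linearly; orthogonality over $\fo/\fq$ contributes a factor $p^{n-m}$ provided the corresponding $v_i$ are $\fq$-integral and zero otherwise. After the bijective substitution $\tilde z_i = z_i^\tau \in \fo/\fp$ for $i \leq m$ (which is legitimate by the Galois-invariance of $\tr$ via $\tr(\alpha\beta^{\tau^{-1}}) = \tr(\alpha^\tau \beta)$), the residual inner sum becomes a product of $n+m$ independent one-variable classical quadratic Gauss sums over $\FF_p$, contributing $p^{(n+m)/2}$. Completion of the square in each Gauss sum produces an overall phase $\psi(-\overline{4\gamma a}\, H(\v))$, where $H(\v)$ is, up to $\fp$-units, the reduction of the dual form $G$ from \eqref{eq:def-G} evaluated at $\v$.

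The remaining outer sum over $a \in (\fo/\fp)^*$ is then a Kloosterman-type sum
\[ \sum_{a \in (\fo/\fp)^*} \chi(a)^{n+m}\, \psi(-\gamma a N - \overline{4\gamma a}\, H(\v)), \]
with $\chi$ the Legendre character on $\FF_p$. Weil's bound delivers $O(p^{1/2})$ generically, degrading to $O(p)$ precisely when both ``frequencies'' vanish mod $\fp$, i.e.\ when $\fp \mid N$ and $H(\v) \equiv 0 \pmod \fp$; the latter is equivalent to $\ord_\fp(G(\v)) \ge -1$ after clearing the $\fp$-unit factor $a_1\cdots a_n b_1\cdots b_m$. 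This matches $\theta_\fp(\v)$, and combining the contributions $p^{n-m} \cdot p^{(n+m)/2} \cdot p^{\theta_\fp(\v)} = p^{(3n-m)/2 + \theta_\fp(\v)}$ yields the stated bound. The most delicate step will be the CRT bookkeeping in the essential case: verifying that both the quadratic and the linear parts factorise cleanly under $\x = e_\fp\y + e_\fq\z$, and confirming that the form $H(\v)$ emerging from completing the square does reduce mod $\fp$, up to units, to $G(\v)$ as defined in \eqref{eq:def-G}. Finitely many exceptional primes $\fp$ dividing $2a_1\cdots a_n b_1\cdots b_m$ can be handled by the trivial Corollary~\ref{cor:2} bound and absorbed into the implied constant.
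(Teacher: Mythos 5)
Your proposal is correct and follows essentially the same route as the paper: a CRT splitting of $\fo/\gfp\cong\fo/\fp\times\fo/\fp^{\tau^{-1}}$ (the paper uses elements $\mu,\lambda$ with prescribed valuations from Lemma \ref{lem:skinner-3} rather than idempotents, which is equivalent), the Galois-invariance of the trace to turn the conjugated block into quadratic Gauss sums over $\fo/\fp$, the count $p^{n-m}\cdot p^{(n+m)/2}$ from the linear and quadratic variables, and a final Kloosterman/Sali\'e sum in $a$ whose second frequency is the dual form $G(\v)$ up to units. Your extra case $\fp=\fp^{\tau^{-1}}$ is vacuous for unramified degree-one primes in a Galois extension (the decomposition group is trivial), but including it is harmless.
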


\begin{proof}
Let 
$\fp$ be a prime ideal of residue degree $1$,
so that $\n\fp=p$, for a rational prime $p$.
We may assume that $p$ is unramified in $K$ and that 
$$
\fp\nmid 2a_1\cdots a_n b_1\cdots b_m,
$$
since the desired estimate is trivial otherwise.
Since $K/\QQ$ is Galois, this means that there is a factorisation $(p)=\fp_1\cdots\fp_d$
into prime ideals, where $\fp_1,\dots,\fp_d$ are the $d$ conjugates of $\fp$ under $\Gal(K/\QQ)$, satisfying 
$\n\fp_i=p$ for $1\leq i\leq d$. 

It will be convenient to write 
$S_\fp=S_\mathfrak{p}  (N;\v)$ and 
$\tilde\fp=\fp^{\tau^{-1}}$ in the proof.
Then  $\fp$ and $\tilde\fp$ are distinct prime ideals, with 
$\gfp=\fp \tilde\fp$ and $\n \fp=\n\tilde{\fp}=p$.
Choose $\gamma=g/\alpha \in \mathfrak{F}(\fp)$ 
as in Lemma \ref{lem:orthogonal}, so that 
$\psi(\gamma \cdot)$ is a primitive character modulo $\fp$. Then we can write
\begin{align*}
S_{\fp}  
&=
\sum_{a\in (\fo/\fp)^*} 
\sum_{\x\bmod{\gfp}}\psi\left( \gamma a (F(\x)-N)+   \v. \x\right)\\
&=
\sum_{a\in (\fo/\fp)^*} \psi(-\gamma a N)
\sum_{\x\bmod{\gfp}}\psi\left( \gamma\left\{ a F(\x)+   \alpha \v. \x\right\}\right),
\end{align*}
as in \eqref{eq:SUM}.

Lemma  \ref{lem:skinner-3} yields
$$
S_{\fp}=
\sum_{a\in (\fo/\fp)^*} \psi(-\gamma a N)
\sum_{\u\in (\fo/\fp)^n} 
\sum_{\w\in (\fo/\tilde\fp)^n} 
\psi\left(\gamma\left\{ a F(\mu \u+\lambda \w ) +\alpha(\mu \u+\lambda \w ).\v\right\}\right),
$$
for suitable  $\lambda,\mu\in \fo$ such that  
$$
\ord_{\fp}(\mu)=\ord_{\tilde\fp}(\lambda)=0 \quad \text{ and }\quad
\ord_{\fp}(\lambda)=\ord_{\tilde\fp}(\mu)=1.
$$
Clearly 
$$
\psi\left(\gamma\alpha (\mu \u+\lambda \w ).\v\right)
=\psi\left(\gamma \alpha\mu \u.\v\right)\psi\left( \gamma \alpha\lambda \w.\v\right)
$$
and 
$$
\psi\left( \gamma a F(\mu	 \u+\lambda \w )\right)=
\psi\left( \gamma a \left\{
\mu^2\sum_{i=1}^n a_iu_i^2 +(\lambda^\tau)^2 \sum_{i=1}^m b_i (w_i^{\tau})^2\right\}\right),
$$
since the characters $\psi( \gamma\lambda \cdot)$ 
and 
$\psi( \gamma\mu^\tau \cdot)$ are both trivial on $\fo$.
Putting everything together, it  follows that 
\begin{equation}\label{eq:waypoint}
S_{\fp}=\Sigma_0\sum_{a\in (\fo/\fp)^*} \psi(-\gamma a N)\Sigma_1(a)\Sigma_2(a), 
\end{equation}
where
\begin{align*}
\Sigma_0
&=\prod_{i={m+1}}^n \sum_{w \in \fo/\tilde\fp}
\psi\left(\gamma \alpha \lambda w  v_i\right),\\
\Sigma_1(a)&=
\sum_{\u\in (\fo/\fp)^n} 
\psi\left( \gamma \left\{a 
\mu^2\sum_{i=1}^n a_iu_i^2 + \alpha\mu \u.\v\right\}\right),\\
\Sigma_2(a)&=
\sum_{\w\in (\fo/\tilde\fp)^m} 
\psi\left(  \gamma \left\{a 
(\lambda^\tau)^2 \sum_{i=1}^m b_i (w_i^{\tau})^2
+ \alpha\lambda \sum_{i=1}^m w_iv_i\right\}\right).
\end{align*}
We estimate the first sum trivially via $\Sigma_0\ll (\n\tilde\fp)^{n-m}=(\n\fp)^{n-m}$.

The second sum factorises as  
\begin{align*}
\Sigma_1(a)
 &=
\prod_{i=1}^n \sum_{u\in \fo/\fp} 
\psi\left( \gamma \left\{a  \mu^2
 a_iu^2 +    u(\alpha\mu v_i)\right\}\right),
\end{align*}
Recall from the definition of $\mathfrak{F}(\fp)$ that $\alpha\in \fp \fd$. Hence
$$
\alpha\mu v_i\in \fp \fd \cdot \tilde{\fp} \cdot \widehat\gfp = \fo,
$$
since $\ord_{\tilde \fp}(\mu)=1$ and 
$\v\in \widehat{\gfp}^n$, by assumption.
Making   the change of variables 
$$
u\to u-\bar{2a\mu^2 a_i} \alpha\mu v_i,
$$ 
where $\bar{2a\mu^2a_i}$ denotes the multiplicative inverse of 
$2a\mu^2 a_i$ modulo $\fp$, we are led to the expression
$$
\sum_{u\in \fo/\fp} 
\psi\left( \gamma \left\{a  \mu^2
 a_iu^2 +    u(\alpha\mu v_i)\right\}\right)
=\psi\left(- \gamma\bar{4a \mu^2a_i} (\mu \alpha v_i)^2\right)
\sum_{u\in \fo/\fp} 
\psi\left( \gamma a  \mu^2
 a_iu^2 \right),
$$
since $\bar{4}-\bar{2}\equiv -\bar{4} \bmod{\fp}$.
The inner sum is a classical Gauss sum, as found in work of Hecke \cite[Satz 155]{hecke}, for example. 
We obtain
$$
\sum_{u\in \fo/\fp} 
\psi\left( \gamma \left\{a  \mu^2
 a_iu^2 +    u(\alpha\mu v_i)\right\}\right)
 =
\left(\frac{aa_i}{\fp}\right)\tau_\fp 
\psi\left( -\gamma\bar{4a \mu^2a_i} (\mu \alpha v_i)^2\right),
$$
where 
$$
\tau_\fp
=
\sum_{u\in \fo/\fp} 
\psi\left(  \gamma
u^2 \right).
$$
This completes the proof of the identity
$$
\Sigma_1(a)= \left(\frac{a}{\fp}\right)^n \left(\frac{a_1\cdots a_n}{\fp}\right) \tau_\fp^n 
\psi\left(-\gamma  \bar{4a\mu^2}  \sum_{i=1}^n \bar{a_i} (\mu \alpha v_i)^2\right).
$$

It turns out that the remaining sum $\Sigma_2(a)$ can also be interpreted as a product of Gauss sums. 
First, we observe that we have the factorisation
\begin{align*}
\Sigma_2(a)
&=
\prod_{i=1}^m
\sum_{w\in \fo/\tilde\fp} 
\psi\left(  \gamma \left\{a 
(\lambda^\tau)^2 b_i (w^{\tau})^2
+ \alpha\lambda wv_i\right\}\right)\\
&=
\prod_{i=1}^m
\sum_{u\in \fo/\fp} 
\psi\left(  \gamma \left\{a 
(\lambda^\tau)^2 b_i u^2
+ \alpha\lambda u^{\tau^{-1}}v_i\right\}\right),
\end{align*}
on making the change of variables $u=w^\tau$.
The  trace is left invariant under conjugation. On recalling that $g\in  \ZZ$, so that $g^\tau=g$,  it therefore follows that 
$$
\psi\left(  \gamma  \alpha\lambda u^{\tau^{-1}}v_i\right)=
\psi\left(  \gamma^\tau  \alpha^\tau \lambda^\tau uv_i^\tau\right)=
\psi\left(  \gamma  \alpha \lambda^\tau uv_i^\tau\right),
$$
since $(\gamma\alpha)^\tau=g=\gamma\alpha$.
Hence 
\begin{align*}
\Sigma_2(a)
&=
\prod_{i=1}^m
\sum_{u\in \fo/\fp} 
\psi\left(  \gamma \left\{a 
(\lambda^\tau)^2 b_i u^2
+ u(\alpha \lambda^\tau v_i^\tau)\right\}\right),
\end{align*}
where
$$
\alpha \lambda^\tau v_i^\tau \in \fp\fd \cdot \fp^\tau \cdot (\widehat \gfp)^\tau  \in \fo,
$$
for $1\leq i\leq m$. The inner sum is a  Gauss sum that we can evaluate, as previously. This yields
$$
\Sigma_2(a)
=
 \left(\frac{a}{\fp}\right)^m \left(\frac{b_1\cdots b_m}{\fp}\right) \tau_\fp^m
\psi\left(-\gamma  \bar{4a(\lambda^\tau)^2}  \sum_{i=1}^m \bar{b_i} (\lambda^\tau \alpha v_i^\tau)^2\right).
$$

We now piece  everything together in \eqref{eq:waypoint}.
To begin with, it follows from   squaring and differencing that
$$
|\tau_\fp|^2= 
\sum_{u\in \fo/\fp} 
\psi\left(\gamma    
u^2 \right)
\sum_{v\in \fo/\fp} 
\psi\left( 2 \gamma  
uv \right).
$$
Since $\fp\nmid 2$, we see that the  inner sum is $\n\fp$ if $u\in \fp$ and 
and $0$ otherwise.
 Hence it follows that 
$
|\tau_\fp|=\sqrt{\n\fp},
$
from which we deduce that 
$$
 S_{\fp}
\ll ( \n \fp)^{(3n-m)/2} \left|
\sum_{a\in (\fo/\fp)^*}  \left(\frac{a}{\fp}\right)^{m+n} 
\psi\left(\gamma \left\{ -aN-\bar{4a} M\right\}
\right)
\right|,
$$
where
$$
M=
\bar{\mu^2}\sum_{i=1}^n \bar{a_i} (\mu \alpha v_i)^2
+
\bar{ (\lambda^\tau)^2}\sum_{i=1}^m \bar{b_i} (\lambda^\tau \alpha v_i^\tau)^2.
$$
Since $\fp\nmid 2a_1\cdots a_n b_1\cdots b_m \mu \lambda^\tau$, we 
may replace 
$\bar a$ by 
$4a a_1\cdots a_n b_1\cdots b_m \mu^2(\lambda^\tau)^2$ by $a$, in order to obtain  
$$S_{\fp}(\v)\ll ( \n \fp)^{(3n-m)/2}|K_\fp|,
$$ 
with
$$
K_\fp
=
\sum_{a\in (\fo/\fp)^*}  \left(\frac{a}{\fp}\right)^{m+n} 
\psi\left( \gamma \left\{-a \mu^2 (\lambda^\tau)^2 \alpha^2G( \v)
-\bar{4a a_1\cdots a_n b_1\cdots b_m \mu^2(\lambda^\tau)^2}N
\right\}\right),
$$
with  $G$ is given by \eqref{eq:def-G}. One notes that 
$\mu^2 (\lambda^\tau)^2 \alpha^2G( \v)\in \fo$ when $\v\in (\widehat \gfp)^n$.
In particular 
$$
\ord_\fp\left(\mu^2 (\lambda^\tau)^2 \alpha^2G( \v)\right)=\ord_\fp(G(\v))+2.
$$
Thus $K_\fp$
 is a Kloosterman sum, if $2\mid m+n$, and a Sali\'e sum if $2\nmid m+n$. It follows that 
 $$
 K_\fp
\ll 
\begin{cases}
\n\fp & \text{ if $\fp\mid N$ and $\ord_\fp(G(\v))+2>0$,}\\ 
\sqrt{\n\fp}  &\text{ otherwise}.
\end{cases}
 $$
The statement of the lemma is now clear.
\end{proof}

 We are now ready to reveal  our final estimate for 
the exponential sum $S_\mathfrak{b}  (N;\m)$.

\begin{lemma}\label{lem:b=p}
Let $\ve>0$. 
Let $\fb\subset \fo$ be a non-zero ideal and  let 
 $\m\in \widehat\gfb^n$. 
Then 
$$
S_\mathfrak{b}  (N;\m)\ll 
  (\n\fb)^{\frac{1}{2}-(n-m)/2+\ve}  
(\n \gfb)^{n-m/2} 
\prod_{
\substack{
\fp \mid (\fb,N)\\ 
\n\fp
 \| \n\fb\\
\ord_\fp(G(\m))\geq -1\\
}
}(\n\fp)^{\frac{1}{2}}
\hspace{0.2cm}
\prod_{
\substack{p^k\| \n \fb \\ k\geq 2}}
p^{\frac{k}{2}},
$$
where $G$ is given by 
\eqref{eq:def-G}.
\end{lemma}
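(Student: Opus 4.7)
The plan is to use multiplicativity to split $S_\fb(N;\m)$ into a product of local factors and then estimate each factor using either the refined Lemma \ref{lem:prime} or the general Corollary \ref{cor:2}. Since $K/\QQ$ is Galois, every prime ideal of $\fo$ over a rational prime $p$ has norm a power of $p$; writing $\fb=\prod_p\fb_p$ for the decomposition into components above each rational prime, one has $\n\fb_p=p^{k_p}$ with the $\n\fb_p$ pairwise coprime for distinct $p$. Iterating Lemma \ref{lem:mult} therefore produces
$$
|S_\fb(N;\m)| = \prod_{p\mid\n\fb}|S_{\fb_p}(N_p;c_p\m)|,
$$
where $N_p$ is $N$ times a unit modulo $\fb_p$ and $c_p\in\ZZ$ is coprime to $\n\fb_p$. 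Crucially, the arithmetic conditions defining $\theta_\fp$ in Lemma \ref{lem:prime} are insensitive to these twists: the unit does not affect divisibility by any $\fp\mid\fb_p$, while $G(c_p\m)=c_p^2G(\m)$ (immediate from \eqref{eq:def-G}) combined with $\gcd(c_p,\fp)=1$ forces $\ord_\fp(G(c_p\m))=\ord_\fp(G(\m))$.

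The next step is to partition the rational primes $p\mid\n\fb$ into two classes. If $\n\fb_p=p$, then $\fb_p=\fp$ for a single prime ideal of residue degree one; for such $\fp$ unramified and coprime to $2a_1\cdots a_nb_1\cdots b_m$, Lemma \ref{lem:prime} supplies $|S_{\fb_p}|\ll p^{\theta_\fp+(3n-m)/2}$. Degree-one primes in a Galois extension satisfy $\fp^{\tau^{-1}}\neq\fp$ for non-trivial $\tau$, whence $\gfp=\fp\fp^{\tau^{-1}}$ and $\n\gfp=p^2$; a short exponent computation then rewrites this bound as $(\n\fb_p)^{1/2-(n-m)/2}(\n\gfb_p)^{n-m/2}$ multiplied by an additional factor $p^{1/2}$ exactly when $\theta_\fp=1$, i.e.\ precisely when $\fp$ contributes to the first product in the target. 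In the remaining case $\n\fb_p=p^k$ with $k\geq 2$, Corollary \ref{cor:2} delivers
$$
|S_{\fb_p}|\ll(\n\fb_p)^{1-(n-m)/2}(\n\gfb_p)^{n-m/2}=(\n\fb_p)^{1/2-(n-m)/2}(\n\gfb_p)^{n-m/2}\cdot p^{k/2},
$$
producing the corresponding factor $p^{k/2}$ in the second product.

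For the finitely many $p$ at which Lemma \ref{lem:prime} is not directly applicable (ramified primes, and primes dividing $2a_1\cdots a_nb_1\cdots b_m$), one falls back on Corollary \ref{cor:2}; the resulting discrepancy with the target is a bounded factor depending only on $F$ and $K$, absorbed into $(\n\fb)^\ve$. Assembling the local estimates via $\n\fb=\prod_p\n\fb_p$ and $\n\gfb=\prod_p\n\gfb_p$ then reconstructs the stated global bound. The main obstacle, and essentially the only non-trivial bookkeeping, is to track the unit and scalar twists generated by the iterated multiplicativity and confirm the invariance of the arithmetic conditions ($\fp\mid N$ and $\ord_\fp(G(\m))\geq-1$) governing the first product; once that invariance is in place, the matching of exponents in each local case reduces to a routine calculation.
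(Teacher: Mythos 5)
Your proposal is correct and follows essentially the same route as the paper: the paper first splits $\fb=\fb_1\fb_2$ into the parts with square-free and square-full norm, bounds the square-full part by Corollary \ref{cor:2}, and then factors $\fb_1$ into degree-one primes via Lemma \ref{lem:mult} before applying Lemma \ref{lem:prime}, which is exactly your prime-by-prime decomposition in a slightly different order. The bookkeeping you flag (that the unit twists of $N$ and the integer scalings of $\m$ preserve the conditions $\fp\mid N$ and $\ord_\fp(G(\m))\geq-1$, and that the scaled $\m$ lands in $\widehat{\gfb_1}^n$) is carried out in the paper just as you describe.
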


\begin{proof}
There is a factorisation $\fb=\fb_1\fb_2$, where $\n\fb_1$ is square-free and 
$\n\fb_2$ is square-full, with $\gcd(\n\fb_1,\n\fb_2)=1$. 
It follows from 
Lemma \ref{lem:mult} and  Corollary~\ref{cor:2}
that 
\begin{equation}\label{eq:breath}
\begin{split}
S_\fb(N;\m)
&=
  S_{\fb_1}  (\bar{\n\fb_2}^2N;(\n\fb_2)\m)  S_{\fb_2}  (\bar{\n\fb_1}^2N;(\n\fb_1)\m)\\
&\ll 
|S_{\fb_1}  (\bar{\n\fb_2}^2N;(\n\fb_2)\m) |
  (\n\fb_2)^{1-(n-m)/2} 
(\n \gfb_2)^{n-m/2}.
\end{split}
\end{equation}
We now turn to $S_{\fb_1}  (\bar{\n\fb_2}^2N;(\n\fb_2)\m)$, in which we note that 
$$
(\n\fb_2)m_i\in (\n\fb_2) \widehat{\gfb}\in  \widehat{\gfb_1} (\n\fb_2) \gfb_2^{-1}\in \widehat{\gfb_1},
$$ 
for $1\leq i\leq n$.  Since   $\n \fb_1$ is square-free, we have a factorisation
$$
\fb_1=\fq_1\cdots\fq_r,
$$
for distinct prime ideals $\fq_1,\cdots,\fq_r$ of residue degree $1$ such that $\n\fq_1,\cdots,\n\fq_r$ are distinct rational primes.
Let 
$$
c_i=\prod_{\substack{j=1\\ j\neq i}}^r \n\fq_j,
$$
for $1\leq i\leq r$.
It now follows from a further application of 
Lemma \ref{lem:mult} that 
$$
S_{\fb_1}  (\bar{\n\fb_2}^2N;(\n\fb_2)\m) 
=S_{\fq_1}(\bar{\n\fb_2}^2\bar{c_1}^2N;(\n\fb_2)c_1\m)\cdots
S_{\fq_r}(\bar{\n\fb_2}^2\bar{c_r}^2N;(\n\fb_2)c_r\m).
$$
In particular, we plainly have 
$(\n\fb_2)c_i\m\in (\widehat{\gfq_i})^n$, for $1\leq i\leq r$. 

We are now aligned for an application of Lemma \ref{lem:prime}.
For each $i\in \{1,\dots,r\}$, this yields
$$
S_{\fq_i}(\bar{\n\fb_2}^2\bar{c_i}^2N;(\n\fb_2)c_i\m)
\ll
(\n\fq_i)^{\theta_{\fq_i}+(3n-m)/2},  
$$
where
$$
\theta_{\fq_i}=
\begin{cases}
1 & \text{ if $\fq_i\mid N$  and $\ord_{\fq_i}(G(\m))\geq -1$,}\\
\frac{1}{2} & \text{ otherwise.}
\end{cases}
$$
Note that 
$$
(\n\fq_i)^{\theta_{\fq_i}+(3n-m)/2}=
  (\n\fq_i)^{\theta_{\fq_i}-(n-m)/2}  
(\n \gfq_i)^{n-m/2},
$$
since $\n\gfq_i=(\n\fq_i)^2$. 
Thus
$$
S_{\fb_1}  (\bar{\n\fb_2}^2N;(\n\fb_2)\m) \ll
(\n\fb_1)^{\frac{1}{2}-(n-m)/2+\ve}  
(\n \gfb_1)^{n-m/2}
\prod_{
\substack{
\fp\mid (\fb_1,N)\\ \ord_\fp(G(\m))\geq -1}
}(\n\fp)^{\frac{1}{2}}.
$$
Combining these estimates in \eqref{eq:breath}, we conclude that 
\begin{align*}
S_\fb(N;\m)
&\ll 
  (\n\fb)^{\frac{1}{2}-(n-m)/2+\ve} 
(\n \gfb)^{n-m/2} (\n\fb_2)^{\frac{1}{2}}
\prod_{
\substack{
\fp\mid (\fb_1,N)\\ \ord_\fp(G(\m))\geq -1}
}(\n\fp)^{\frac{1}{2}},
\end{align*}
since 
$(\n \fb_1)(\n \fb_2)=\n\fb$ and
$(\n \gfb_1)(\n \gfb_2)=\n \gfb$. 
The statement of the lemma is now clear.
\end{proof}

Our next task is to analyse the oscillatory integral $K(u,P\m)$
when $F$ is given by 
\eqref{eq:special}, 
based on 
\eqref{eq:K(um)}. 
To the fixed automorphism  $\tau\in \Gal(K/\QQ)$ 
in \eqref{eq:special}, 
we can associated a unique 
integer $l_\tau\in \{1,\dots,d\}$, as in \eqref{eq:l_tau}.
We therefore have 
$$
F^{(l)}(\underline\x)=
\sum_{i=1}^n a_i^{(l)} (x_i^{(l)})^2
+ \sum_{i=1}^m b_i^{(l)} (x_i^{(l_{\tau^{-1}})})^2,
$$
for $1\leq l\leq d$. 
Let  $\mathbf{A}_l=\diag(a_1^{(l)},\dots, a_n^{(l)})$
and   $\mathbf{B}_l=\diag(b_1^{(l)},\dots,b_m^{(l)}, 0,\dots,0)$. 
Then it follows that the quadratic form \eqref{eq:real-Q}
has an  underlying matrix which is the block diagonal
matrix
$$
\begin{pmatrix}
u_1 \mathbf{A}_1+u_{1_\tau} \mathbf{B}_{1_\tau}  & \mathbf{0} & \cdots & \mathbf{0} \\
\mathbf{0} & u_2 \mathbf{A}_2+u_{2_\tau} \mathbf{B}_{2_\tau}  &  \cdots & \mathbf{0} \\
\vdots & \vdots &\ddots & \vdots\\
\mathbf{0} & \mathbf{0}& \cdots &
u_d \mathbf{A}_d+u_{d_{\tau}} \mathbf{B}_{d_\tau}
\end{pmatrix}.
$$
If  $\m$ is given coordinates 
$
\underline\m=(\m^{(1)},\dots,\m^{(d)})
$
on $V^n$, then we have 
\begin{align*}
K(u,P\m)
&=
\prod_{l=1}^d
\int_{ \RR^{n}}W(\x^{(l)})
e\left(G^{(l)}(\x^{(l)})-P\m^{(l)}.\x^{(l)}\right)\d\x^{(l)},
\end{align*}
where $G^{(l)}$ has underlying matrix  $u_l \mathbf{A}_l +u_{l_\tau}\mathbf{B}_l$.
Since this matrix is diagonal, 
on assuming that the weight $W$ is chosen suitably, 
 we may further factorise to obtain
$$
K(u,P\m)
=
\prod_{l=1}^d 
H_1^{(l)}\cdots H_m^{(l)} I_{m+1}^{(l)}\cdots I_{n}^{(l)},
$$
where we write
$$
H_i^{(l)}=
\int_{ \RR}W(x)
e\left( (a_i^{(l)}u_l +b_i^{(l_\tau)}u_{l_\tau})x^2
 -Pm_i^{(l)}x\right)\d x 
 $$
 for $i\leq m$,
 and 
 $$
 I_i^{(l)}=
\int_{ \RR}W(x)
e\left( a_i^{(l)}u_l x^2
 -Pm_i^{(l)}x\right)\d x 
$$
for $i>m$.

\begin{lemma}\label{lem:tornado}
Let 
$$
L_i(u)=a_iu+\tau^{-1}(b_iu),
$$
for $1\leq i\leq m$. 
Then, for any $\ve>0$,  $K(u,P\m)$ is essentially supported on the set of $u$ and $\m$ for which 
\begin{equation}\label{eq:mot}
|m_i^{(l)}| \ll 
\begin{cases} 
P^{-1+\ve} |\rho_l(L_i(u))| &\text{ if $i\leq m$,}\\
P^{-1+\ve} |u_l| &\text{ if $i> m$,}
\end{cases}
\end{equation}
for $1\leq l\leq d$. Moreover, we have 
$$
K(u,P\m)\ll   \frac{1}{\sqrt{\mathfrak{H}(L_1(u))\cdots 
 \mathfrak{H}(L_m(u))
 \mathfrak{H}(u)^{n-m}}}.
$$
\end{lemma}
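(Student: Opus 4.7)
The plan is to apply the one-dimensional case of Lemma~\ref{lem:HBP} to each of the $dn$ factors in the factorisation
$$
K(u,P\m) = \prod_{l=1}^d H_1^{(l)}\cdots H_m^{(l)} I_{m+1}^{(l)}\cdots I_{n}^{(l)}
$$
that precedes the lemma statement, and then to assemble the resulting bounds using the height function $\mathfrak{H}$.

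The key preliminary identification is that the quadratic coefficient $a_i^{(l)}u_l + b_i^{(l_\tau)}u_{l_\tau}$ appearing in $H_i^{(l)}$ is exactly $\rho_l(L_i(u))$. Indeed, since $\rho_{l_\tau}\tau=\rho_l$ by \eqref{eq:l_tau}, we have $\rho_l\circ\tau^{-1}=\rho_{l_\tau}$, so the $l$-th real coordinate of $\tau^{-1}(b_i u)\in V$ is $b_i^{(l_\tau)}u_{l_\tau}$, and thus
$$
\rho_l(L_i(u)) = a_i^{(l)}u_l + b_i^{(l_\tau)}u_{l_\tau}.
$$
With this identification, $H_i^{(l)}$ is a one-variable oscillatory integral with quadratic part having single eigenvalue $\rho_l(L_i(u))$ (and norm $\|\mathcal{Q}\|\asymp|\rho_l(L_i(u))|$), while $I_i^{(l)}$ has single eigenvalue $a_i^{(l)}u_l\asymp u_l$.

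The first part of Lemma~\ref{lem:HBP} (applied with $m=1$) gives
$$
|H_i^{(l)}|\ll \min\bigl\{1,|\rho_l(L_i(u))|^{-1/2}\bigr\}, \qquad |I_i^{(l)}|\ll \min\bigl\{1,|u_l|^{-1/2}\bigr\}.
$$
Multiplying these bounds together across $1\leq l\leq d$ and $1\leq i\leq n$, and using the elementary identity
$$
\prod_{l=1}^d \min\bigl\{1,|v_l|^{-1/2}\bigr\} = \prod_{l=1}^d \max\{1,|v_l|\}^{-1/2} = \mathfrak{H}(v)^{-1/2}
$$
applied with $v=L_i(u)$ (for $i\leq m$) and $v=u$ (for $i>m$), we obtain the asserted upper bound
$$
K(u,P\m)\ll \bigl(\mathfrak{H}(L_1(u))\cdots\mathfrak{H}(L_m(u))\mathfrak{H}(u)^{n-m}\bigr)^{-1/2}.
$$

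For essential support, we invoke the second assertion of Lemma~\ref{lem:HBP}: the integral is $O_{A}(|Pm_i^{(l)}|^{-A})$ once $|Pm_i^{(l)}|\geq 4\|\cQ\|$, where $\|\cQ\|$ is the relevant quadratic coefficient. For $H_i^{(l)}$ this says the integral decays rapidly unless $|Pm_i^{(l)}|\ll |\rho_l(L_i(u))|$, while for $I_i^{(l)}$ it says it decays rapidly unless $|Pm_i^{(l)}|\ll|u_l|$. Absorbing a $P^{\ve}$ margin (so that the contribution outside this range is at most $P^{-A}$ for any $A\geq 1$, hence negligible), we recover precisely the support conditions \eqref{eq:mot}. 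There is no real obstacle here since the diagonal structure of $F$ in \eqref{eq:special} reduces everything to one-variable integrals; the only point needing care is the identification of $\rho_l(L_i(u))$ with the coefficient appearing in $H_i^{(l)}$, which rests on the definition of $l_\tau$ in \eqref{eq:l_tau}.
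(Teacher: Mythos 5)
Your proposal is correct and follows essentially the same route as the paper: identify $\rho_l(L_i(u))=a_i^{(l)}u_l+b_i^{(l_\tau)}u_{l_\tau}$ via \eqref{eq:l_tau}, apply the one-dimensional second-derivative (Lemma \ref{lem:HBP}) bound to each factor $H_i^{(l)}$, $I_i^{(l)}$, and assemble the products into $\mathfrak{H}(L_i(u))^{-1/2}$ and $\mathfrak{H}(u)^{-1/2}$, with the support condition coming from the non-stationary-phase part of the same lemma. The paper's proof is, if anything, terser on the "essential support" step than yours.
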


\begin{proof}
Clearly we get exponential decay in $K(u,P\m)$ unless $P|\m|\ll |u| P^\ve$, as we now assume. 
However, on examining each of the factors in $K(u,P\m)$ separately, the essential support of 
$K(u,P\m)$ is rendered clear. Next, for each $i\leq m$ and  $1\leq l\leq d$, 
we have 
$
\rho_l(L_i(u))=a_i^{(l)}u_l+b_i^{(l_\tau)}u_{l_\tau}.
$
The  second derivative bound for exponential integrals yields
$$
H_i^{(l)}\ll \min\left(1,|\rho_l(L_i(u))|^{-1/2}\right),
$$
for $i\leq m$, and 
$$
I_i^{(l)}\ll \min\left(1,|u_l|^{-1/2}\right) ,
$$
for $i> m$. The statement is now clear. 
\end{proof}

We now piece everything together in  
our expression 
\eqref{eq:ENPXY} for 
$E(N;P;X,Y)$.
We shall continue to adhere to the convention that the value of $\ve>0$ is allowed to change at each appearance, and that all
implied constants are allowed to depend on $\ve$.

Recall
the definition of  $\mathcal{U}$ from  \eqref{eq:breeze}.
Combining \eqref{eq:ENPXY} and Lemma 
\ref{lem:tornado}, we obtain
$$
E(N;P;X,Y)\ll_A P^\ve Y^{-n}
\sum_{\substack{\fb\in \mathcal{B}(X,Y)}}
\sum_{\substack{0\neq \m\in \widehat{\gfb  }^n}}
|S_\mathfrak{b}  (N;\m)|
\int_{\mathcal{R}(\m)} 
f(u)\d u +P^{-A},
$$
where now
\begin{equation}\label{eq:def-f}
f(u)=\frac{1}
{\sqrt{\mathfrak{H}(L_1(u))\cdots 
 \mathfrak{H}(L_m(u))
 \mathfrak{H}(u)^{n-m}}}
\end{equation}
and 
$\mathcal{R}(\m)$ denotes  the set of 
$u\in \mathcal{U}$ such that  \eqref{eq:mot} holds.

We now make the exact same change of variables 
$\c=\delta \m$ that we made previously in \eqref{eq:change-var}.
Then, in particular,  we can assume that 
\eqref{eq:squall} holds.
Moreover, on dropping the information about $G(\m)$,  Lemma \ref{lem:b=p} yields
\begin{align*}
S_\mathfrak{b}  (N;\delta^{-1}\c)
&\ll 
  (\n\fb)^{\frac{1}{2}-(n-m)/2+\ve}  
(\n \gfb)^{n-m/2} \sqrt{g(\fb)}\\
&\ll 
X^{\frac{1}{2}-(n-m)/2+\ve}
Y^{n-m/2} \sqrt{g(\fb)},
\end{align*}
where 
$$
g(\fb)=
\prod_{
\substack{
\fp \mid (\fb,N)\\ 
\n\fp
 \| \n\fb}}\n\fp
\hspace{0.2cm}
\prod_{
\substack{p^k\| \n \fb \\ k\geq 2}}
p^{k}.
$$
In this notation we conclude that 
\begin{align*}
E(N;P;X,Y)
&\ll_A
  \frac{X^{\frac{1}{2}-(n-m)/2}P^\ve }{
Y^{m/2}} 
\sum_{\fb\in \mathcal{B}(X,Y)}
\sum_{\substack{0\neq \c\in \fo^n\\ \text{\eqref{eq:squall} holds}}}
\sqrt{g(\fb)}
\int_{\mathcal{R}(\delta^{-1}\c)} 
f(u)\d u +P^{-A}.
\end{align*}
Let 
$$
L(u)=
\sum_{\fb\in \mathcal{B}(X,Y)} 
\sum_{
\c\in \mathcal{C}(u,\fb)}
\sqrt{g(\fb)},
$$
where 
$\mathcal{C}(u,\fb)$ is the set of non-zero vectors $\c\in \fo^n$ for which \eqref{eq:squall} holds and
$$
|c_i^{(l)}| \ll 
\begin{cases} 
P^{-1+\ve} Y^{1/d} |\rho_l(L_i(u))| &\text{ if $i\leq m$,}\\
P^{-1+\ve}  Y^{1/d} |u_l| &\text{ if $i> m$,}
\end{cases}
$$
for $1\leq l\leq d$. Then  we may write 
\begin{equation}\label{eq:waypoint9}
E(N;P;X,Y)
\ll 
  \frac{X^{\frac{1}{2}-(n-m)/2}P^\ve }{
Y^{m/2}} 
\int_{\mathcal{U}}
f(u) L(u)
 \d u,
\end{equation}
rendering our next task to estimate $L(u)$. The following result is an analogue of Lemma~\ref{lem:wish}.

\begin{lemma}\label{lem:wish2}
Let $u\in V$ be such that $\mathfrak{H}(u)\leq P^{d+\ve}/X$.
If $L(u)\neq 0$ then 
\begin{equation}\label{eq:this}
\frac{P^{d-\ve}}{X}\ll 
\mathfrak{H}(u)\ll \frac{P^{d+\ve}}{X}
\quad \text{ or }\quad 
P^{-d+\ve} Y \max_{1\leq i\leq m} 
\mathfrak{H}(L_i(u))\gg 1.
\end{equation}
Moreover, we have 
$
L(u)\ll P^\ve X J(u),
$
where
$$
 J(u)=
\prod_{i=1}^{m}
\max\left\{1,   
P^{-d} Y  \mathfrak{H}(L_i(u))\right\}.
$$
\end{lemma}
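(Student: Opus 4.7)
The proof closely parallels that of Lemma \ref{lem:wish}, but is substantially simpler because the diagonal form \eqref{eq:special} makes the relevant "eigenvalues" factor explicitly: for $i \leq m$ they are $\rho_l(L_i(u)) = a_i^{(l)}u_l + b_i^{(l_\tau)}u_{l_\tau}$, and for $i > m$ they are essentially $a_i^{(l)} u_l \asymp |u_l|$. In particular, the bounds defining $\mathcal{C}(u,\fb)$ decouple coordinate by coordinate, so there is no need for the auxiliary lattice $\Gamma$ with its successive-minima analysis that featured in Lemma \ref{lem:wish}.

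The plan is to split $\c = (\c',\c'')$ with $\c' = (c_1,\ldots,c_m) \in \fo^m$ and $\c'' = (c_{m+1},\ldots,c_n) \in (\fp_1\fb^{-1}\gfb)^{n-m}$, and to count each part separately. For $\c''$, I would mimic the opening of the proof of Lemma \ref{lem:wish}: choose $\lambda \in K$ with $(\lambda)=\fb^{-1}\gfb\fp_2^{-1}$ for an auxiliary prime $\fp_2$ of norm $O(P^\ve)$, well-shaped so that $|\lambda^{(l)}| \asymp (XY)^{1/d}$, and set $\c'' = \lambda \mathbf{d}''$. The constraints become $|d_i^{(l)}| \ll P^{-1+\ve}X^{1/d}|u_l|$, and Lang's lattice-point theorem \cite[Thm.~0, \S V.1]{lang} bounds the count by $(1+P^{-d+\ve}X\mathfrak{H}(u))^{n-m} \ll P^\ve$ using the hypothesis $\mathfrak{H}(u)\le P^{d+\ve}/X$. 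The dichotomy \eqref{eq:this} then falls out by examining a nonzero vector $\c$ counted by $L(u)$: if $\c''\ne \0$ some $d_i\ne 0$, so $1\le |N_{K/\QQ}(d_i)|\ll P^{-d+\ve}X\mathfrak{H}(u)$, yielding the first alternative; otherwise $\c'\ne \0$ and some $c_i\ne 0$ with $i\le m$ gives $1\le |N_{K/\QQ}(c_i)|\ll P^{-d+\ve}Y\prod_l|\rho_l(L_i(u))|\le P^{-d+\ve}Y\mathfrak{H}(L_i(u))$, yielding the second.

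For the main bound on $L(u)$, for each fixed $\fb$ and fixed choice of $\c''$, the count of $\c'\in\fo^m$ factors over the coordinates: each $c_i$ with $1\le i\le m$ contributes at most
\[
1+\prod_{l=1}^d P^{-1+\ve}Y^{1/d}|\rho_l(L_i(u))| \;\ll\; P^\ve \max\bigl\{1,\,P^{-d}Y\,\mathfrak{H}(L_i(u))\bigr\},
\]
again by Lang's theorem, and multiplying over $i=1,\ldots,m$ produces $O(P^\ve J(u))$. It remains to sum over $\fb\in\mathcal{B}(X,Y)$ against the weight $\sqrt{g(\fb)}$. Here the first product in $g(\fb)$ is $O(1)$ since it only involves the fixed finite set of prime divisors of $N$, while the square-full contribution $\prod_{p^k\|\n\fb,\,k\ge 2}p^k$ is handled by the standard estimate that square-full integers up to $X$ have density $O(X^{1/2})$, yielding $\sum_{\fb\in\mathcal{B}(X,Y)}\sqrt{g(\fb)}\ll X^{1+\ve}$. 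Combining, $L(u)\ll P^\ve X J(u)$. The only mild subtlety is tracking the auxiliary primes $\fp_1,\fp_2$ (which at worst inflate constants by $P^\ve$) and correctly accounting for the weight $\sqrt{g(\fb)}$ in the final sum over $\fb$; neither presents a genuine obstacle.
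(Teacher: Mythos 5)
Your proposal is correct and follows essentially the same route as the paper's proof: the same splitting $\c=(\c',\c'')$, the same change of variables $\c''=\lambda\mathbf{d}''$ with an auxiliary prime of norm $O(P^\ve)$, the same dichotomy obtained from the norm of a non-zero coordinate, the same coordinate-wise application of Lang's theorem, and an equivalent (merely reorganised) treatment of the sum of $\sqrt{g(\fb)}$ over $\fb\in\mathcal{B}(X,Y)$. The only slip is the claim $|\lambda^{(l)}|\asymp (XY)^{1/d}$, which should read $(Y/X)^{1/d}$ since $\n((\lambda))\asymp Y/X$; your subsequent constraint $|d_i^{(l)}|\ll P^{-1+\ve}X^{1/d}|u_l|$ is nevertheless the correct one, so nothing downstream is affected.
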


\begin{proof}
Let  us write $\fb=\fb_1\fb_2$, where $\n\fb_1$ is square-free and 
$\n\fb_2$ is square-full, with $\gcd(\n\fb_1,\n\fb_2)=1$. 
Then 
$
g(\fb)=
\n\fb_2 \n\mathfrak{h},
$
where 
$\mathfrak{h}$ is the greatest common ideal divisor of $\fb_1$ and $N$.
In summary, we may now write
\begin{equation}\label{eq:Lu}
L(u)\leq 
\sum_{\substack{\n\fb_2\ll X\\ \text{$\n\fb_2$ square-full}}}\sqrt{\n\fb_2}
\sum_{\substack{\n\fb_1\ll X/(\n \fb_2)\\ 
\n\gfb_1 \ll Y/\n\gfb_2\\
\gcd(\n \fb_1,\n \fb_2)=1}} \mu^2(\n\fb_1) 
\sum_{\mathfrak{h}\mid (\fb_1,N)} \sqrt{\n\mathfrak{h}}
\# \mathcal{C}(u,\fb_1\fb_2).
\end{equation}

In order to proceed, we assume without loss of generality that $u\in V$ satisfies
$$
\mathfrak{H}(L_1(u))\leq\cdots\leq \mathfrak{H}(L_m(u)).
$$
Let us write $\c=(\c',\c'')$, where $\c'=(c_1,\dots,c_m)$ and $\c''=(c_{m+1},\dots,c_n)$. 
Keeping in mind  \eqref{eq:squall}, we first fix  a choice of $\c''\in \left((\fb_1\fb_2)^{-1} \gfb_1 \gfb_2\right)^{n-m}$ satisfying
$$
|c_i^{(l)}| \ll 
P^{-1+\ve}  Y^{1/d} |u_l| ,
$$
for $m+1\leq i\leq n$ and $1\leq l\leq d$.  We claim that 
 there exists
 $\lambda\in K$ such that 
$$(\lambda)=(\fb_1\fb_2)^{-1} \gfb_1 \gfb_2  \fp_2^{-1},
$$ 
for a suitable prime ideal $\fp_2$ of norm $O(P^\ve)$. 
To begin with, it follows from part (ii) of Lemma \ref{lem:alg1} that
 there exists
 $\lambda_3\in \fo$ such that 
$(\lambda_3)=(\fb_1\fb_2)^{-1} \gfb_1 \gfb_2  \fp_3$, for a suitable prime ideal $\fp_3$ of norm $O(P^\ve)$. 
A second application of this result reveals that there exists $\lambda_2\in \fo$ and a prime ideal
$\fp_2$ of norm $O(P^\ve)$, such that $(\lambda_2)=\fp_3\fp_2$. The claim now follows with 
$\lambda=\lambda_3/\lambda_2$.

On multiplying by units we can further assume that 
$$
X^{-1/d}Y^{1/d}\ll |\lambda^{(l)}|\ll X^{-1/d}Y^{1/d+\ve},
$$
for $1\leq l\leq d$, on
recalling that  $\n\fb_1\n\fb_2 \asymp X$ and
 $\n\gfb_1\n\gfb_2 \asymp Y$.
Making the change of variables $\c''=\lambda \mathbf{d}''$,
we deduce that 
for $m+1\leq i\leq n$, we have 
$d_i\in \fo$ and
$$
|d_i^{(l)}| \ll 
P^{-1+\ve}  X^{1/d} |u_l| ,
$$
for  $1\leq l\leq d$.
In particular, 
if $\c''\neq \0$ then there exists $i\in \{m+1,\dots,n\}$ such that 
$$
1\leq |N_{K/\QQ}(d_i)| \ll 
P^{-d+\ve}  X |\nm (u)| .
$$
Recalling that 
 $\nm(u)\leq \mathfrak{H}(u)\ll P^{d+\ve}/X$, we deduce that 
\begin{equation}\label{eq:neq0-1}
\c''\neq \0 \Longrightarrow   
\frac{P^{d-\ve}}{X}\ll 
\mathfrak{H}(u)\ll \frac{P^{d+\ve}}{X}.
\end{equation}
Moreover, arguing as in Lemma \ref{lem:wish}, it readily  follows from a result in Lang
 \cite[Thm.~0 in \S V.1]{lang} that  
 the overall  number of  vectors $\mathbf{d}''$ is $O(P^\ve)$.
We must next address the number of $\c'\in \fo^m$, with 
$(\c',\c'')\neq \0$, which satisfy
$$
|c_i^{(l)}| \ll 
P^{-1+\ve} Y^{1/d} |\rho_l(L_i(u))|,
$$
for $1\leq i\leq m$ and $1\leq l\leq d$.
It is clear that 
\begin{equation}\label{eq:neq0-2}
\c'\neq \0 \Longrightarrow   
1\ll P^{-d+\ve} Y 
\mathfrak{H}(L_m(u)).
\end{equation}
Together,  \eqref{eq:neq0-1} and \eqref{eq:neq0-2} yield the first part of the lemma. 

Appealing once more to 
Lang
 \cite[Thm.~0 in \S V.1]{lang}, 
we deduce that 
  the number of $\c'$ is
  \begin{align*}
&\ll  \prod_{i=1}^{m}\left(1+  \prod_{l=1}^d
P^{-1+\ve} Y^{1/d} |\rho_l(L_i(u))| 
\right)
\ll  J(u),
\end{align*}
in the notation of the lemma. 
Returning to 
\eqref{eq:Lu}, 
we deduce that 
\begin{align*}
L(u)
&\ll P^\ve J(u)
\sum_{\substack{\n\fb_2\ll X\\ \text{$\n\fb_2$ square-full}}}\sqrt{\n\fb_2}
\sum_{\mathfrak{h}\mid N} \sqrt{\n\mathfrak{h}}
\sum_{\substack{\n\fb_1\ll X/(\n \fb_2)\\ \mathfrak{h}\mid \fb_1}} 1\\
&\ll P^\ve J(u)
\sum_{\substack{\n\fb_2\ll X\\ \text{$\n\fb_2$ square-full}}}\sqrt{\n\fb_2}
\sum_{\mathfrak{h}\mid N} \sqrt{\n\mathfrak{h}} \cdot \frac{X}{(\n \fb_2) (\n \mathfrak{h})}\\
&\ll P^\ve X J(u)
\sum_{\substack{\n\fb_2\ll X\\ \text{$\n\fb_2$ square-full}}}\frac{1}{\sqrt{\n\fb_2}},
\end{align*}
since there are $O(1)$ ideal divisors $\mathfrak{h}\mid N$ when $N\in \fo$ is non-zero.
Finally, the lemma follows on noting that  there are $O(\sqrt{X})$ integral ideals such that $\n\fb_2$ is a square-full integer of modulus at most $X$. 
\end{proof}

We may now
apply Lemma \ref{lem:wish2} in \eqref{eq:waypoint9}. 
Let 
$R$ denote the set of $u\in \mathcal{U}$ such that 
\eqref{eq:this} holds.
On recalling the definition \eqref{eq:def-f} of $f(u)$, we deduce that 
\begin{equation}\label{eq:resin}
E(N;P;X,Y)
\ll_A  P^{-A}+
  \frac{X^{\frac{3}{2}-(n-m)/2}P^\ve }{
Y^{m/2}} I(X,Y),
\end{equation}
where
\begin{align*}
I(X,Y)&=
\int_{R}
\frac{\prod_{i=1}^{m}
\max\left\{1,   
P^{-d} Y  \mathfrak{H}(L_i(u))\right\}}{
{\sqrt{\mathfrak{H}(L_1(u))\cdots 
 \mathfrak{H}(L_m(u))
 \mathfrak{H}(u)^{n-m}}}}
 \d u.
\end{align*}
The following result deals  with  this integral.

\begin{lemma}\label{lem:I-theta}
We have 
\begin{align*}
I(X,Y)\ll ~&
P^\ve (P^{-d} Y)^{m}
\left( \left(\frac{P^{d}}{X}\right)^{3m/2-n/2+1}+1\right)\\
&\quad +
c_YP^\ve \left( 
\frac{P^{d(m/2-n/2+1)}Y^{m-1}}{X^{3m/2-n/2}}+
 \frac{Y}{P^d}\right)+P^\ve \left(\frac{P^d}{X}\right)^{1-(n-m)/2}
 \hspace{-1cm},
 \end{align*}
where
$$
c_Y=\begin{cases}
1 &\text{ if $Y\leq P^{d}$,}\\
0 & \text{ otherwise}.
\end{cases}
$$
\end{lemma}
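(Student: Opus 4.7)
The plan is to reduce $I(X,Y)$ to a sum of elementary integrals indexed by subsets $S \subseteq \{1,\dots,m\}$ and then to evaluate each with the height bounds \eqref{eq:BoundB}--\eqref{eq:BoundC}. Starting from $\max\{1,x\} \leq 1+x$ applied to each factor $\max\{1, P^{-d}Y\,\mathfrak{H}(L_i(u))\}$, expanding the product over $i$, and combining with the factor $f(u)$ from \eqref{eq:def-f} yields
$$
I(X,Y) \leq \sum_{S\subseteq \{1,\dots,m\}} (P^{-d}Y)^{|S|} \int_R \frac{\prod_{i\in S}\mathfrak{H}(L_i(u))^{1/2}}{\prod_{i\notin S}\mathfrak{H}(L_i(u))^{1/2}\,\mathfrak{H}(u)^{(n-m)/2}}\,du.
$$

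The key pointwise estimate is $\mathfrak{H}(L_i(u))\ll \mathfrak{H}(u)^2$. To see it, note that $|\rho_l(L_i(u))| \leq |a_i^{(l)}||u_l| + |b_i^{(l_\tau)}||u_{l_\tau}|$ gives $\max(1,|\rho_l(L_i(u))|) \ll \max(1,|u_l|)\max(1,|u_{l_\tau}|)$, and taking products over $l$ and invoking \eqref{eq:carpet} yields the claim. Combined with the trivial $\mathfrak{H}(L_i)^{-1/2}\leq 1$ for $i\notin S$, this bounds the integrand of each summand by $\mathfrak{H}(u)^{|S|-(n-m)/2}$.

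For $|S|=0$, condition \eqref{eq:this} forces $\mathfrak{H}(u) \asymp P^d/X$, since its second alternative fails when every $\mathfrak{H}(L_i)\leq P^d/Y$; using the volume bound $\int_{\mathfrak{H}(u)\leq T}du \ll T^{1+\varepsilon}$ from \eqref{eq:BoundB} then produces exactly the summand $(P^d/X)^{1-(n-m)/2+\varepsilon}$ of the stated bound. For $|S|=k\geq 1$, integrating $\mathfrak{H}(u)^{k-(n-m)/2}$ over $\mathfrak{H}(u)\leq P^{d+\varepsilon}/X$ gives, via \eqref{eq:BoundB} when $k-(n-m)/2\geq -1$ and \eqref{eq:BoundC} otherwise, a contribution $\ll (P^{-d}Y)^k\bigl((P^d/X)^{k-(n-m)/2+1+\varepsilon}+1\bigr)$.

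It remains to sum over $k\in\{1,\dots,m\}$. The non-trivial parts $(P^{-d}Y)^k(P^d/X)^{k-(n-m)/2+1}$ have consecutive ratio $Y/X\geq 1$ by \eqref{eq:XY}, so the sum is $\ll P^\varepsilon$ times its $k=m$ term, which is the first summand $(P^{-d}Y)^m(P^d/X)^{3m/2-n/2+1}$. The trivial parts $(P^{-d}Y)^k$ have ratio $P^{-d}Y$, so they collapse to $P^\varepsilon (P^{-d}Y)^m$ when $Y\geq P^d$ and to $P^\varepsilon Y/P^d$ when $Y<P^d$, producing the second summand and the $c_Y Y/P^d$ summand. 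The remaining $c_Y$-weighted summand $P^{d(m/2-n/2+1)}Y^{m-1}/X^{3m/2-n/2}$ equals the $k=m-1$ value of the non-trivial bound; its ratio to the first summand is $X/Y\leq 1$ by \eqref{eq:XY}, so it is dominated by the first summand and appears in the statement only for bookkeeping in the subsequent argument. The main obstacle is the careful separation of the $|S|=0$ case, where the first alternative in \eqref{eq:this} is essential, from the cases $|S|\geq 1$, where the looser constraint $\mathfrak{H}(u)\leq P^{d+\varepsilon}/X$ inherited from $\mathcal{U}$ suffices.
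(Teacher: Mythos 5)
Your expansion of $\prod_{i}\max\{1,P^{-d}Y\mathfrak{H}(L_i(u))\}\leq\prod_i(1+P^{-d}Y\mathfrak{H}(L_i(u)))$ over subsets $S$ is a legitimate variant of what the paper does (the paper instead partitions the \emph{domain} into regions $R_t$ according to which of the ordered heights $\mathfrak{H}(L_i(u))$ exceed $P^{d-\ve}/Y$), and your treatment of the terms with $|S|\geq 1$, together with the geometric-series bookkeeping identifying the second summand as the $k=m-1$ value, is correct. But there is a genuine gap at $|S|=0$. In the algebraic expansion the constant term $1$ is present at \emph{every} point of $R$, so the $|S|=0$ integral ranges over all of $R$, not merely over the sub-region where every $\mathfrak{H}(L_i(u))\ll P^{d-\ve}/Y$. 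Your assertion that ``condition \eqref{eq:this} forces $\mathfrak{H}(u)\asymp P^d/X$'' therefore does not apply: $R$ contains points where only the second alternative of \eqref{eq:this} holds, for which one knows merely $\mathfrak{H}(u)\gg (P^{d-\ve}/Y)^{1/2}$ (via $\mathfrak{H}(L_i(u))\ll\mathfrak{H}(u)^2$). Having already discarded the factors $\mathfrak{H}(L_i(u))^{-1/2}$, your bound for the $|S|=0$ term on that sub-region is $\int_{\mathfrak{H}(u)\gg P^{d/2-\ve}/Y^{1/2}}\mathfrak{H}(u)^{-(n-m)/2}\,\d u\ll (P^{d/2}/Y^{1/2})^{1-(n-m)/2+\ve}$ by \eqref{eq:BoundA}, and this can strictly exceed every term of the lemma: for instance with $n=m+4$ and $X=Y=P^{d/2}$ all terms on the right-hand side are $\asymp P^{-d/2+\ve}$, whereas $(P^{d/2}/Y^{1/2})^{-1}=P^{-d/4}$ (and one can check the loss is not merely in the estimate: the measure of the relevant part of $R$ really does make $\int_{R}\mathfrak{H}(u)^{-(n-m)/2}\,\d u$ of order $P^{-d/4}$ there).

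The repair is to keep at least one of the discarded denominator factors on that sub-region: where the second alternative of \eqref{eq:this} holds one has $\prod_{i\notin S}\mathfrak{H}(L_i(u))^{-1/2}\leq(\max_i\mathfrak{H}(L_i(u)))^{-1/2}\ll(P^{-d+\ve}Y)^{1/2}$, and combining this with the lower bound on $\mathfrak{H}(u)$ recovers a contribution $\ll P^\ve\min\{1,(Y/P^d)^{1/2}\}\,(P^{d/2}/Y^{1/2})^{1-(n-m)/2}=P^\ve(Y/P^d)^{(n-m)/4}$ when $Y\leq P^d$, which is $\leq Y/P^d$ once $n-m\geq 4$, and is absorbed by $(P^{-d}Y)^m$ when $Y>P^d$. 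Doing this is tantamount to reinstating the paper's decomposition of $R$ by the size of the $\mathfrak{H}(L_i(u))$ relative to $P^{d-\ve}/Y$, whose whole purpose is that the ``all maxima equal to $1$'' configuration only occurs where the first alternative of \eqref{eq:this} is forced.
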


\begin{proof}
In the proof of this result we shall make  frequent use of the observation that 
\begin{align*}
\mathfrak{H}(L_i(u))
&=\prod_{l=1}^d \max\{1,|a_i^{(l)}u_l+b_i^{(l_\tau)}u_{l_\tau}|\}\ll \mathfrak{H}(u)^2,
\end{align*}
for any $i\in \{1,\dots,n\}$, which follows from \eqref{eq:carpet}.

We may assume without loss of generality  that  the range of integration is restricted to satisfy
\begin{equation}\label{eq:moose}
\mathfrak{H}(L_1(u))\leq \cdots\leq \mathfrak{H}(L_m(u)).
\end{equation}
We further break the range of integration into $m+1$ regions.
For $0\leq t\leq m$, let  $R_t$ denote the set of 
$u\in V$ with 
$\mathfrak{H}(u)\leq P^{d+\ve}/X$,
such that \eqref{eq:this} 
 and \eqref{eq:moose} hold, with 
$$
\mathfrak{H}(L_t(u))\ll P^{d-\ve}/Y,\quad 
\mathfrak{H}(L_{t+1}(u))\gg P^{d-\ve}/Y.
$$
(Note that the left inequality is vacuous when $t=0$, and similarly for the right hand inequality when $t=m$.) In particular, it is clear that 
$
R_m=\emptyset
$
when the second inequality in \eqref{eq:this} holds.
Moreover, 
when   $t\in \{1,\dots,m-1\}$, 
we observe that 
$$
R_t\neq  \emptyset \Longrightarrow
 Y\ll P^{d-\ve},
$$
since $\mathfrak{H}(L_t(u))\geq 1$.
We have 
\begin{align*}
I(X,Y)
&\ll \sum_{t=0}^{m} P^\ve
\int_{
 R_t}
\frac{\left(
(P^{-d} Y)^{m-t}  \mathfrak{H}(L_{t+1}(u))\cdots 
 \mathfrak{H}(L_{m}(u))\right)}{
{\sqrt{\mathfrak{H}(L_1(u))\cdots 
 \mathfrak{H}(L_m(u))
 \mathfrak{H}(u)^{n-m}}}}
 \d u.
 \end{align*}
 Thus
\begin{align*}
 I(X,Y)
&\ll
 \sum_{t=0}^{m}
 I^{(t)}(X,Y),
 \end{align*}
 where
$$
I^{(t)}(X,Y)=
(P^{-d} Y)^{m-t}P^\ve
\int_{
 R_t}
\frac{\left(
  \mathfrak{H}(L_{t+1}(u))\cdots 
 \mathfrak{H}(L_{m}(u))\right)^{\frac{1}{2}}}{
{\mathfrak{H}(u)^{(n-m)/2}}}
 \d u,
$$
on taking 
$\mathfrak{H}(L_1(u))\cdots \mathfrak{H}(L_t(u))\geq 1$.

We first deal with 
$ I^{(0)}(X,Y)$.
Recalling that 
 $\mathfrak{H}(L_i(u))\ll \mathfrak{H}(u)^2$, for $1\leq i\leq m$, it follows that 
$$
\frac{\left(
  \mathfrak{H}(L_{1}(u))\cdots 
 \mathfrak{H}(L_{m}(u))\right)^{\frac{1}{2}}}{ \mathfrak{H}(u)^{(n-m)/2}}
\ll \mathfrak{H}(u)^{3m/2-n/2}.
$$
If $
3m/2-n/2\geq -1$, then 
 \eqref{eq:BoundB} yields
$$
\int_{
 R_0}
 \mathfrak{H}(u)^{3m/2-n/2} \d u \ll 
 P^\ve \left(\frac{P^{d}}{X}\right)^{3m/2-n/2+1}.
 $$
 Alternatively, 
 if  $3m/2-n/2< -1$ then the left hand side is $O(1)$
 by 
  \eqref{eq:BoundC}.
Thus
\begin{align*}
 I^{(0)}(X,Y)
&\ll P^\ve (P^{-d} Y)^{m}
\left( \left(\frac{P^{d}}{X}\right)^{3m/2-n/2+1}+1\right),
\end{align*}
which is satisfactory.

Terms with $1\leq t\leq m-1$ only contribute when $Y\leq P^d$. 
Arguing as above, 
it follows from
 \eqref{eq:BoundC} and \eqref{eq:BoundB}  that 
\begin{align*}
 \sum_{t=1}^{m-1}
 I^{(t)}(X,Y)
&\ll
 \sum_{t=1}^{m-1}
(P^{-d} Y)^{m-t}
\int_{
 R_t}
 \mathfrak{H}(u)^{3m/2-t-n/2}
 \d u\\
&\ll
P^\ve \sum_{t=1}^{m-1}
(P^{-d} Y)^{m-t}
\left( \left(\frac{P^{d}}{X}\right)^{3m/2-t-n/2+1}+1\right)\\
&\ll
P^\ve \sum_{t=1}^{m-1}
\left(
(P^{-d} Y)^{m}
 \left(\frac{P^{d}}{X}\right)^{3m/2-n/2+1}
\left(\frac{X}{Y}\right)^t
+
(P^{-d} Y)^{m-t}\right)\\
&\ll
P^\ve \left\{ 
\frac{P^{d(m/2-n/2+1)}Y^{m-1}}{X^{3m/2-n/2}}+
 \frac{Y}{P^d}
\right\}.
\end{align*}
since $X\leq Y$. This is satisfactory for the lemma.

It remains to 
estimate  $I^{(m)}(X,Y)$. In this case we may assume that  $u$  satisfies the  first inequality in 
\eqref{eq:this}, since $R_m=\emptyset$ otherwise. 
Hence
 \eqref{eq:BoundA} yields
\begin{align*}
I^{(m)}(X,Y)
&=
\int_{\{u \in V : 
P^{d-\ve}/X\ll 
\mathfrak{H}(u)\ll P^{d+\ve}/X\}}
\frac{1}{
{\mathfrak{H}(u)^{(n-m)/2}}}
 \d u
 \ll P^\ve \left(\frac{P^d}{X}\right)^{1-(n-m)/2},
\end{align*}
which is satisfactory and so completes the proof of the lemma.
\end{proof}

It is now time to return to our goal 
of proving that \eqref{eq:end} holds for a suitable $\Delta>0$, for any $X,Y\geq 1$ satisfying \eqref{eq:XY}.
We wish to do so under the assumption that $n-m\geq 4$.
Applying 
Lemma \ref{lem:I-theta} in 
\eqref{eq:resin}, 
the overall contribution
to $E(N;P;X,Y)$
 from the final term
 is seen to be
 \begin{align*}
&\ll   \frac{X^{\frac{3}{2}-(n-m)/2}P^\ve }{
Y^{m/2}}\cdot 
\left(\frac{P^d}{X}\right)^{1-(n-m)/2}\\
&\ll   
 \frac{X^{1/2}}{Y^{m/2}}P^{-d(n-m-2)/2+\ve} \\
 &\ll   
 P^{-d(n-m-2)/2+\ve},
  \end{align*}
on taking $X\leq Y$ and $m\geq 1$. This is $O(P^{-d+\ve})$, 
if $n-m\geq 4$, which  is  satisfactory
for \eqref{eq:end}.
 Next, 
the  second term in Lemma \ref{lem:I-theta} makes the overall contribution
 \begin{align*}
&\ll   \frac{X^{\frac{3}{2}-(n-m)/2}P^\ve }{
Y^{m/2}}\cdot 
 c_Y \left( 
\frac{P^{d(m/2-n/2+1)}Y^{m-1}}{X^{3m/2-n/2}}+
 \frac{Y}{P^d}
\right)\\
&\ll   c_Y P^\ve  \left( 
\frac{Y^{m/2-1}}{X^{m-3/2} P^{d(n-m-2)/2}}+
\frac{X^{3/2-(n-m)/2} }{
Y^{m/2-1}P^d} 
\right).
 \end{align*}
If  $m\geq 2$, we take $Y\leq X^2$ in the first term, and $X,Y\geq 1$ in the second term. 
Assuming that $n-m\geq 4$, this yields
\begin{align*}
&\ll   
 P^{-d(n-m-2)/2+\ve}+
P^{-d+\ve}\ll P^{-d+\ve},
\end{align*}
which is  satisfactory
for \eqref{eq:end}.
If  $m=1$, on the other hand, then we  get the contribution
\begin{align*}
&\ll   c_Y P^\ve  \left( 
\frac{X^{1/2}}{Y^{1/2}P^{d(n-3)/2}}+
\frac{ Y^{1/2}}{X^{(n-4)/2} P^d} 
\right)\ll   P^{-d/2+\ve},
\end{align*}
by \eqref{eq:XY} and the assumption $n\geq m+4=5$, together with the fact that $Y\leq P^d$ when $c_Y\neq 0$.

 Turning to the contribution 
to \eqref{eq:resin}
  from the first term in Lemma \ref{lem:I-theta}, 
 we see that  this is 
\begin{align*}
& \ll 
  \frac{X^{\frac{3}{2}-(n-m)/2}P^\ve }{
Y^{m/2}} \cdot 
 (P^{-d} Y)^{m}
\left( \left(\frac{P^{d}}{X}\right)^{3m/2-n/2+1}+1\right)\\
 &=  \frac{Y^{m/2}P^{d(m-n+2)/2+\ve}}{X^{m-1/2}}
 +
 \frac{X^{\frac{3}{2}-(n-m)/2}Y^{m/2} P^\ve}{
P^{dm} } \\
&\leq  X^{1/2}P^{d(m-n+2)/2+\ve}
+X^{\frac{3}{2}-n/2+3m/2} P^{-dm+\ve}.
\end{align*}
Taking $X\ll P^d$, the first term is 
$O(P^{-d(n-m-3)/2+\ve})$, which is $O(P^{-d/2+\ve})$, if $n-m\geq 4$.
The second term is plainly $P^{-dm+\ve}$ if $\frac{3}{2}-n/2+3m/2\leq 0$, and it is 
$$
\ll P^{d(\frac{3}{2}-n/2+m/2)+\ve}=P^{-d(n-m-3)/2+\ve}
$$
otherwise, on taking $X\ll P^d$. This is $O(P^{-d/2+\ve})$ if 
$n-m\geq 4$.
All of our estimates are satisfactory for \eqref{eq:end}, which  therefore concludes  the proof of Theorem \ref{t:diag}.

\end{document}